\newtheorem{defi}{Definition}[section]
\newtheorem{teo}[defi]{Theorem}
\newtheorem{thm}[defi]{Theorem}
\newtheorem{nota}[defi]{Notation}
\newtheorem{lem}[defi]{Lemma}
\newtheorem{prop}[defi]{Proposition}
\newtheorem{cor}[defi]{Corollary}
\newtheorem{rem}[defi]{Remark}
\newtheorem{es}[defi]{Example}
\begin{document}

\title{Schreier graphs of the Basilica group}

\author{D. D'Angeli, A. Donno, M. Matter and T. Nagnibeda}

\date{\today}

\maketitle

\begin{abstract}
With any self-similar action of a finitely generated group $G$ of
automorphisms of a regular rooted tree $T$ can be naturally
associated an infinite sequence of finite graphs
$\{\Gamma_n\}_{n\geq 1}$, where $\Gamma_n$ is the Schreier graph
of the action of $G$ on the $n$-th level of $T$. Moreover, the
action of $G$ on $\partial T$ gives rise to orbital Schreier
graphs $\Gamma_{\xi}$, $\xi\in \partial T$. Denoting by $\xi_n$
the prefix of length $n$ of the infinite ray $\xi$, the rooted
graph $(\Gamma_{\xi},\xi)$ is then the limit of the sequence of
finite rooted graphs $\{(\Gamma_n,\xi_n)\}_{n\geq 1}$ in the sense
of pointed Gromov-Hausdorff convergence. In this paper, we give a
complete classification (up to isomorphism) of the limit graphs
$(\Gamma_{\xi},\xi)$ associated with the Basilica group acting on
the binary tree, in terms of the infinite binary sequence
$\xi$.\footnote{This research has been supported by the Swiss
National Science Foundation Grant PP0022$_{-}$118946.}
\end{abstract}

\begin{center}{{\bf Mathematics Subject Classification (2010)}
Primary 20E08; Secondary 20F69, 05C63, 37E25.}
\end{center}

\section{Introduction}

Schreier graphs arise naturally from the action of a group on a set. In this paper, we consider groups acting by automorphisms on rooted trees. Let $T$ be a regular rooted tree and $G<Aut(T)$ be a finitely generated group of automorphisms of $T$. By fixing a finite set $S$ of generators of $G$, we get naturally a sequence $\{\Gamma_n\}_{n\geq 1}$ of finite left Schreier graphs of the action of $G$ on $T$. The vertex set of $\Gamma_n$ coincides with the set $L_n$ of vertices of the $n$-th level of $T$, and two vertices $v,v' \in L_n$ are connected by an edge if there exists $s\in S$ such that $s\cdot v= v'$. If $G$ is transitive on each level, then the graphs $\Gamma_n$ are connected.\\
\indent Similarly, the action of $G$ on the boundary $\partial T$ of the tree gives rise to an uncountable family of infinite orbital Schreier graphs $\{\Gamma_{\xi}\}_{\xi\in \partial T}$, with $V(\Gamma_{\xi}) = G\cdot\xi$. It turns out that these orbital Schreier graphs, viewed as rooted graphs $(\Gamma_\xi,\xi)$, are exactly the limits in the pointed Gromov-Hausdorff topology of finite Schreier graphs $(\Gamma_n,\xi_n)$ rooted at $\xi_n$, the prefix of length $n$ of $\xi$ (see Subsection \ref{subsecSchreierGraphs}). Moreover, if we choose the root of $\Gamma_n$ uniformly at random for all $n\geq 1$, then the uniform measure on the set $\{(\Gamma_\xi,\xi);\xi\in\partial T\}$ is the random weak limit (in the sense of \cite{BenSchr01}) of this sequence of random rooted graphs.\\
\indent Particularly interesting examples of such sequences of Schreier graphs come from the class of self-similar, or automata, groups.
In this paper, we examine in detail finite and infinite
Schreier graphs of the Basilica group $B$ acting on the binary tree
in a self-similar fashion. The Basilica group is an
example of a group generated by a finite automaton (see Fig. 2).
It was introduced by R.~Grigorchuk and A.~\.{Z}uk in
\cite{grizuk}, where they show that it does not belong to the
closure of the set of groups of subexponential growth under the
operations of group extension and direct limit. L.~Bartholdi and
B.~Vir\' ag further showed it to be amenable, making Basilica the
first example of an amenable but not subexponentially amenable
group \cite{amenability}.
 This group has also been described by V.~Nekrashevych as the iterated monodromy group of the complex polynomial
 $z^2-1$ (see \cite{nekrashevyc}), and there exists therefore a natural way to associate to it a compact limit space homeomorphic to
 the well-known Basilica fractal (see Fig. 1). Finite Schreier graphs $\{\Gamma_n\}_{n\geq 1}$ form an approximating sequence of this limit space.\\
\indent The aim of this work is to classify (up to isomorphism of unrooted and unlabeled graphs) all limits $\{\Gamma_{\xi}\}_{\xi\in
\partial T}$ of the sequence $\{\Gamma_n\}_{n\geq 1}$ of finite Schreier graphs of the Basilica group. In other words, the question that we answer here is: given an infinite binary sequence $\xi$, describe the infinite Schreier graph $\Gamma_{\xi}$ of the action of $B$ on the orbit of $\xi$. The limit graph $\Gamma_{\xi}$ is shown to have one, two or four ends, the case of one end being generic with respect to the uniform measure on $\partial T$ (this latter fact can also be deduced from general arguments, see \cite{BDDN}). The main result of the paper is an explicit classification of infinite Schreier graphs of the Basilica group in terms of the boundary point $\xi$ (Theorems \ref{totaltheorem}, \ref{4isomorfismo}, \ref{xixibarra}, \ref{2isomorfismo} and \ref{ThmBijection}.) There exist one isomorphism class of $4$-ended graphs, all belonging to the same orbit; uncountably many isomorphism classes of $2$-ended graphs, each consisting of two orbits; and uncountably many isomorphism classes of $1$-ended graphs, almost all of which contain uncountably many orbits. This latter aspect is particularly interesting, as it is not the case in other examples where we were able to perform a similar analysis. For example in the case of the Hanoi towers group $H^{(3)}$, whose finite Schreier graphs form an approximating sequence for the Sierpi\'nski gasket, there are uncountably many isomorphism classes of graphs with one end, but each of them contains at most $6$ different orbits. We intend to pursue this investigation in a future work.\\
\indent It follows (see Section \ref{SubsectionRandomLimit} below) that in the case of the Basilica group there exist uncountably many isomorphism classes of (unlabeled) limit graphs, each of measure $0$. In other words, the random weak limit of the sequence of finite Schreier graphs is a continuous measure with uncountable support. A.~Vershik recently raised the question about existence of continuous ergodic probability measures on the lattice $L(G)$ of subgroups of a given group $G$, invariant under the action of $G$ on $L(G)$ by conjugation \cite{Ver}. For many self-similar groups including the Basilica (more precisely, for all weakly branched groups), the pull-back of the random weak limit of labeled finite Schreier graphs to $L(G)$ gives such a measure, concentrated on stabilizers of points in the boundary of the tree.\\
\indent It would be interesting to perform a similar analysis systematically on sequences of finite Schreier graphs associated with self-similar groups, in particular with iterated monodromy groups of quadratic polynomials, whose infinite Schreier graphs are closely related to the Julia set of the polynomial (see the recent work \cite{BarDud} for a detailed description of this relation.) In particular, the inflation recursive process of constructing finite Schreier graphs introduced in \cite{nekrashevyc} (and used in \cite{bondarenko} to study growth of infinite Schreier graphs) might also give a possible general approach to classification of infinite Schreier graphs of contracting groups of automorphisms of rooted trees, in the spirit of the present work. For example, though there are examples of groups (the first Grigorchuk group of intermediate growth, the adding machine...) with only one or two isomorphism classes of infinite (unlabeled) Schreier graphs, it seems that those are exceptional cases and that in general the support of the random weak limit of $\Gamma_n$'s is uncountable. We plan to address this question in a future work.\\
\indent Sequences of Schreier graphs of self-similar groups have been mainly studied in the literature from the viewpoint of spectral computations (see e.g. \cite{hecketype},\cite{GrigSunik06},\cite{rogers}...) However, in contrast with other well-known examples of self-similar groups, the spectral measure of the Laplace operator on infinite Schreier graphs of the Basilica group is not known. Explicit description of these graphs that we obtain in this paper can be useful in this aspect.\\

\vspace{0.5cm}

\begin{center}
\includegraphics[angle=90, width=0.4\textwidth]{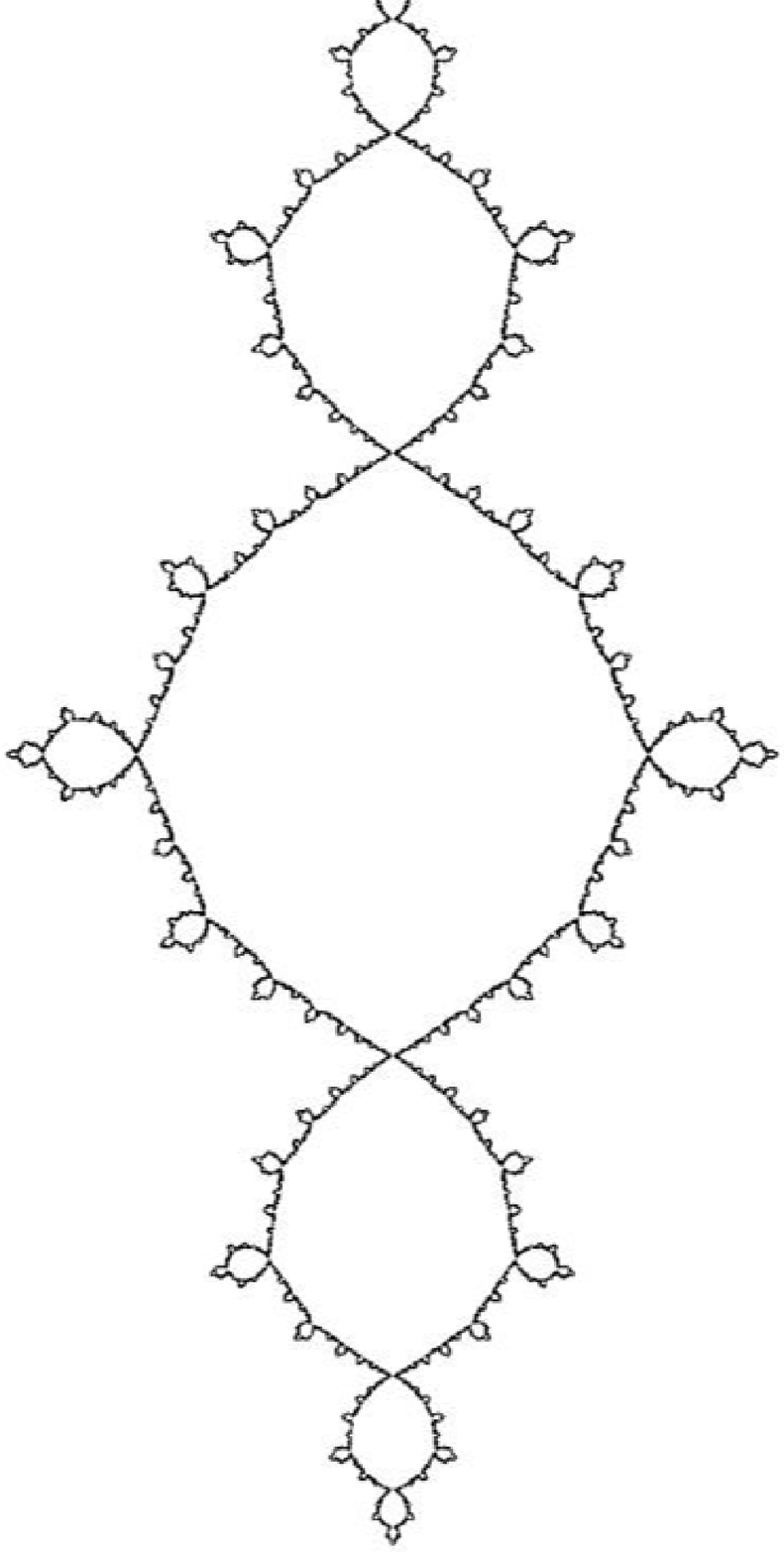}
\end{center}
\begin{center}
\textbf{Fig. 1.} the Julia set $J(z^2-1)$.
\end{center}

Our initial motivation comes from  the work \cite{MatNag09} that aims  at constructing new examples of asymptotic behaviour for the Abelian Sandpile Model (ASM). The infinite Schreier graphs of the Basilica group provide uncountable families of examples, on one hand, of $2$-ended graphs non quasi-isometric to $\mathbb{Z}$, on which the ASM is not critical and, on the other hand, of $1$-ended graphs of quadratic growth, on which the ASM is critical.\\

\section{Preliminaries}

\subsection{Groups acting on rooted trees}\label{groupsactingontrees}

\indent Let $T$ be the regular rooted tree of degree $q$, i.e., the rooted
tree in which each vertex has $q$ children. Given a finite alphabet $X = \{0,1, \ldots, q-1\}$ of $q$
elements, let us denote by $X^n$ the set of words of length $n$ in
the alphabet $X$ and put $X^{\ast}=\bigcup_{n\geq 0}X^n$, where
the set $X^0$ consists of the empty word. Moreover, we denote by
$X^{\omega}$ the set of infinite words in $X$. In this way, each
vertex of the $n$-th level of the tree can be regarded as an element of $X^n$ and the set
$X^{\omega}$ can be identified with the set $\partial T$ of
infinite geodesic rays starting at the root of $T$. The set
$X^{\omega}$ can be equipped with the direct product topology. The
basis of open sets is the collection of all cylindrical sets $\{\textrm{$wX^\omega;$ $w\in X^{\ast}$}\}$.
The space $X^{\omega}$ is totally disconnected and homeomorphic to
the Cantor set. The cylindrical sets generate a $\sigma$-algebra of Borel subsets of the space $X^{\omega}$. We shall denote by $\lambda$ the uniform measure on $X^\omega$.\\
\indent We denote by $Aut(T)$ the group of all automorphisms of $T$, i.e.,
the group of all bijections of the set of vertices of $T$ preserving the incidence relation. Clearly, the root and hence the levels of the tree are preserved by any automorphism of $T$. A group $G\leq Aut(T)$ is said to be \textbf{spherically transitive} if it acts transitively on each level of the tree.\\
\indent The \textbf{stabilizer} of a vertex $x\in T$ is the
subgroup of $G$ defined as
$Stab_G(x)=\{g\in G : g(x)=x\}$; the \textbf{stabilizer of the} $n${\bf -th level} $L_n$ of the tree is
$Stab_G(L_n)=\bigcap_{x\in L_n}Stab_G(x)$; finally,
the \textbf{stabilizer of a boundary point} $\xi\in X^{\omega}$ is $Stab_G(\xi)=\{g\in G : g(\xi)=\xi\}$. The following properties hold.
\begin{itemize}
\item For all $n \geq 1$, $Stab_G(L_n)$ is a normal subgroup of $G$ of
finite index.
\item The subgroups $Stab_G(x)$, for $x \in L_n$, are
all of index $q^n$. Moreover, if the action of $G$ on $T$ is spherically
transitive, they are all conjugate.
\item $\bigcap_{\xi \in \partial T}Stab_G(\xi)$ is trivial.
\item Denote by $\xi_n$ the prefix of $\xi$ of length $n$. Then
$Stab_G(\xi) = \bigcap_{n\in \mathbb{N}} Stab_G(\xi_n)$.
\item $Stab_G(\xi)$ has infinite index in $G$ and $Stab_G(\xi)/(Stab_G(\xi)\cap Stab_G(L_n)) \cong Stab_G(\xi_n)/Stab_G(L_n)$.
\end{itemize}

If $g\in Aut(T)$ and $v\in X^*$, define $g|_v\in Aut(T)$, called  \textbf{the restriction of the action of} $g$ \textbf{to the subtree rooted at} $v$, by
$g(vw)=g(v)g|_v(w)$ for all $v,w\in X^*$. Every subtree of $T$ rooted at a vertex is isomorphic to $T$. Therefore,
every automorphism $g\in Aut(T)$ induces a permutation of the vertices of the first level of the tree and $q$ restrictions, $g|_0,...,g|_{q-1},$ to the subtrees rooted at the vertices of the first level. It can be written as $g=\tau_g(g|_0,\ldots,g|_{q-1})$,
where $\tau_g\in S_q$ describes the action of $g$ on $L_1$.
In fact, $Aut(T)$ is isomorphic to the wreath product $S_q\wr Aut(T)$ where $S_q$ denotes the symmetric group on $q$ letters, and thus $Aut(T)\cong \wr_{i=1}^{\infty} S_q$.

\begin{defi}{\rm \cite{nekrashevyc}}
A group $G$ acting by automorphisms on a $q$-regular rooted tree $T$ is
\textbf{self-similar} if $g|_v\in G$, $\forall v\in X^*, \forall g\in G$.
\end{defi}

A self-similar group $G$ can be embedded into the wreath
product $S_q\wr G$. Consequently, an automorphism
$g\in G$ can be represented as
$g=\tau_g(g|_0,\ldots,g|_{q-1})$,
where $\tau_g\in S_q$ describes the action of $g$ on $L_1$, and
$g|_i\in G$ is the restriction of the action of $g$ on the subtree
$T_i$ rooted at the $i$-th vertex of the first level. So, if $x\in X$ and $w$ is a finite word in $X$, we have
$g(xw)=\tau_g(x)g|_x(w)$.

\begin{defi}\emph{\cite{nekrashevyc}}\label{selfreplicating} A self-similar group
$G$ is \textbf{self-replicating} (or fractal) if it acts transitively on the first level of the tree and, for all $x\in X$,
the map $g\mapsto g|_x$ from $Stab_G(x)$ to $G$ is surjective.
\end{defi}

\subsection{Schreier graphs} \label{subsecSchreierGraphs}

\indent Consider a finitely generated group $G$ with a set $S$ of generators such that $id \not \in S$ and $S = S^{-1}$, and suppose that $G$ acts on a set $M$. Then, one can consider a graph $\Gamma(G,S,M)$ with the set of vertices $M$, and two vertices $m,m'$ joined by an edge (labeled by $s$) if there exists $s\in S$ such that $s(m)=m'$. Clearly, if the action of $G$ on $M$ is transitive, then $\Gamma(G,S,M)$ is the \textbf{Schreier graph} $\Gamma(G,S,Stab_G(m))$ of the group $G$ with respect to the subgroup $Stab_G(m)$ for some (any) $m\in M$. If the action of $G$ on $M$ is not transitive, and $m\in M$, then we denote by $\Gamma(G,S,m)$ the Schreier graph of the action on the $G$-orbit of $m$, and we call such a graph \textbf{an orbital
Schreier graph}.\\
\indent Suppose now that $G$ acts spherically transitively on a rooted tree $T$. Then, the \textbf{$n$-th Schreier graph} of $G$ is by definition $\Gamma(G,S,X^n)=\Gamma(G,S,P_n)$ where $P_n$ denotes the subgroup stabilizing some word $w\in X^n$. For each $n\geq 1$, let $\pi_{n+1}:\Gamma(G,S,X^{n+1})\longrightarrow \Gamma(G,S,X^n)$
be the map defined on the vertex set of $\Gamma(G,S,X^{n+1})$ by $\pi_{n+1}(x_1\ldots x_nx_{n+1}) = x_1\ldots x_n$. Since $P_{n+1}\leq P_n$, $\pi_{n+1}$ induces a surjective morphism between $\Gamma(G,S,X^{n+1})$ and
$\Gamma(G,S,X^n)$. This morphism is a graph covering of degree $q$.\\
\indent We also consider the action of $G$ on $\partial T\equiv X^\omega$ and the orbital Schreier graph $\Gamma(G,S,G\cdot\xi)=\Gamma(G,S,P_\xi)$ where $P_\xi$ denotes the stabilizer of $\xi$ for the action of $G$ on $X^\omega$. Recall that, given a ray $\xi$, we denote by $\xi_n$ the prefix of $\xi$ of length $n$, and that $P_{\xi} = \cap_n P_n$. It follows that the infinite Schreier graph $\Gamma_{\xi}=\Gamma(G,S,P_{\xi})$ can be approximated (as a rooted graph) by finite Schreier graphs $\Gamma_n=\Gamma(G,S,P_n)$, as $n\rightarrow\infty$, in the compact space of rooted graphs of uniformly bounded degree endowed with \textbf{pointed Gromov-Hausdorff convergence} (\cite{Grom}, Chapter 3), provided, for example, by the following metric:
 given two rooted graphs $(\Gamma_1,v_1)$ and $(\Gamma_2,v_2)$,
$$Dist((\Gamma_1,v_1),(\Gamma_2,v_2)):=\inf\left\{\frac{1}{r+1};\textrm{$B_{\Gamma_1}(v_1,r)$
is isomorphic to $B_{\Gamma_2}(v_2,r)$}\right\}$$
where $B_{\Gamma}(v,r)$ is the ball of radius $r$ in $\Gamma$ centered in $v$.\\

\subsection{Self-similar groups and automata}

\indent An \textbf{automaton} is a quadruple $\mathcal{A} =
(\mathcal{S},X,\mu,\nu)$, where $\mathcal{S}$ is the set of states; $X$ is an alphabet;
$\mu: \mathcal{S}\times X \rightarrow \mathcal{S}$ is the transition map;
and $\nu: \mathcal{S}\times X \rightarrow X$ is the output map.
The automaton $\mathcal{A}$ is \textbf{finite} if $\mathcal{S}$ is
finite and it is \textbf{invertible} if, for all $s\in
\mathcal{S}$, the transformation $\nu(s, \cdot):X\rightarrow
X$ is a permutation of $X$. An automaton $\mathcal{A}$ can be
represented by its \textbf{Moore diagram}. This is a directed
labeled graph whose vertices are identified with the states of
$\mathcal{A}$. For every state $s\in \mathcal{S} $ and every
letter $x\in X$, the diagram has an arrow from $s$ to $\mu(s,x)$
labeled by $x|\nu(s,x)$. A natural action on the words over
$X$ is induced, so that the maps $\mu$ and $\nu$ can be
extended to $\mathcal{S}\times X^{\ast}$:
$$\mu(s,xw) = \mu(\mu(s,x),w),$$
\begin{eqnarray}\label{actionextended}
\nu(s,xw) = \nu(s,x)\nu(\mu(s,x),w),
\end{eqnarray}
where we set $\mu(s,\emptyset) = s$ and $\nu(s,\emptyset) =
\emptyset$, for all $s\in \mathcal{S}, x\in X$ and $w\in
X^{\ast}$. Moreover, (\ref{actionextended}) defines uniquely a map
$\nu: \mathcal{S} \times X^{\omega}\rightarrow X^{\omega}$.\\
\indent If we fix an initial state $s$ in an automaton $\mathcal{A}$, then
the transformation $\nu(s,\cdot)$ on the set $X^{\ast}\cup
X^{\omega}$ is defined by (\ref{actionextended}); it is denoted by
$\mathcal{A}_s$. The image of a word $x_1x_2\ldots$ under
$\mathcal{A}_s$ can be easily found using the Moore diagram (see, for instance, Fig. 2 below).
Consider the directed path starting at the state $s$
with consecutive labels $x_1|y_1$, $x_2|y_2,...$; the image
of the word $x_1x_2\ldots$ under the transformation
$\mathcal{A}_s$ is then $y_1y_2\ldots$. More generally,
given an invertible automaton $\mathcal{A} =
(\mathcal{S},X,\mu,\nu)$, one can consider the group generated
by the transformations $\mathcal{A}_s$, for $s\in \mathcal{S}$;
this group is called the \textbf{automaton group} generated by
$\mathcal{A}$ and is denoted by $G(\mathcal{A})$.\\
\indent A basic theorem \cite{nekrashevyc} states that the action of a group $G$ on $X^{\ast}\cup
X^{\omega}$ is self-similar if and only if $G$ is generated by an invertible
automaton.\\
\indent Let $\mathcal{A}$ be a finite automaton with the set of states $\mathcal{S}$ and alphabet $X$ and let us denote $\alpha(k,s)$,
for $k\in \mathbb{N}$ and $s\in \mathcal{S}$, the number of words $w\in X^k$ such that $s|_w \neq id$.
Sidki suggested to call $\mathcal{A}$ \textbf{bounded}, if the sequence $\alpha(k,s)$ is bounded as a function of $k$ for each state $s\in \mathcal{S}$.
He showed in \cite{sidki} that a finite invertible automaton is bounded if and only if any two non-trivial cycles in the Moore diagram of the automaton are disjoint and not connected by a directed path.\\
\indent It can be shown moreover, that any group generated by a bounded automaton is \textbf{contracting} \cite{postcritically}, which means, for a self-similar group,
 the existence of a finite set $\mathcal{N}\subset G$ such that for every $g\in G$ there exists $k\in \mathbb{N}$ such that $g|_v\in \mathcal{N}$, for all words $v$ of length greater or equal to $k$.

\subsection{The Basilica group}\label{beginningBasilica}

\indent The Basilica group $B$ was introduced by R. Grigorchuk and A. \.{Z}uk \cite{grizuk} as the group generated by the following three-state automaton.


\begin{center}
\begin{picture}(300,145)
\thicklines \setvertexdiam{20} \setprofcurve{15}\setloopdiam{20}
\letstate A=(75,140)
\letstate C=(225,90)
\letstate B=(75,40)

\drawstate(A){$a$} \drawstate(B){$b$} \drawstate(C){$id$}

\drawcurvededge(A,C){$1|1$}
\setprofcurve{-20}\drawcurvededge[r](B,C){$1|0$}
\drawloop[r](C){$0|0,1|1$} \drawcurvededge[r](A,B){$0|0$}
\drawcurvededge[r](B,A){$0|1$} \put(30,0){\textbf{Fig. 2.} The
automaton generating the Basilica group.}
\end{picture}
\end{center}

\noindent It can be read from the automaton that the Basilica group is an automorphism group of the rooted binary tree generated by two automorphisms $a$ and $b$ of the following self-similar structure:
$$
a=e(b,id) \ \ \ \ \ \ \ b=\varepsilon (a,id),
$$
where $id$ denotes the trivial automorphism of the tree, while $e$ and $\varepsilon$ are respectively the identity and the nontrivial permutation
in $S_2$. One can verify directly that the stabilizer of the first level is the subgroup
$Stab_B(L_1)= <a, a^b,b^2>,$
with $a^b = b^{-1}ab = e(id,b^a)$ and $b^2 = e(a,a)$. This implies in particular, that $B$ is self-replicating. Moreover, since the action of $B$ on the first level of the tree is transitive, its action is also spherically transitive.
It follows from the results cited in the previous subsection that the Basilica group is a contracting self-similar group generated by a
bounded automaton.\\
\indent We end this section by an observation about the action of the Basilica group on the boundary of the binary tree that we will need in Section 4 below.

\begin{defi}
Two right-infinite sequences $x_1x_2\ldots$, $y_1y_2\ldots \in X^{\omega}$ are called \textbf{cofinal} (denoted $x_1x_2\ldots\sim y_1y_2\ldots$) if they differ only in finitely many letters. cofinality is an equivalence relation. The respective equivalence classes are called the cofinality classes and they are denoted by $Cof(\cdot)$.
\end{defi}

\begin{prop}\label{cofinality}
The union of the cofinality classes $Cof(0^{\omega})\cup Cof((01)^{\omega})\cup Cof((10)^{\omega})$ constitutes one orbit of the action of the Basilica group on $X^{\omega}$. Any other cofinality class is exactly one orbit.
\end{prop}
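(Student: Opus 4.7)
The plan is to prove the proposition via two structural facts about the $B$-action on $X^\omega$. \textbf{Fact 1 (cofinality class $\subseteq$ orbit):} every cofinality class lies in a single $B$-orbit. Indeed, if $\xi=u\tau$ and $\eta=v\tau$ are cofinal with $|u|=|v|=n$, spherical transitivity furnishes $g_0\in B$ with $g_0(u)=v$, so $g_0(\xi)=v\cdot g_0|_u(\tau)$. The self-replicating property (Definition~\ref{selfreplicating}), iterated over the letters of $u$, produces $k\in Stab_B(u)$ with $k|_u=g_0|_u$; the element $g:=g_0 k^{-1}$ then satisfies $g(u)=v$ and $g|_u=id$, hence $g(\xi)=\eta$.

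\textbf{Fact 2 (orbit of non-critical $\xi$ $\subseteq$ cofinality class):} if $\xi$ is not cofinal to any of $0^\omega$, $(01)^\omega$, $(10)^\omega$, then for every $g\in B$ the section $g|_{\xi_n}$ is eventually $id$, whence $g(\xi)\sim\xi$. Since $B$ is contracting, $g|_{\xi_n}$ belongs to a fixed finite nucleus $\mathcal{N}$ once $n$ is large enough. The wreath recursions $a=e(b,id)$, $b=\varepsilon(a,id)$, $a^{-1}=e(b^{-1},id)$, $b^{-1}=\varepsilon(id,a^{-1})$ (and the analogous recursions for the few further elements forced into $\mathcal{N}$, such as $ab^{-1}$ and $ba^{-1}$) allow us to draw the Moore diagram on $\mathcal{N}$ and to enumerate its closed non-trivial paths. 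A direct case analysis shows that the only rays $\eta$ on which some non-identity $h\in\mathcal{N}$ keeps $h|_{\eta_m}\neq id$ for every $m$ are $\eta=0^\omega$ (for $h=a$ or $b$), $\eta=(01)^\omega$ (for $h=a^{-1}$), and $\eta=(10)^\omega$ (for $h=b^{-1}$). For a non-critical $\xi$ the chain $g|_{\xi_n}$ is thus forced to reach $id$ at some level, giving $g(\xi)\sim\xi$.

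Combining the two facts with the direct computations $a(0^\omega)=(01)^\omega$ and $b(0^\omega)=(10)^\omega$ (immediate from the recursions for $a$ and $b$) finishes the proof: the three critical cofinality classes are linked into a single orbit by $a$ and $b$, and by Fact~1 the full union $Cof(0^\omega)\cup Cof((01)^\omega)\cup Cof((10)^\omega)$ is precisely this orbit, while Facts~1 and~2 together imply that every other cofinality class coincides with its $B$-orbit. The principal obstacle is Fact~2, which rests on an explicit description of the nucleus of $B$ together with a finite combinatorial analysis of the non-trivial cycles in its Moore diagram; the fact that exactly the three periodic rays $0^\omega$, $(01)^\omega$, $(10)^\omega$ arise as critical rays of nucleus elements is precisely what forces the single exceptional orbit in the statement.
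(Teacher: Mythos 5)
Your proof is correct, and its overall skeleton coincides with the paper's: the inclusion of a whole cofinality class in a single orbit (your Fact 1) is obtained exactly as in the paper, by spherical transitivity to move the prefix and then the self-replicating property, iterated along the prefix, to trivialize the induced section; and the three exceptional classes are fused into one orbit by the same computations $a(0^{\omega})=(01)^{\omega}$, $b(0^{\omega})=(10)^{\omega}$. Where you genuinely diverge is in the reverse inclusion (your Fact 2). The paper argues only about the four generators $a^{\pm1},b^{\pm1}$: it observes that $0^{\omega}$, $(01)^{\omega}$, $(10)^{\omega}$ are the only rays mapped to non-cofinal rays by some generator, and then deduces (implicitly by induction on word length, using that the union of the three exceptional classes is saturated under cofinality) that an arbitrary $g\in B$ maps any non-critical ray to a cofinal one. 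You instead invoke the contracting property and the explicit nucleus $\mathcal{N}=\{id,a^{\pm1},b^{\pm1},ab^{-1},ba^{-1}\}$, and classify the infinite non-trivial paths in its Moore diagram. Both are ``follow the active section through a finite automaton'' arguments; yours is slightly heavier in that it requires identifying the nucleus and checking it is section-closed, which you only assert --- though the verification is routine: $a|_0=b$, $b|_0=a$, $a^{-1}|_0=b^{-1}$, $b^{-1}|_1=a^{-1}$, $(ab^{-1})|_1=ba^{-1}$, $(ba^{-1})|_0=ab^{-1}$, all remaining sections trivial, and indeed the only surviving infinite reads are $0^{\omega}$, $(01)^{\omega}$ and $(10)^{\omega}$. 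In exchange you dispense with the induction on word length and make the phenomenon more conceptual: the critical rays are exactly those read along the non-trivial cycles of the nucleus automaton, so your argument transfers verbatim to any contracting group whose nucleus is known, whereas the paper's generator-only version is the more elementary route for this specific group.
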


\begin{proof}
One directly verifies that $a(0^{\omega}) = (01)^{\omega}$, $a^{-1}(0^{\omega}) = 010^{\omega}$, $b(0^{\omega}) =
(10)^{\omega}$ and $b^{-1}(0^{\omega}) = 10^{\omega}$. Moreover, it follows from the definition of the generators $a$ and $b$, that $0^\omega$, $(01)^\omega$ and $(10)^\omega$ are the only infinite words which are mapped onto infinite words not cofinal to them by some of the generators ($a,b$ for $0^\omega$, $a^{-1}$ for $(01)^\omega$ and $b^{-1}$ for $(10)^\omega$). Thus, the orbit $B\cdot0^\omega$ is contained in the union $Cof(0^{\omega})\cup Cof((01)^{\omega})\cup Cof((10)^{\omega})$. On the other hand, we show that any word $\eta\in Cof(0^{\omega})\cup Cof((01)^{\omega})\cup Cof((10)^{\omega})$ belongs to the orbit of $0^{\omega}$ by providing an automorphism in $B$ mapping $0^{\omega}$ to $\eta$. We only discuss the case of words of the type $\eta=w0^{\omega}$ (the other cases are analogous). Suppose that $|w|=k$. By transitivity, there exists $g \in B$ such that $g(0^k)=w$; set $g':=g|_{0^k}$. Since $B$ is self-replicating, there exists $h\in Stab_{B}(w)$ such that $h|_w=(g')^{-1}$. This gives $hg(0^{\omega})=w0^{\omega}$.\\
\indent  Since $0^\omega$, $(01)^\omega$ and $(10)^\omega$ are the only infinite words which are mapped onto infinite words not cofinal to them by some of the generators, the orbit of any $\xi\in X^\omega\backslash (Cof(0^\omega)\cup Cof((01)^\omega)\cup Cof((10)^\omega))$ is contained in one cofinality class. The other inclusion is proven by the same argument as used to show that any word $\eta\in Cof(0^{\omega})\cup Cof((01)^{\omega})\cup Cof((10)^{\omega})$ belongs to the orbit of $0^{\omega}$.
\end{proof}

\section{Schreier graphs of the Basilica group}\label{sectionSchreier}

\subsection{The structure of finite Schreier graphs}\label{finiteSch}

\indent For each $n\geq 1$, let us denote by $\Gamma_n\equiv
\Gamma(B,\{a,b\},\{0,1\}^n)$ the $n$-th Schreier graph of the action of
the Basilica group. Recall that the edges of $\Gamma_n$ are
labeled by the generators $a$,$b$ of the group $B$ and that its
vertices are encoded by words of length $n$ in the alphabet $\{0,1\}$.
We begin this subsection by providing some convenient substitutional rules which
allow to construct $\Gamma_n$'s recursively.

\begin{prop}\label{rulesbas}
The Schreier graph $\Gamma_{n+1}$ is obtained from $\Gamma_n$ by
applying to all subgraphs of $\Gamma_n$ given by single edges the
following substitutional rules \textbf{SR}:


\begin{center}
\begin{picture}(400,125)

\put(112,117){SR1}\put(192,117){SR2}\put(292,117){SR3}

\letvertex A=(120,100)\letvertex B=(100,20)\letvertex C=(140,20)

\letvertex D=(180,100)\letvertex E=(220,100)\letvertex F=(180,20)\letvertex G=(220,20)
\letvertex H=(280,100)\letvertex I=(320,100)\letvertex L=(260,10)\letvertex M=(300,20)\letvertex N=(340,10)
\put(117,60){$\Downarrow$}\put(197,60){$\Downarrow$}\put(297,60){$\Downarrow$}

\put(117,92){$1w$}\put(97,11){$11w$}\put(137,11){$01w$}

\put(177,92){$u$}\put(217,92){$v$}\put(177,11){$0u$}\put(217,11){$0v$}

\put(277,92){$0u$}\put(317,92){$0v$}
\put(257,1){$00u$}\put(296,10){$10v$}\put(337,1){$00v$}


\drawvertex(A){$\bullet$}\drawvertex(B){$\bullet$}
\drawvertex(C){$\bullet$}\drawvertex(D){$\bullet$}
\drawvertex(E){$\bullet$}\drawvertex(F){$\bullet$}
\drawvertex(G){$\bullet$}\drawvertex(H){$\bullet$}
\drawvertex(I){$\bullet$}\drawvertex(L){$\bullet$}
\drawvertex(M){$\bullet$}\drawvertex(N){$\bullet$}

\drawundirectedloop(A){$a$}\drawundirectedloop[l](B){$a$}
\drawundirectedcurvededge(B,C){$b$}\drawundirectedcurvededge(C,B){$b$}

\drawundirectededge(D,E){$b$} \drawundirectededge(F,G){$a$}
\drawundirectededge(H,I){$a$}
\drawundirectedcurvededge(L,M){$b$}\drawundirectedloop(M){$a$}
\drawundirectedcurvededge(M,N){$b$}
\end{picture}
\end{center}
with
\begin{center}
\begin{picture}(200,40)
\letvertex A=(70,25)\letvertex B=(130,25)

\put(67,16){$0$}\put(127,16){$1$}\put(20,21){$\Gamma_1 =$}

\drawvertex(A){$\bullet$}\drawvertex(B){$\bullet$}

\drawundirectedloop[l](A){$a$}\drawundirectedloop[r](B){$a$}
\drawundirectedcurvededge(A,B){$b$}\drawundirectedcurvededge(B,A){$b$}
\end{picture}
\end{center}

\end{prop}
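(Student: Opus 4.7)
I will argue by induction on $n$. The base case $n=1$ follows by direct computation from the self-similar description $a = e(b,id)$ and $b = \varepsilon(a,id)$, which gives $a(0) = 0$, $a(1) = 1$, $b(0) = 1$ and $b(1) = 0$; this produces exactly the two $a$-loops and the double $b$-edge exhibited in the statement.

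For the inductive step, the main tool is the recursive action on a word $xw \in X^{n+1}$:
\begin{align*}
a(0w) &= 0\, b(w), & a(1w) &= 1w, \\
b(0w) &= 1\, a(w), & b(1w) &= 0w,
\end{align*}
together with the observations that $b$ has no fixed points on $X^m$ and that $a$ fixes exactly the words of $X^m$ beginning with $1$. From these formulas one reads off the full edge list of $\Gamma_{n+1}$: an $a$-loop at each vertex $1z$, $z \in X^n$; an $a$-edge $\{0u, 0b(u)\}$ for each $u \in X^n$; and, for each $u \in X^n$, a $b$-edge $\{1u, 0u\}$ and a $b$-edge $\{0u, 1a(u)\}$, the two $b$-edges forming a double edge exactly when $u$ begins with $1$ (i.e., when $a(u)=u$).

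It remains to match these edges to the outputs of SR1, SR2 and SR3 applied to the edges of $\Gamma_n$. Rule SR2 applied to a $b$-edge $u \sim v$ in $\Gamma_n$ gives the $a$-edge $\{0u, 0v\}$, since $a(0u) = 0b(u) = 0v$. Rule SR1 applied to an $a$-loop at $1w \in \Gamma_n$ (with $w \in X^{n-1}$) produces the $a$-loop at $11w$ together with a double $b$-edge between $11w$ and $01w$, because $a(11w) = 11w$, $b(11w) = 01w$ and $b(01w) = 1 \cdot a(1w) = 11w$. Rule SR3 applied to an $a$-edge $0u \sim 0v$ in $\Gamma_n$, where $v = b(u)$ and necessarily both endpoints start with $0$, produces the configuration with vertices $00u, 10v, 00v$, $b$-edges $\{00u, 10v\}$ and $\{10v, 00v\}$, and an $a$-loop at $10v$, via $b(00u) = 1 \cdot a(0u) = 10b(u) = 10v$, $a(10v) = 10v$ and $b(10v) = 00v$. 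The main obstacle is the final bookkeeping step confirming that the outputs of SR1, SR2, SR3 produce every edge in the above list of edges of $\Gamma_{n+1}$ exactly once; the key to this is the pairwise disjointness of the three rule-domains in $\Gamma_n$ (the $a$-loops, the $b$-edges, and the non-loop $a$-edges, whose endpoints necessarily both start with $0$), which follows directly from the fixed-point properties of $a$ and $b$ recalled above.
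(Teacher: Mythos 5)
Your proof is correct and follows essentially the same route as the paper's: both read off the edge types of $\Gamma_{n+1}$ from the wreath recursion $a(0w)=0b(w)$, $a(1w)=1w$, $b(0w)=1a(w)$, $b(1w)=0w$ and match them bijectively to the $a$-loops, $b$-edges and non-loop $a$-edges of $\Gamma_n$ via SR1, SR2, SR3. The only cosmetic difference is that you frame the argument as an induction whose hypothesis is never actually used — the verification is a direct computation for each $n$, exactly as in the paper.
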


\begin{proof}
 By definition of $a$,
we can distinguish two types of $a$-edges: a first type joining
vertices $0w$ and $0b(w)$, and a second type which are loops at
vertices $1w$, with $w\in \{0,1\}^n$. Similarly, we distinguish
the following types of $b$-edges:
\begin{itemize}
\item $b(00w) = 10b(w)$ (note that $w\neq b(w)$);
\item $b(01w) = 11w$;
\item $b(10w) = 00w$;
\item $b(11w) = 01w$.
\end{itemize}
All $b$-edges can be partitioned in pairs.
Pairs of $b$-edges of second and fourth type connecting vertices $11w$ and $01w$ in
$\Gamma_{n+1}$ arise from $a$-loops at the vertex $1w$ in
$\Gamma_n$; moreover, there must be an $a$-loop based at $11w$ by
definition of $a$, hence SR1.\\
\indent The $b$-edges of first and third type are paired in chains of length $2$. Such a chain connecting $00w$ to $10b(w)$ to $00b(w)$ in
$\Gamma_{n+1}$ arises from the $a$-edge joining vertices $0w$ and
$0b(w)$; moreover, there must be an $a$-loop based at $10b(w)$, hence SR3.\\
\indent Finally, $a$-edges between vertices $0u$ and $0v$ in
$\Gamma_{n+1}$ are
in bijection with $b$-edges joining $u$ and $v$ in $\Gamma_n$, hence SR2.\\
\end{proof}

\indent Below follow the pictures of the Schreier graphs $\Gamma_n$ for some first values of $n\geq 1$.

\input{Figures/Examples}

\indent Given $k\in\mathbb{N}$, recall that a
graph $G$ is $k$-connected if for every proper subset
$Y\subset V(G)$ with $|Y|<k$, $G\setminus Y$ is connected. A
connected graph $G$ is \textbf{separable} if it can be
disconnected by removing only one vertex. Such a vertex is called
a \textbf{cut} vertex. The biggest $2$-connected components of a
separable graph are called \textbf{blocks}.\\
\indent The following is an easy consequence of the substitutional rules (Proposition \ref{rulesbas}):

\begin{prop} \label{BasicProp}
For every $n\geq 1$, $\Gamma_n$ is a $4$-regular separable graph whose blocks are cycles. Every vertex without a loop in $\Gamma_n$ is a cut vertex. Moreover, removing any cut vertex disconnects $\Gamma_n$ into exactly two components. Finally, the maximal length of a cycle in $\Gamma_n$ is $2^{\lceil\frac{n}{2}\rceil}$.
\end{prop}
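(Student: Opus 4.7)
The plan is to argue by induction on $n$, based on the substitutional rules SR1, SR2, SR3 of Proposition~\ref{rulesbas}, with the base cases $n=1,2$ checked by direct inspection of the pictures.

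To set up the inductive argument, I will work with the spanning subgraphs $\Gamma_n^a$ and $\Gamma_n^b$ consisting respectively of the $a$-edges and $b$-edges of $\Gamma_n$. Each is $2$-regular (as the Schreier graph of a single permutation), so its connected components are simple cycles in the multigraph sense, which I will call $a$-cycles and $b$-cycles. Note that $b$ has no fixed point on any $L_n$ (immediate by induction: $b(0w)=1a(w)$ starts with $1$ while $b(1w)=0w$ starts with $0$), so no vertex carries a $b$-loop and every $b$-cycle has length at least $2$. The $4$-regularity of $\Gamma_n$ is then automatic; it can equivalently be verified from SR1--SR3 by a case-split on the first two letters of a vertex of $\Gamma_{n+1}$.

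The heart of the proof is the following cactus claim: in $\Gamma_n$, any $a$-cycle and any $b$-cycle share at most one vertex. Granting it, distinct $a$-cycles are vertex-disjoint (being orbits of $a$), and similarly for $b$-cycles, so any two cycles in $\Gamma_n$ meet in at most one vertex; hence each cycle is a block and every block is a cycle. I will prove the claim inductively, classifying cycles of $\Gamma_{n+1}$ by their origin under SR1--SR3: an $a$-cycle is either on $0$-prefixed vertices (SR2, from a $b$-cycle of $\Gamma_n$) or an $a$-loop at a $1$-prefixed vertex (from SR1 at prefix $11$, from SR3 at prefix $10$); a $b$-cycle is either a $2$-cycle between $01w$ and $11w$ (SR1, from an $a$-loop at $1w$) or a cycle of length $2k$ alternating $00$- and $10$-prefixed vertices (SR3, from an $a$-cycle of length $k\geq 2$ in $\Gamma_n$). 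In the substantive case where the two cycles come from SR2 and SR3 respectively, their intersection consists of $00$-prefixed vertices and, unraveling the rules, is in natural bijection with the intersection of the underlying $b$-cycle and $a$-cycle of $\Gamma_n$, hence of size at most one by the inductive hypothesis. All remaining combinations are immediate since $01$-, $10$-, and $11$-prefixed vertices each lie in at most one cycle of each type in $\Gamma_{n+1}$.

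Once the cactus structure is established, the cut-vertex statements follow by a local counting argument. Each vertex of $\Gamma_n$ has $4$ half-edges, and a non-loop cycle-block through it contributes exactly $2$ of them. Since no vertex has a $b$-loop and at most one $a$-loop, a non-loop vertex lies in exactly $2$ non-loop cycle-blocks, so removing it splits each into a path and yields exactly two connected components; a loop vertex lies in a single non-loop block and is not a cut vertex. For the bound on the maximal cycle length, let $M_n^a$ and $M_n^b$ denote the maximum $a$- and $b$-cycle lengths in $\Gamma_n$. The rules give $M_{n+1}^a = M_n^b$ (SR2) and $M_{n+1}^b = 2M_n^a$ (SR3 for $a$-cycles of length $\geq 2$, with SR1 contributing $b$-$2$-cycles of length $2\leq 2M_n^a$). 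Starting from $M_1^a=1$, $M_1^b=2$, the recursion yields $\max(M_n^a,M_n^b) = 2^{\lceil n/2\rceil}$. The main obstacle will be the cactus claim; once it is in place, the rest reduces to local counting and an elementary recursion.
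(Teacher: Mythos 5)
The paper gives no actual proof of Proposition \ref{BasicProp} (it is dismissed as ``an easy consequence of the substitutional rules''), so your write-up is necessarily more detailed than the original. Most of it is sound: the $4$-regularity, the absence of $b$-loops, the classification of the $a$- and $b$-cycles of $\Gamma_{n+1}$ in terms of those of $\Gamma_n$ via SR1--SR3, the inductive proof that an $a$-cycle and a $b$-cycle of $\Gamma_{n+1}$ meet in at most one vertex (reducing, on the $00$-prefixed vertices, to the intersection of the underlying cycles of $\Gamma_n$), and the recursion $M_{n+1}^a=M_n^b$, $M_{n+1}^b=2M_n^a$ yielding $2^{\lceil n/2\rceil}$ are all correct.

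The gap is in the step ``any two cycles in $\Gamma_n$ meet in at most one vertex; hence each cycle is a block and every block is a cycle.'' What you have actually proved is that any two \emph{monochromatic} cycles (orbits of $a$ or of $b$) meet in at most one vertex. That is strictly weaker than the assertion that any two simple cycles of the graph do, and it does not by itself yield the block structure: a partition of the edge set into cycles that pairwise share at most one vertex can still have a $2$-connected union. For instance, three triangles $T_1,T_2,T_3$ with $T_1\cap T_2$, $T_2\cap T_3$, $T_3\cap T_1$ equal to three distinct single vertices form one block that is not a cycle, and removing one of the shared vertices does \emph{not} disconnect the graph. Consequently your local count (``a non-loop vertex lies in exactly two non-loop cycle-blocks, so removing it yields exactly two components'') silently assumes the cactus structure it is supposed to establish; you must also rule out such ``cycles of cycles.'' Two cheap repairs: (i) strengthen the induction hypothesis to ``$\Gamma_n$ is a connected cactus'' and observe that SR1--SR3 produce $\Gamma_{n+1}$ from $\Gamma_n$ by relabelling each vertex $x$ as $0x$, turning each $b$-edge into an $a$-edge, subdividing each non-loop $a$-edge once, and attaching pendant loops and pendant double edges at single vertices --- i.e.\ $\Gamma_{n+1}$ is a homeomorphic copy of $\Gamma_n$ with pendant blocks, so connectivity and the block structure are inherited; or (ii) keep your pairwise-intersection claim but add the Euler count: the number of monochromatic cycles is $|E|-|V|+1=2^n+1$, which forces each successive cycle to attach to the union of the previous ones at exactly one vertex, hence forces the intersection pattern to be a tree and each monochromatic cycle to be a block. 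With either addition the remaining local arguments for the cut-vertex statements go through as you wrote them.
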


For the remainder of the paper, it will be convenient to consider Schreier graphs $\Gamma_n$, $n\geq 1$, embedded in the plane in such a way that each cycle is a regular
polygon and the graph $\Gamma_n$ has two symmetry axes, a horizontal one and a vertical one.
The center of the central cycle of $\Gamma_n$ coincides with the origin of the plane,
which is the intersection of the axes of symmetry. By convention, the positive rotation by an angle $\alpha$ around the origin is performed in the counterclockwise direction.\\
\indent From now on we will forget about the labels on the edges of the Schreier graphs and only consider unlabeled graphs. On one hand, our aim is to classify limits of Schreier graphs up to isomorphism of unlabeled graphs. On the other hand, in the case of the Basilica group the labeling is uniquely determined by the graph, so we are not losing any information by forgetting the labels.\\
\indent Recall from Subsection \ref{subsecSchreierGraphs} that the finite Schreier graphs $\{\Gamma_n\}_{n\geq 1}$ form a sequence of graph coverings and that we denote by $\pi_{n+1}:\Gamma_{n+1}\rightarrow\Gamma_n$ the covering projection given by $\pi_{n+1}(x_1\ldots x_nx_{n+1}) = x_1\ldots x_n$.
In the remaining part of this subsection we shall describe in detail the structure of the graphs $\Gamma_n$ and of the projections $\pi_n$.

\begin{defi}
Let $v\in V(\Gamma_n)\backslash\{0^n\}$ be a cut vertex. Removing $v$ splits $\Gamma_n$ into two connected components, $U_1$ and $U_2$, one of them (say $U_1$)
containing the vertex $0^n$. We call the \textbf{decoration} $\mathcal{D}(v)$ of $v$ the subgraph of $\Gamma_n$ induced by the vertex set $V(U_2)\cup \{v\}$.
If $v\in V(\Gamma_n)$ has a loop, then $\mathcal{D}(v)$ is the subgraph induced by $v$. Finally, if $v=0^n$, then $\mathcal{D}(0^n)$ is the
subgraph induced by $V(U_i)\cup \{0^n\}$, where $0^{n-1}1\notin U_i$.\\
\indent For any cut vertex $v$ of $\Gamma_n$, we also consider the subgraph $\mathcal{D}(v)^c$ induced by $V(U_1)\cup \{v\}$.
If $v$ has a loop attached to it, then $\mathcal{D}(v)^c$ is just $\Gamma_n$ with the corresponding loop erased.\\
\indent The decoration of a given vertex $v\in V(\Gamma_n)$ is
called a \textbf{k-decoration} (or a \textbf{decoration of height k}) if it is isomorphic to the
decoration of the vertex $0^k$ in the Schreier graph $\Gamma_k$ for some $1\leq k\leq n$.
\end{defi}

\begin{prop}\label{PropMultiple}
\begin{enumerate}
\item Every decoration in $\Gamma_n$ is a $k$-decoration for some $1\leq
k\leq n$.
\item Let $v\in V(\Gamma_n)\backslash\{0^n\}$. Then, the decoration
$\mathcal{D}(v)$ of $v$ is a $k$-decoration if and only if, while reading $v$ from the left, we first encounter $1$ in the $k$-th position.
\item  Let $v\in
V(\Gamma_{n+1})\backslash\{0^{n+1},0^n1\}$ and let
$\mathcal{D}(v)$ be its decoration. Then $\mathcal{D}(v)$ is
mapped under $\pi_{n+1}$ bijectively to the decoration
$\mathcal{D}(\pi_{n+1}(v))\in\Gamma_n$ of $\pi_{n+1}(v)$.
\end{enumerate}
\end{prop}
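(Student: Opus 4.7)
Plan: I will prove all three parts simultaneously by induction on $n$. The base case $n=1$ is trivial, since $\Gamma_1$ has two vertices each carrying an $a$-loop and every decoration is a single vertex with a loop, hence a $1$-decoration.

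For the inductive step, I would handle three cases for $v \in V(\Gamma_{n+1})$. If $v = 1w$, then $v$ has an $a$-loop, so $\mathcal{D}(v) = \{v\}$ is trivially a $1$-decoration with first $1$ at position $1$, and part (3) is immediate from the same observation applied to $u = \pi_{n+1}(v) = 1w'$. If $v = 0w$ with $v \notin \{0^{n+1}, 0^n 1\}$, set $u = \pi_{n+1}(v)$; then $u \neq 0^n$ starts with $0$, and by the inductive hypothesis for part (2), $\mathcal{D}(u)$ is a $k$-decoration where $k$ is the position of the first $1$ in $u$ (equivalently in $v$, since they share their first $n$ letters); once part (3) is established for $v$, this gives $\mathcal{D}(v) \cong \mathcal{D}(u)$, so $\mathcal{D}(v)$ is a $k$-decoration. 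The exceptional case $v = 0^n 1$, with first $1$ at position $n+1$, is handled via the deck transformation $\sigma_{n+1}$ of the two-fold cover $\pi_{n+1}$: a case analysis of straight versus crossed lifts of edges shows $\sigma_{n+1}$ is a graph automorphism of $\Gamma_{n+1}$ exchanging the two preimages of each vertex of $\Gamma_n$; in particular it swaps $0^{n+1} \leftrightarrow 0^n 1$, restricting to an isomorphism $\mathcal{D}(0^{n+1}) \cong \mathcal{D}(0^n 1)$ that makes the latter an $(n+1)$-decoration by definition. Part (1) then follows from part (2) together with the definition of an $(n+1)$-decoration.

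The heart of the matter is part (3). I would show that $\pi_{n+1}^{-1}(\mathcal{D}(u))$ splits as two disjoint copies of $\mathcal{D}(u)$ glued at the two preimages of $u$, identifying these copies with $\mathcal{D}(v)$ and $\mathcal{D}(v')$. By Proposition \ref{BasicProp} the blocks of $\mathcal{D}(u)$ are cycles, so this reduces to verifying that the monodromy of $\pi_{n+1}$ along each such cycle is trivial. Loops at vertices $1w''$ lift trivially, since $a$ fixes every vertex starting with $1$ at every level. Larger cycles each correspond to a group element $g \in \mathrm{Stab}_B(u)$ obtained by composing the generators read off along the cycle, and trivial monodromy amounts to $g \in \mathrm{Stab}_B(u\cdot 0) \cap \mathrm{Stab}_B(u\cdot 1)$, i.e., the section $g|_u$ acts trivially on the first level of the subtree below $u$.

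The main obstacle is precisely this monodromy verification. I would carry it out by exploiting the self-similar formulas $a = e(b, id)$ and $b = \varepsilon(a, id)$ to track how iterated generators such as $a^{2^j}$ and $b^{2^j}$ act on successive levels. The crucial qualitative point is that for $u \neq 0^n$ the cycles arising in $\mathcal{D}(u)$ are \emph{peripheral}, in the sense that the corresponding group elements have sections vanishing at the relevant depth below $u$; by contrast, the central cycle of $\Gamma_n$ passing through $0^n$ lifts to a genuinely doubled cycle in $\Gamma_{n+1}$. This dichotomy is exactly what forces the two preimages of $0^n$, namely $v = 0^{n+1}$ and $v = 0^n 1$, to be excluded from the statement of part (3).
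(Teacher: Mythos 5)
Your overall strategy --- establishing Part (3) first, by showing that the degree-two covering $\pi_{n+1}$ restricts to a trivial covering over each $\mathcal{D}(u)$ with $u\neq 0^n$, and then deducing Part (2) by induction --- is genuinely different from the paper's, which proves Parts (1)--(2) via the substitutional rules and then gets Part (3) almost for free (cut vertices project to cut vertices, no vertex of $\mathcal{D}(v)$ projects to $0^n$, so $\mathcal{D}(v)$ surjects onto $\mathcal{D}(\pi_{n+1}(v))$; both being $k$-decorations for the same $k$, they have equal cardinality and the surjection is a bijection). Your route is viable in principle, and the deck transformation you invoke is simply the flip of the last letter, which commutes with every generator because each section acts on $\{0,1\}$ through $S_2$. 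But your ``crucial qualitative point'' is false as stated: the unique cycle of $\Gamma_n$ with nontrivial monodromy is \emph{not} the central cycle $C'(0^n)$ (through $0^n$ and $0^{n-1}1$) --- that cycle lifts to two disjoint copies, namely $C(0^{n+1})\subset\mathcal{D}(0^{n+1})$ and $C(0^n1)\subset\mathcal{D}(0^n1)$. The cycle that doubles is the \emph{other} cycle through $0^n$, i.e.\ $C(0^n)\subset\mathcal{D}(0^n)$: the element carried around it is a power $a^{2^j}$ or $b^{2^j}$ whose section at $0^n$ is $b$, with root permutation $\varepsilon$, and its connected double cover is the central cycle of $\Gamma_{n+1}$ (this is exactly Fig.~4 and Proposition \ref{PropRCR}). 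Consequently your dichotomy does not explain the exclusion of $0^{n+1}$ and $0^n1$; the actual reason is that $\mathcal{D}(0^{n+1})$ and $\mathcal{D}(0^n1)$ are each mapped bijectively onto $\mathcal{D}(0^n)^c$, not onto $\mathcal{D}(0^n)$.

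The second, more serious issue is that the heart of your argument --- every cycle contained in some $\mathcal{D}(u)$, $u\neq 0^n$, has trivial monodromy --- is only asserted. The claim that ``the corresponding group elements have sections vanishing at the relevant depth'' cannot be checked without first knowing which cycles occur in $\Gamma_n$ and which words in the generators run around them; note also that these cycles are based at various vertices $w$ of $\mathcal{D}(u)$, so the relevant condition is on $g|_w$, not on $g|_u$. That structural inventory is precisely what Proposition \ref{rulesbas} supplies, so you have not actually bypassed the substitutional rules. If you wish to keep the monodromy approach, the clean fix is a counting argument: $\Gamma_n$ is a cactus with $b_1(\Gamma_n)=2^{n+1}-2^n+1=2^n+1$ cycles, and since $b_1(\Gamma_{n+1})=2^{n+1}+1=2\,b_1(\Gamma_n)-1$, exactly one cycle of $\Gamma_n$ has nontrivial monodromy; the single computation above identifies it as $C(0^n)$, which lies in no $\mathcal{D}(u)$ with $u\neq 0^n$ because it passes through $0^n$. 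As written, the proposal has a gap at its central step and rests on a false supporting claim.
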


\begin{proof}
\emph{Part 1.} Observe that, given $v\in V(\Gamma_n)$ and its
decoration $\mathcal{D}(v)\subset \Gamma_n$, applying SR to
$\mathcal{D}(v)$ yields a graph isomorphic to
$\mathcal{D}(0v)\subset \Gamma_{n+1}$. Indeed, if $v\neq 0^n$,
then the subgraph of $\Gamma_{n+1}$ resulting from applying the SR
to $\mathcal{D}(v)$ does not contain $0^{n+1}$. Since the vertex
$0v$ is a cut vertex, the statement follows by definition of a
decoration. On the other hand, if $v=0^n$, we can repeat the
previous argument replacing $0^n$ by $0^{n-1}1$. We
now prove the assertion by induction on $n$. The
decorations of the vertices of $\Gamma_1$ are both 1-decorations.
Let $v\in V(\Gamma_{n+1})$. If $v=1u$ for some $u$ of length $n$,
then it follows from the SR that $v$ has a loop, hence the
decoration of $v$ is a $1$-decoration. If $v=0u$, then consider
vertex $u$ in $\Gamma_n$ together with its decoration
$\mathcal{D}(u)$. By induction hypothesis, $\mathcal{D}(u)$ is a
$k$-decoration for some $1\leq k\leq n$ that is, $\mathcal{D}(u)$
is isomorphic to $\mathcal{D}(0^k)$. Hence, applying the SR to
$\mathcal{D}(u)$ yields a subgraph of $\Gamma_{n+1}$ isomorphic to
the subgraph of $\Gamma_{k+1}$ obtained by applying the SR to
$\mathcal{D}(0^k)$. On the other hand,
applying the SR to $\mathcal{D}(u)$ yields
$\mathcal{D}(0u)=\mathcal{D}(v)$. Hence,
$\mathcal{D}(v)$ is isomorphic to $\mathcal{D}(0^{k+1})$.\\
\indent \emph{Part 2.} Let us write $v=0^l1u$ for some $l\geq 0$.
Consider the decoration $\mathcal{D}(1u)\subset \Gamma_{n-l}$ of
$1u$. By the SR, $\mathcal{D}(1u)$ is isomorphic to
$\mathcal{D}(0)\subset \Gamma_1$. Starting from $\mathcal{D}(0)$
and applying the SR $k-1$ times yields $\mathcal{D}(0^k)$.
Similarly, starting from $\mathcal{D}(1u)$ and applying the SR $l$
times yields $\mathcal{D}(v)$. Suppose now that $\mathcal{D}(v)$
is a $k$-decoration. This means that $\mathcal{D}(v)$ is
isomorphic to $\mathcal{D}(0^k)$, thus by the observations we just
made, $l$ must be equal to $k-1$. Conversely, if $l=k-1$, then
necessarily $\mathcal{D}(v)$ is isomorphic to
$\mathcal{D}(0^k)$.\\ \indent \emph{Part 3.} Given $v\in
V(\Gamma_{n+1})\backslash\{0^{n+1},0^n1\}$, $\pi_{n+1}(v)\in
V(\Gamma_n)$ is obtained by erasing the last letter of $v$. On one
hand, if $w\in V(\Gamma_{n+1})$ is a cut vertex, then
$\pi_{n+1}(w)\in V(\Gamma_n)$ is a cut vertex too. On the other
hand, for all $w\in\mathcal{D}(v)$, $\pi_{n+1}(w)\neq 0^n$. Thus,
$\mathcal{D}(v)$ must be mapped (surjectively) on
$\mathcal{D}(\pi_{n+1}(v))$. But we have proven in {\it Part 2} that
$\mathcal{D}(v)$ is a $k$-decoration if and only if
$\mathcal{D}(\pi_{n+1}(v))$ is a $k$-decoration. The statement
follows.
\end{proof}

\begin{defi}
For every $n\geq 1$, we call \textbf{central cycle} of $\Gamma_n$ the
unique cycle containing both vertices $0^n$ and $0^{n-1}1$. The
decoration $\mathcal{D}(0^n)$ is the \textbf{left part} of
$\Gamma_n$, the decoration $\mathcal{D}(0^{n-1}1)$ is the
\textbf{right part} of $\Gamma_n$, the subgraph $\Gamma_n\setminus
\{\mathcal{D}(0^n)\cup \mathcal{D}(0^{n-1}1)\}$ is the \textbf{central part} of $\Gamma_n$.
\end{defi}

\indent It is convenient to encode the graph $\Gamma_n$ by a diagram, denoted $\overline{D}_n$, constructed as follows: consider a path containing $2^{\lceil\frac{n}{2}\rceil}$ edges. Its vertices are identified with vertices of the central cycle of $\Gamma_n$ so that
the left-half of $\overline{D}_n$ encodes the upper-half of the central cycle of $\Gamma_n$, whereas the right-half of $\overline{D}_n$ encodes the lower-half of the central cycle; and the vertices situated at the extremities of the path both encode the vertex $0^n$. Denote by $D_n$ the diagram without these two \emph{boundary} vertices. Label every vertex by the height of the decoration attached to it. Here are some examples:


\begin{center}
\begin{picture}(360,25)
\letstate A=(0,10) \letstate B=(22,10)\letstate C=(44,10) \letstate D=(154,10)
\letstate E=(176,10) \letstate F=(198,10)

\drawundirectededge(A,C){}\drawundirectededge(D,F){}

\drawvertex(A){$\circ$}\drawvertex(B){$\bullet$}\drawvertex(C){$\circ$}
\drawvertex(D){$\circ$}\drawvertex(E){$\bullet$}\drawvertex(F){$\circ$}

\put(-2,1){1}\put(20,1){1}\put(42,1){1}\put(152,1){2}\put(174,1){2}\put(196,1){2}
\put(-2,17){$\overline{D}_1$}\put(152,17){$\overline{D}_2$}
\end{picture}
\end{center}

\begin{center}
\begin{picture}(360,25)
\letstate A=(0,10) \letstate B=(22,10)\letstate C=(44,10) \letstate D=(66,10)
\letstate E=(88,10) \letstate F=(154,10)\letstate G=(176,10) \letstate H=(198,10)\letstate I=(220,10) \letstate L=(242,10)

\drawundirectededge(A,E){}\drawundirectededge(F,L){}

\drawvertex(A){$\circ$}\drawvertex(B){$\bullet$}\drawvertex(C){$\bullet$}
\drawvertex(D){$\bullet$}\drawvertex(E){$\circ$}\drawvertex(F){$\circ$}
\drawvertex(G){$\bullet$}\drawvertex(H){$\bullet$}\drawvertex(I){$\bullet$}\drawvertex(L){$\circ$}

\put(-2,1){3}\put(20,1){1}\put(42,1){3}\put(64,1){1}\put(86,1){3}\put(152,1){4}
\put(174,1){2}\put(196,1){4}\put(218,1){2}\put(240,1){4}

\put(-2,17){$\overline{D}_3$}\put(152,17){$\overline{D}_4$}
\end{picture}
\end{center}

\begin{center}
\begin{picture}(360,25)
\letstate A=(0,10) \letstate B=(22,10)\letstate C=(44,10) \letstate D=(66,10)
\letstate E=(88,10) \letstate F=(110,10)\letstate G=(132,10) \letstate H=(154,10)\letstate I=(176,10)

\drawundirectededge(A,I){}

\drawvertex(A){$\circ$}\drawvertex(B){$\bullet$}\drawvertex(C){$\bullet$}
\drawvertex(D){$\bullet$}\drawvertex(E){$\bullet$}\drawvertex(F){$\bullet$}
\drawvertex(G){$\bullet$}\drawvertex(H){$\bullet$}\drawvertex(I){$\circ$}

\put(-2,1){5}\put(20,1){1}\put(42,1){3}\put(64,1){1}\put(86,1){5}\put(108,1){1}
\put(130,1){3}\put(152,1){1}\put(174,1){5}

\put(-2,17){$\overline{D}_5$}
\end{picture}
\end{center}

\begin{center}
\begin{picture}(360,25)
\letstate A=(0,10) \letstate B=(22,10)\letstate C=(44,10) \letstate D=(66,10)
\letstate E=(88,10) \letstate F=(110,10)\letstate G=(132,10) \letstate H=(154,10)\letstate I=(176,10)

\drawundirectededge(A,I){}

\drawvertex(A){$\circ$}\drawvertex(B){$\bullet$}\drawvertex(C){$\bullet$}
\drawvertex(D){$\bullet$}\drawvertex(E){$\bullet$}\drawvertex(F){$\bullet$}
\drawvertex(G){$\bullet$}\drawvertex(H){$\bullet$}\drawvertex(I){$\circ$}

\put(-2,1){6}\put(20,1){2}\put(42,1){4}\put(64,1){2}\put(86,1){6}\put(108,1){2}
\put(130,1){4}\put(152,1){2}\put(174,1){6}

\put(-2,17){$\overline{D}_6$}
\end{picture}
\end{center}

\begin{center}
\begin{picture}(360,25)
\letstate A=(0,10) \letstate B=(22,10)\letstate C=(44,10) \letstate D=(66,10)
\letstate E=(88,10) \letstate F=(110,10)\letstate G=(132,10) \letstate H=(154,10)\letstate I=(176,10)
\letstate L=(198,10) \letstate M=(220,10)\letstate N=(242,10) \letstate O=(264,10)
\letstate P=(286,10) \letstate Q=(308,10)\letstate R=(330,10) \letstate S=(352,10)

\drawundirectededge(A,S){}

\drawvertex(A){$\circ$}\drawvertex(B){$\bullet$}\drawvertex(C){$\bullet$}
\drawvertex(D){$\bullet$}\drawvertex(E){$\bullet$}\drawvertex(F){$\bullet$}
\drawvertex(G){$\bullet$}\drawvertex(H){$\bullet$}\drawvertex(I){$\bullet$}
\drawvertex(L){$\bullet$}\drawvertex(M){$\bullet$}\drawvertex(N){$\bullet$}
\drawvertex(O){$\bullet$}\drawvertex(P){$\bullet$}\drawvertex(Q){$\bullet$}
\drawvertex(R){$\bullet$}\drawvertex(S){$\circ$}

\put(-2,1){7}\put(20,1){1}\put(42,1){3}\put(64,1){1}\put(86,1){5}\put(108,1){1}
\put(130,1){3}\put(152,1){1}\put(174,1){7}\put(196,1){1}\put(218,1){3}\put(240,1){1}\put(262,1){5}\put(284,1){1}\put(306,1){3}
\put(328,1){1}\put(350,1){7}

\put(-2,17){$\overline{D}_7$}
\end{picture}
\end{center}


\begin{prop} \label{PropRCR}
The diagram $\overline{D}_{n+1}$ encoding $\Gamma_{n+1}$ is obtained from $D_{n-1}$ by the following recursive rule (denoted by \textbf{RCR}):

\begin{center}
\begin{picture}(400,25)
\letstate A=(110,10) \letstate B=(200,10)\letstate C=(290,10)

\drawundirectededge(A,C){}

\drawvertex(A){$\circ$}\drawvertex(B){$\bullet$}\drawvertex(C){$\circ$}

\put(100,1){$n+1$}\put(280,1){$n+1$}\put(190,1){$n+1$}\put(148,15){$D_{n-1}$}\put(238,15){$D_{n-1}$}

\put(50,8){$\overline{D}_{n+1}:$}
\end{picture}
\end{center}
\end{prop}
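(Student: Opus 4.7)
The plan is to apply the substitutional rules of Proposition \ref{rulesbas} twice, moving from $\Gamma_{n-1}$ to $\Gamma_{n+1}$ through $\Gamma_n$, and to track the resulting transformation of the central cycle. Depending on the parity of $n-1$, the central cycle $C_{n-1}$ is either a $b$-cycle (when $n-1$ is odd) or an $a$-cycle (when $n-1$ is even); this is immediate by induction from SR, since SR2 converts a $b$-cycle into an $a$-cycle of the same length while SR3 turns an $a$-cycle into a $b$-cycle of double length with loops inserted at every other vertex. Accordingly, the two SR applications come in one of the two orders (SR2 then SR3, or SR3 then SR2), but the net effect on $C_{n-1}$ is in both cases the same subdivide-and-insert operation: each edge of $C_{n-1}$ is replaced by a length-two path in $C_{n+1}$ whose middle vertex carries an $a$-loop, and each original vertex $v_j$ of $C_{n-1}$ is transformed into the vertex of $\Gamma_{n+1}$ obtained from $v_j$ by prepending $00$. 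The cycle length thus doubles to $L_{n+1}=2L_{n-1}$.

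Next, I would compute the decoration heights around $C_{n+1}$ via Part 2 of Proposition \ref{PropMultiple}: the height of the decoration at a vertex $w$ equals the position of the first letter $1$ in $w$. Prepending $00$ shifts this position by exactly $2$, so each transformed old vertex has decoration height $\ell_{n-1}(j)+2$, where $\ell_k(j)$ denotes the label at position $j$ in $\overline{D}_k$. In particular, the images of $v_0=0^{n-1}$ and $v_{L_{n-1}/2}=0^{n-2}1$ become $0^{n+1}$ and $0^n1$, both with decoration height $n+1$; these are precisely the three special vertices (two $\circ$'s at the endpoints and one $\bullet$ at the middle) bearing the label $n+1$ in the RCR. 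Each inserted vertex carries a constant decoration height $c_{n+1}$, equal to $1$ when $n+1$ is odd (the loop is born directly in $\Gamma_{n+1}$ via SR3) and equal to $2$ when $n+1$ is even (the loop is born one step earlier in $\Gamma_n$ and then gains a prepended $0$ via SR2). Cutting $C_{n+1}$ at $0^{n+1}$ to form $\overline{D}_{n+1}$ therefore yields the labeling
\[
\ell_{n+1}(2m)=\ell_{n-1}(m)+2, \qquad \ell_{n+1}(2m+1)=c_{n+1}.
\]

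The proof then concludes by induction on $n$, with the base cases $\overline{D}_1$ and $\overline{D}_2$ verified directly from the pictures. Assuming RCR for $\overline{D}_{n-1}$, its labels satisfy $\ell_{n-1}(j)=\ell_{n-3}(j)$ on the first half of $D_{n-1}$ and the symmetric identity on the second half. Plugging this into the displayed formulas forces $\ell_{n+1}(j)=\ell_{n-1}(j)$ for every interior position $1\le j\le L_{n-1}-1$: for odd $j$ both sides equal $c_{n+1}$, which matches $c_{n-1}$ since $n+1$ and $n-1$ have the same parity; for $j=2m$ even the identity $\ell_{n-1}(2m)=\ell_{n-1}(m)+2$ follows from the RCR applied to $\overline{D}_{n-1}$ by a short case split on whether $m$ lies in the first half of $D_{n-1}$, on its midpoint, or in the second half. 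Reflection symmetry extends these equalities to the mirror arc, and the three explicit $(n+1)$-labels at positions $0$, $L_{n-1}$ and $2L_{n-1}$ provide the two $\circ$'s and the $\bullet$ required by the RCR. The main obstacle is Step~1: verifying uniformly, across the two parities of $n-1$, that the composition of two SR applications really does implement the described subdivide-and-insert-plus-prepend-$00$ operation on the central cycle; once this combinatorial setup is in place, the rest of the argument is a bookkeeping induction on the labels of $\overline{D}_n$.
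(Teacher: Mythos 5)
Your argument is correct, but it runs in the opposite direction from the paper's and is therefore a genuinely different proof. The paper descends through the covering projections (which delete the \emph{last} letter): $\pi_{n+1}$ wraps the central cycle of $\Gamma_{n+1}$ twice around the cycle of $\mathcal{D}(0^n)$ containing $0^n$, sending the two $(n+1)$-decorations onto copies of $\mathcal{D}(0^n)^c$, and then $\pi_n$ maps $\mathcal{D}(0^n)$ bijectively onto $\mathcal{D}(0^{n-1})^c$; since decorations project to decorations of the same height (Proposition \ref{PropMultiple}, Part 3), the label pattern around the central cycle of $\Gamma_{n+1}$ is read off directly as two copies of $D_{n-1}$ separated by $(n+1)$-labels, with no further induction on the labels. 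You instead ascend by applying the substitutional rules of Proposition \ref{rulesbas} twice (these \emph{prepend} letters, so they are not inverse to the $\pi_n$'s), derive the explicit doubling recursion $\ell_{n+1}(2m)=\ell_{n-1}(m)+2$, $\ell_{n+1}(2m+1)=c_{n+1}$, and then need an additional bookkeeping induction to verify that this recursion reproduces the stated RCR. Both routes work; yours produces a more explicit description of the labels (essentially the content of Lemma \ref{LemDistDec}) at the price of the extra induction, while the paper's is shorter and purely geometric. One small imprecision in your Step 1: the middle vertex of a subdivided edge carries an $a$-loop only when $n+1$ is odd; when $n+1$ is even it carries a $2$-decoration (a $b$-digon whose far vertex bears the loop). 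You correct this yourself two sentences later when computing $c_{n+1}$, so it is a slip of phrasing rather than a gap.
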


\begin{proof}
We know by Proposition \ref{PropMultiple} that when we
project $\Gamma_{n+1}$ on $\Gamma_n$ under $\pi_{n+1}$, the
decoration of a vertex $v\in
V(\Gamma_{n+1})\backslash\{0^{n+1},0^{n}1\}$ is mapped onto a decoration
of the same height in $\Gamma_n$. There are only two
$(n+1)$-decorations in $\Gamma_{n+1}$, namely those of the vertices
$0^{n+1}$ and $0^n1$, and they are both mapped under $\pi_{n+1}$ to
$\mathcal{D}(0^n)^c$ in such a way that $\mathcal{D}(0^{n+1})$
superposes onto $\mathcal{D}(0^n1)$ after a rotation of
$-180^{\circ}$ around the origin and then a translation
(see Fig. 3).

\begin{center}
\psfrag{w0}{$w0$}\psfrag{u0}{$u0$}\psfrag{0n+1}{$0^{n+1}$}
\psfrag{D(0n+1)}{$\mathcal{D}(0^{n+1})$}\psfrag{pin+1}{$\pi_{n+1}$}\psfrag{pi}{$-180^{\circ}$}

\psfrag{D(0n)c}{$\mathcal{D}(0^n)^c$}\psfrag{0n}{$0^n$}\psfrag{w}{$w$}
\psfrag{u}{$u$}\psfrag{D(0n1)}{$\mathcal{D}(0^n1)$}\psfrag{0n1}{$0^n1$}

\psfrag{0n}{$0^n$}\psfrag{w1}{$w1$}\psfrag{u1}{$u1$}\psfrag{Fig.3}{\textbf{Fig. 3.}}
\includegraphics[width=0.6\textwidth]{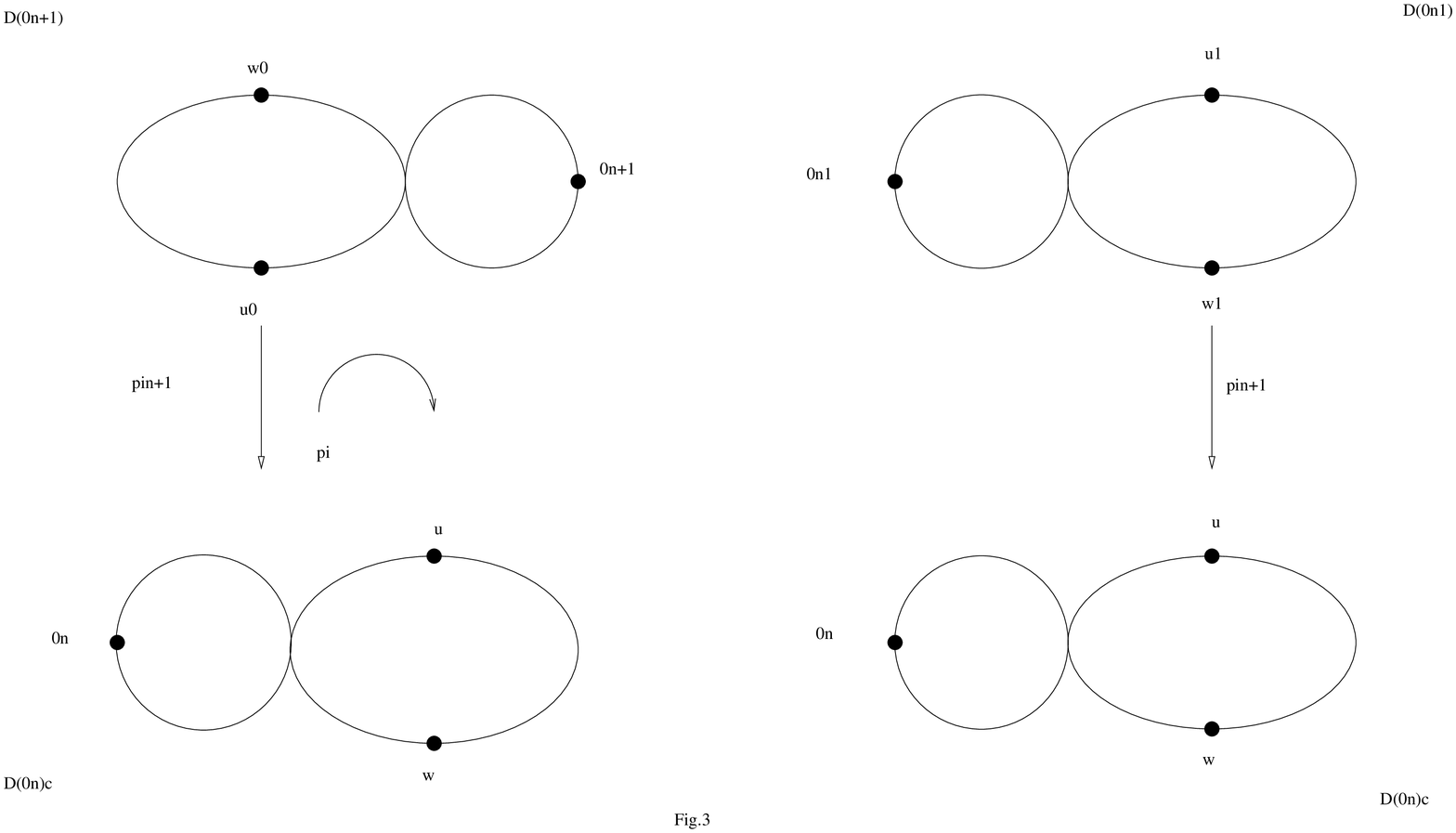}
\end{center}

Thus, the cycle in each of these two $(n+1)$-decorations which
contains respectively $0^{n+1}$ and $0^n1$ is mapped bijectively
to the central cycle of $\Gamma_n$. Moreover, pairs of opposite vertices of the
central cycle of $\Gamma_{n+1}$ are
identified under the projection $\pi_{n+1}$. Hence, the central
cycle of $\Gamma_{n+1}$ is mapped to the twice shorter cycle in
$\Gamma_n$ containing the vertex $0^n$ but not the vertex
$0^{n-1}1$ in such a way that $0^{n+1}$ is identified with $0^n1$
and the two halves of the central cycle are superposed (see Fig. 4).

\begin{center}
\psfrag{pin+1}{$\pi_{n+1}$}
\psfrag{w1}{$w1$}\psfrag{0n+1}{$0^{n+1}$}\psfrag{0n1}{$0^n1$}
\psfrag{w}{$w$}\psfrag{w0}{$w0$}\psfrag{0n}{$0^n$}\psfrag{Fig.4}{\textbf{Fig. 4.}}
\includegraphics[width=0.4\textwidth]{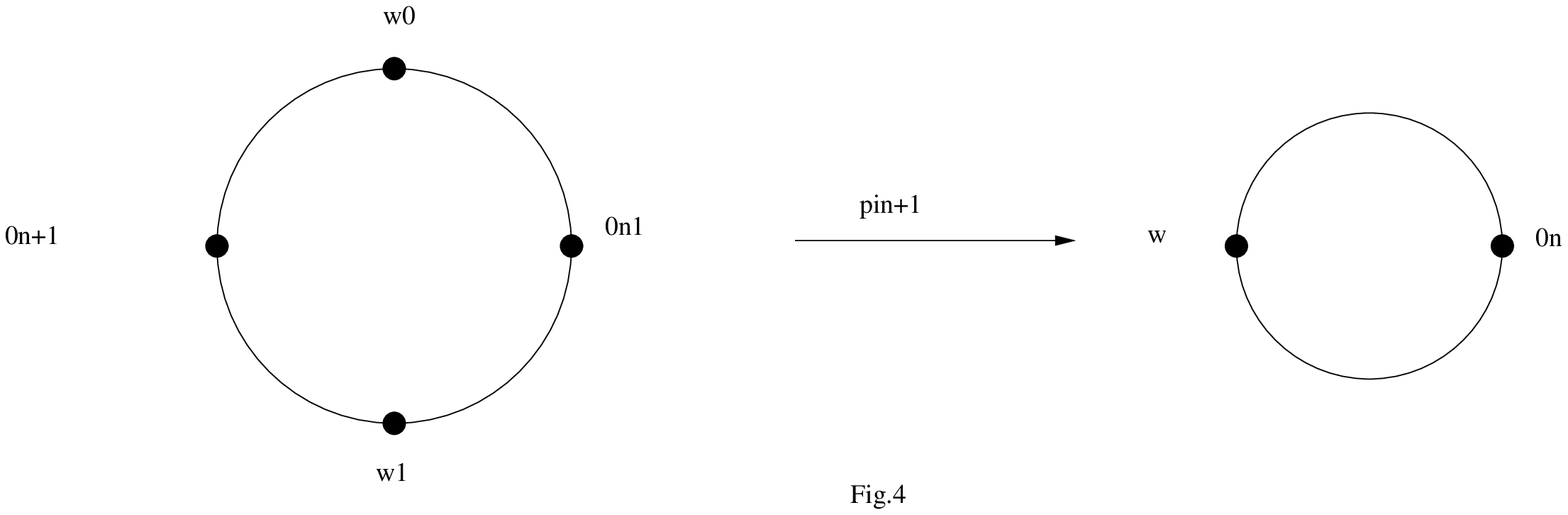}
\end{center}

\noindent Since the decoration $\mathcal{D}(0^n)$ in $\Gamma_n$ is bijectively mapped by $\pi_n$ to
$\mathcal{D}(0^{n-1})^c$ in $\Gamma_{n-1}$, the statement follows.
\end{proof}

\subsection{Converging sequences of Schreier graphs} \label{SubsecCovering}

Recall that for any $n\geq 1$, $\Gamma_n$ is a separable graph whose blocks are cycles.

\begin{defi}
A \textbf{cycle-path} of length $k$, $k\in\mathbb N\cup\{\infty\}$, in a graph $\Gamma$ is a sequence of $k$ distinct cycles in $\Gamma$ such that two consecutive cycles intersect at exactly one point.
\end{defi}

Given $w,w'\in V(\Gamma_n)$, there is a unique cycle-path $P_1,\dots,P_r$ in $\Gamma_n$ of minimal length, such that $w\in P_1$ and $w'\in P_r$. We will say that $P_1,\dots,P_r$ \emph{joins $w$ to $w'$}. Similarly, given $w\in V(\Gamma_n)$ and $P$ a cycle of $\Gamma_n$, there is a unique cycle-path of minimal length $P_1,\dots,P_r=P$ such that $w\in P_1$. We will say then that $P_1,\dots,P_r$  \emph{joins $w$ to $P$}.

\begin{nota} \label{notationCPCL}\rm
Given an infinite word $\xi$ and its prefix $\xi_n$ of length $n$, write $\mathcal{CP}_{\xi_n}=\{P_i^n\}_{i=1}^t$ for the unique cycle-path joining $\xi_n$ to the central cycle of $\Gamma_n$. Then we have $t=|\mathcal{CP}_{\xi_n}|$, the length of the cycle-path, and we denote by $\mathcal{LP}_{\xi_n}=\{k_i^n\}_{i=1}^t$ the sequence of lengths of the cycles in the cycle-path.
\end{nota}

In order to identify the limit $\lim_{n\to\infty}(\Gamma_n,\xi_n)$ in the space of rooted graphs, it is important to keep track of the behaviour of the cycle-path $\mathcal{CP}_{\xi_i}$ as $i$ changes from $n$ to $n+1$. As we will see, there are three possibilities for the length of $\mathcal{CP}_{\xi_i}$: it can either increase/decrease by one, or stay stable. More precisely, we have

\begin{prop} \label{PropInverseCovering}
Let $\xi\in\{0,1\}^\omega$. For any $n\geq 1$, consider the cycle-path $\mathcal{CP}_{\xi_n}$ (respectively $\mathcal{CP}_{\xi_{n+1}}$) in $\Gamma_n$ (respectively in $\Gamma_{n+1}$). Then, the following situations may occur:

\begin{enumerate}
\item  "\textbf{E}xpansion", that is, $|\mathcal{CP}_{\xi_{n+1}}|=|\mathcal{CP}_{\xi_n}|+1$; this occurs if and only if $\xi_n\in\mathcal{D}(0^n)^c$.
\item  "\textbf{C}ontraction", that is, $|\mathcal{CP}_{\xi_{n+1}}|=|\mathcal{CP}_{\xi_n}|-1$; this occurs if and only if $\xi_n\in\mathcal{D}(0^n)$.
\item  "\textbf{S}tability", that is, $|\mathcal{CP}_{\xi_{n+1}}|=|\mathcal{CP}_{\xi_n}|$; this occurs if and only if $\xi_{n+1}$ is either $0^{n+1}$ or
$0^{n}1$.
\end{enumerate}
\end{prop}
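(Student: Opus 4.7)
The plan is to argue case by case, using the covering structure of $\pi_{n+1}:\Gamma_{n+1}\to\Gamma_n$ established in the proof of Proposition \ref{PropRCR}. Three facts from that proof will do all the work: (i) $\pi_{n+1}$ restricts to a graph isomorphism from each of the two $(n+1)$-decorations $\mathcal{D}(0^{n+1})$ and $\mathcal{D}(0^n1)$ onto $\mathcal{D}(0^n)^c$; (ii) the central cycle of $\Gamma_{n+1}$ double-covers the cycle $C\subset\mathcal{D}(0^n)$ through $0^n$ (the cycle containing $0^n$ but not $0^{n-1}1$); (iii) every other decoration of $\Gamma_{n+1}$ is mapped bijectively onto a decoration of $\Gamma_n$ of the same height (Proposition \ref{PropMultiple}, Part 3). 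Since cycle-paths are preserved under isomorphisms of subgraphs, each of these identifications translates directly into a statement about $\mathcal{CP}_{\xi_n}$ and $\mathcal{CP}_{\xi_{n+1}}$.

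For \textbf{Stability}, both $0^{n+1}$ and $0^n1$ lie on the central cycle of $\Gamma_{n+1}$, and their common image $0^n$ lies on the central cycle $C^n$ of $\Gamma_n$; in both cases the cycle-path has length $1$. For \textbf{Expansion}, the hypothesis $\xi_n\in\mathcal{D}(0^n)^c$ (and $\xi_n\neq 0^n$) means $\mathcal{CP}_{\xi_n}$ lies entirely in $\mathcal{D}(0^n)^c$ and terminates at $C^n\ni 0^n$. The preimage $\xi_{n+1}$ lies in one of $\mathcal{D}(0^{n+1})$, $\mathcal{D}(0^n1)$, say in $\mathcal{D}(0^{n+1})$; using the isomorphism in (i), the cycle-path $\mathcal{CP}_{\xi_n}$ lifts to a cycle-path inside $\mathcal{D}(0^{n+1})$ of the same length $|\mathcal{CP}_{\xi_n}|$, whose terminal cycle contains the cut vertex $0^{n+1}$. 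Appending the central cycle of $\Gamma_{n+1}$ (which meets this terminal cycle precisely at $0^{n+1}$) produces $\mathcal{CP}_{\xi_{n+1}}$, of length $|\mathcal{CP}_{\xi_n}|+1$.

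For \textbf{Contraction}, the hypothesis $\xi_n\in\mathcal{D}(0^n)\setminus\{0^n\}$ forces $\mathcal{CP}_{\xi_n}$ to exit $\mathcal{D}(0^n)$ through the unique cut vertex $0^n$; since $C$ is the only cycle of $\mathcal{D}(0^n)$ containing $0^n$, the last two cycles of $\mathcal{CP}_{\xi_n}$ are exactly $C$ and $C^n$. Let $w\in C$ be the cut vertex through which this path enters $C$ (with $w=\xi_n$ if $\xi_n\in C$). By (ii) $w$ has two preimages $v,v'$ on the central cycle of $\Gamma_{n+1}$ and by (iii) the decoration $\mathcal{D}(w)$ lifts isomorphically to each of the decorations $\mathcal{D}(v),\mathcal{D}(v')$; then $\xi_{n+1}$ lies in one of them, say $\mathcal{D}(v)$, and the initial segment of $\mathcal{CP}_{\xi_n}$ up to $w$ lifts to a cycle-path from $\xi_{n+1}$ to $v$ of length $|\mathcal{CP}_{\xi_n}|-2$. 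Appending the central cycle of $\Gamma_{n+1}$ through $v$ yields $\mathcal{CP}_{\xi_{n+1}}$ of length $|\mathcal{CP}_{\xi_n}|-1$: the two cycles $\{C,C^n\}$ of $\mathcal{CP}_{\xi_n}$ have been traded for the single central cycle of $\Gamma_{n+1}$.

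The main obstacle I anticipate is the bookkeeping in the contraction case: I must check that $C$ really is the unique cycle of $\mathcal{D}(0^n)$ incident to $0^n$ (so that the penultimate cycle of $\mathcal{CP}_{\xi_n}$ is forced), and that the lifted cycle-path cannot be shortened by an alternative choice of preimage $v$ or of an alternative route through the double-cover—which follows because $\Gamma_{n+1}$ is separable with cycle blocks (Proposition \ref{BasicProp}), so the cycle-path of minimal length joining a vertex to a cycle is unique. The equivalences in the three cases are obtained by observing that the three hypotheses partition the possibilities for $\xi_{n+1}$, so the "only if" directions follow from the "if" directions together with the trichotomy of the variation $|\mathcal{CP}_{\xi_{n+1}}|-|\mathcal{CP}_{\xi_n}|\in\{-1,0,+1\}$.
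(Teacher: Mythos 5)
Your proof is correct and follows essentially the same route as the paper's: both rest on the decomposition of $\Gamma_{n+1}$ into two copies of $\mathcal{D}(0^n)$ (glued along the doubled cycle through $0^n$ to form the central part) and two copies of $\mathcal{D}(0^n)^c$ attached at $0^{n+1}$ and $0^n1$, and then read off the change in cycle-path length in each of the three cases. The paper phrases this as an explicit cut-and-paste description of $\pi_{n+1}^{-1}$ while you work with the projection $\pi_{n+1}$ and spell out the lifting of cycle-paths in more detail, but the underlying argument is the same.
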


Proposition \ref{PropInverseCovering} motivates the following definition:

\begin{defi} \label{Defindexofstab}
For every word $\xi\in \{0,1\}^{\omega}$, we define its index of stability as

\begin{equation}\label{Eqindexofstab}
z_{\xi} = \sup\{n\geq 1 \ : \ |\mathcal{CP}_{\xi_n}|=1\}.
\end{equation}
Moreover, define the \textbf{SEC-sequence} of $\xi$ to be the infinite word in $\{S,E,C\}$ beginning with $S^{z_{\xi}}$ and such that the $(n+1)$-st letter (for $n \geq z_{\xi}$) of this word is either $E$ or $C$ depending on whether in the passage from $\Gamma_n$ to $\Gamma_{n+1}$ we observe an expansion or a contraction between the cycle-paths $\mathcal{CP}_{\xi_n}$ and $\mathcal{CP}_{\xi_{n+1}}$.
\end{defi}

\begin{rem}\label{RemonSEC}\rm
We can consider the SEC-sequence of any finite prefix $\xi_n$ of $\xi$ by taking the restriction up to the $n$-th term of the SEC-sequence of $\xi$. The occurrence of a block of $t$ expansions beginning at position $n$ in the SEC-sequence of a word $\xi$ encodes the fact that $|\mathcal{CP}_{\xi_{n+t-1}}|-|\mathcal{CP}_{\xi_{n-1}}|=t$ whereas the occurrence of a block of $m/2$ expansions-contractions $EC$ beginning at position $n$ means that
$|\mathcal{CP}_{\xi_{n+m-1}}|-|\mathcal{CP}_{\xi_{n-1}}|=0$. Moreover, observe (see the proof of Proposition \ref{PropInverseCovering} below) that the SEC-sequence associated with an infinite word $\xi$ cannot contain two consecutive C's. 
\end{rem}

We will also need to control the lengths of individual cycles in
the cycle-path $\mathcal{CP}_{\xi_i}$ as $i$ changes from $n$ to
$n+1$. Namely, we have

\begin{lem} \label{lemLP}
Let $\xi\in\{0,1\}^\omega$. For any $n\geq 1$, consider the
cycle-path $\mathcal{CP}_{\xi_n}$ (respectively
$\mathcal{CP}_{\xi_{n+1}}$) in $\Gamma_n$ (respectively in
$\Gamma_{n+1}$). Then, the sequence $\mathcal{LP}_{\xi_{n+1}}$ is
obtained from $\mathcal{LP}_{\xi_{n}}$ as:

\begin{enumerate}
  \item if $\xi_n\in\mathcal{D}(0^n)^c$, then $k_i^{n+1}=k_i^{n}$ for every $i=1,\ldots, t$ and
\begin{eqnarray*}
  \begin{cases}
    k_{t+1}^{n+1}=k_{t}^n & \text{if $n$ is odd}, \\
    k_{t+1}^{n+1}=2k_{t}^n & \text{if $n$ is even};
  \end{cases}
\end{eqnarray*}
   \item if $\xi_n\in\mathcal{D}(0^n)$, then $k_i^{n+1}=k_i^n$ for every $i=1,\ldots, t-2$ and
\begin{eqnarray*}
  \begin{cases}
    k_{t-1}^{n+1}=k_{t}^{n} & \text{if $n$ is odd}, \\
    k_{t-1}^{n+1}=2k_{t}^{n} & \text{if $n$ is even}.
  \end{cases}
\end{eqnarray*}
\end{enumerate}
\end{lem}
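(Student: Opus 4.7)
The plan is to treat the two cases of Proposition \ref{PropInverseCovering} separately, in each case using the structural results collected in Propositions \ref{PropMultiple} and \ref{PropRCR}, together with the explicit value $2^{\lceil n/2\rceil}$ for the length of the central cycle of $\Gamma_n$. Write $C_n$ for the central cycle of $\Gamma_n$.

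\textbf{Expansion case.} When $\xi_n\in\mathcal{D}(0^n)^c$, the whole cycle-path $\mathcal{CP}_{\xi_n}$ is contained in $\mathcal{D}(0^n)^c$. By the proof of Proposition \ref{PropRCR}, $\pi_{n+1}$ restricts to a bijection $\mathcal{D}(0^{n+1})\to\mathcal{D}(0^n)^c$ (and symmetrically for $\mathcal{D}(0^n1)$). I would first observe that $\xi_{n+1}$ belongs, without loss of generality, to $\mathcal{D}(0^{n+1})$, and then lift $\mathcal{CP}_{\xi_n}$ through this bijection block by block; the resulting sequence inside $\mathcal{D}(0^{n+1})$ has the same list of cycle-lengths, and its last block (the lift of $C_n$) passes through the vertex $0^{n+1}$, which belongs to $C_{n+1}$. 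Hence $\mathcal{CP}_{\xi_{n+1}}$ is exactly this lifted sequence extended by $C_{n+1}$, giving $k_i^{n+1}=k_i^n$ for $i\le t$ and $k_{t+1}^{n+1}=|C_{n+1}|=2^{\lceil(n+1)/2\rceil}$. Comparing with $k_t^n=2^{\lceil n/2\rceil}$ and distinguishing the parity of $n$ then produces the stated formula.

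\textbf{Contraction case.} When $\xi_n\in\mathcal{D}(0^n)\setminus\{0^n\}$, neither of the two lifts of $\xi_n$ can lie in $\mathcal{D}(0^{n+1})\cup\mathcal{D}(0^n1)$, since this union projects onto $\mathcal{D}(0^n)^c$; thus $\xi_{n+1}$ lies in the ``central part'' of $\Gamma_{n+1}$, either on $C_{n+1}$ itself or in one of the small decorations of heights prescribed by $D_{n-1}$ in the RCR. Write $\mathcal{CP}_{\xi_n}=\{P_1^n,\dots,P_{t-1}^n,C_n\}$, where $P_{t-1}^n$ meets $C_n$ at the cut vertex $0^n$; a short induction based on the SR shows that $|P_{t-1}^n|=|C_{n-1}|$. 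The crucial structural claim to establish is that under $\pi_{n+1}$: (a) for every $i\le t-2$ the block $P_i^n$ has two disjoint lifts of the same length, exactly one of which lies in the central part of $\Gamma_{n+1}$ and is a block of $\mathcal{CP}_{\xi_{n+1}}$; (b) the block $P_{t-1}^n$ lifts to a single cycle of doubled length, and this lift is precisely $C_{n+1}$; (c) the central cycle $C_n$ itself splits into two disjoint copies, one inside $\mathcal{D}(0^{n+1})$ and one inside $\mathcal{D}(0^n1)$, so neither appears in $\mathcal{CP}_{\xi_{n+1}}$. Granting (a)--(c), $\mathcal{CP}_{\xi_{n+1}}$ consists of $t-2$ same-length lifts followed by $C_{n+1}$, and the identity $k_{t-1}^{n+1}=2|C_{n-1}|=2^{\lceil(n+1)/2\rceil}$ reduces to $k_t^n$ when $n$ is odd and to $2k_t^n$ when $n$ is even, exactly as required.

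\textbf{Main obstacle.} The delicate point is establishing (a)--(c): one must determine exactly which blocks of $\Gamma_n$ have trivial monodromy in the double cover $\pi_{n+1}$ and which have nontrivial monodromy. My plan is to read this off directly from the substitutional rules SR1--SR3 of Proposition \ref{rulesbas}, which describe block by block how the edges of $\Gamma_n$ are lifted to $\Gamma_{n+1}$; alternatively one can proceed by induction on $n$ using the RCR. The picture that should emerge is that the unique ``monodromy-doubling'' block is $P_{t-1}^n$, i.e.\ the block of $\mathcal{D}(0^n)$ adjacent to $C_n$ at $0^n$, and that the doubling of this block is precisely what produces the new central cycle $C_{n+1}$, while every other block of $\Gamma_n$ splits cleanly into two parallel sheets.
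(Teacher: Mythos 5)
Your proposal is correct and follows essentially the same route as the paper: the paper proves this lemma jointly with Proposition \ref{PropInverseCovering} by describing $\pi_{n+1}^{-1}$ explicitly (two copies of $\mathcal{D}(0^n)$ glued along the split vertex $0^n$ to form the new central cycle, with two disjoint copies of $\mathcal{D}(0^n)^c$ attached at $0^{n+1}$ and $0^n1$), which is exactly your monodromy statements (a)--(c) together with the expansion-case lifting. The only caveat is that your claims are more naturally read off from the covering description in the proof of Proposition \ref{PropRCR} (your stated alternative) than from SR1--SR3, since the substitutional rules prepend letters while $\pi_{n+1}$ deletes the last one, so they do not directly describe the fibres of the covering.
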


\begin{proof}[Proof of Proposition \ref{PropInverseCovering} and Lemma \ref{lemLP}]
Let us look at how the graph $\Gamma_{n+1}$ is obtained from
$\Gamma_n$ under $\pi_{n+1}^{-1}$. Recall that all $\Gamma_n$'s are embedded in the plane.
Consider the decoration
$\mathcal{D}(0^n)$ in $\Gamma_n$. Take two copies of
$\mathcal{D}(0^n)$: the first one is rotated by $-90^{\circ}$
around the origin, whereas the second one is rotated by
$+90^{\circ}$ around the origin. Now split the only
vertex of degree $2$ (which is $0^n$) into two vertices $v_1,v_1'$
for the first copy and $v_2,v_2'$ for the second copy. Then glue
$v_1$ with $v_2$ into a vertex $v$ and $v_1'$ with $v_2'$ into a
vertex $v'$ (see Fig. 5).

\begin{center}
\psfrag{w0}{$w0$}\psfrag{u0}{$u0$}\psfrag{w1}{$w1$}\psfrag{u1}{$u1$}\psfrag{w}{$w$}
\psfrag{u}{$u$}

\psfrag{v=0n+1}{$v=0^{n+1}$}\psfrag{0n}{$0^n$}\psfrag{v'=0n1}{$v'=0^n1$}

\psfrag{v1}{$v_1$}\psfrag{v2}{$v_2$}\psfrag{v1'}{$v_1'$}\psfrag{v2'}{$v_2'$}

\psfrag{D(0n)}{$\mathcal{D}(0^n)$}\psfrag{pin+1-1}{$\pi_{n+1}^{-1}$}\psfrag{Fig.5}{\textbf{Fig. 5.}}

\psfrag{-pi/2}{$-90^{\circ}$}\psfrag{pi/2}{$+90^{\circ}$}
\includegraphics[width=0.8\textwidth]{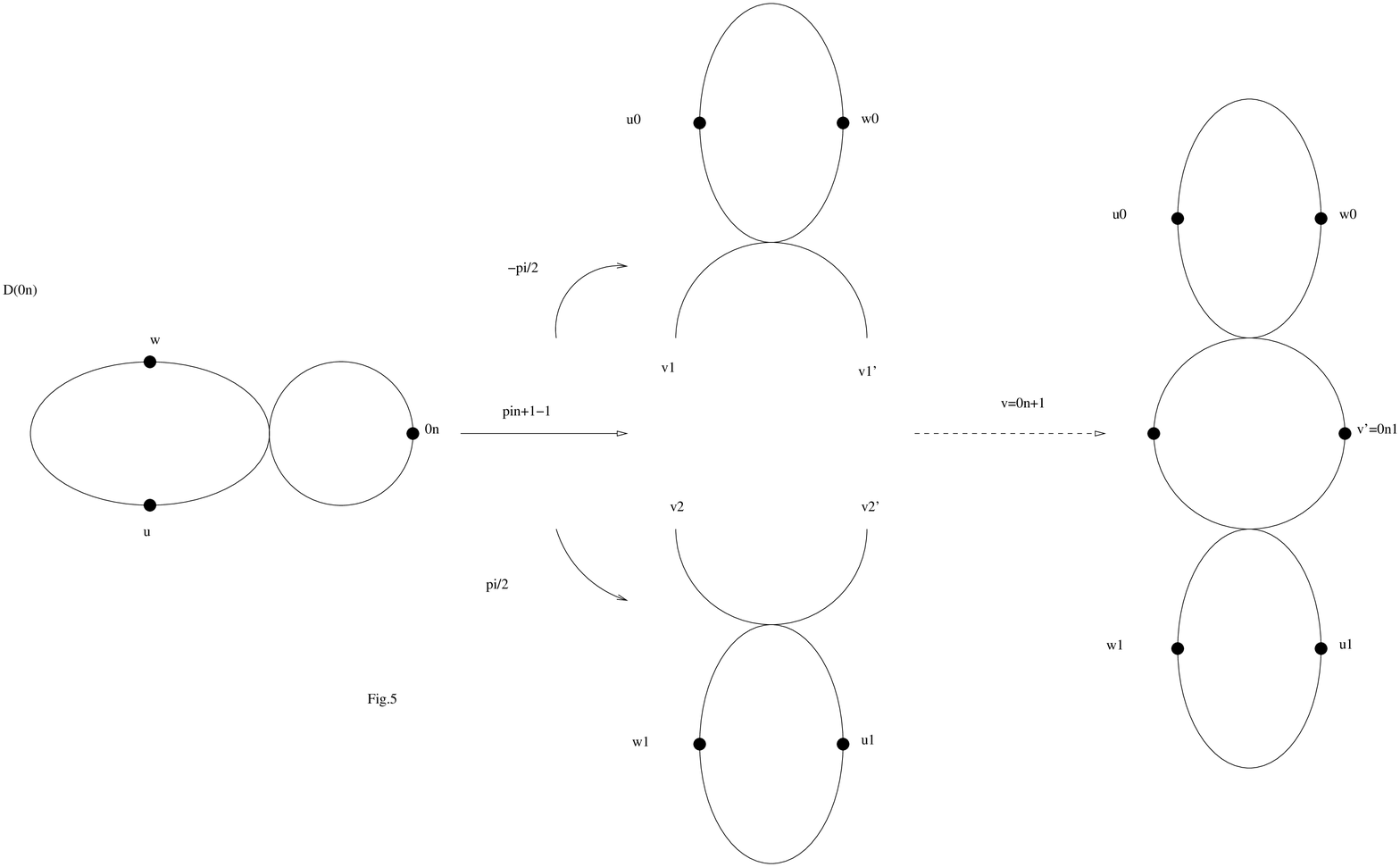}
\end{center}

Finally, after a rotation by $+180^{\circ}$ around the
origin, attach to $v$ a copy of $\mathcal{D}(0^n)^c$ by gluing $v$
with the only vertex of $\mathcal{D}(0^n)^c$ of degree 2 and
attach to $v'$ a copy of $\mathcal{D}(0^n)^c$ (see Fig. 6).

\begin{center}
\psfrag{w0}{$w0$}\psfrag{u0}{$u0$}\psfrag{w1}{$w1$}\psfrag{u1}{$u1$}\psfrag{w}{$w$}
\psfrag{u}{$u$}

\psfrag{0n-11}{$0^{n-1}1$}\psfrag{0n}{$0^n$}\psfrag{0n+1}{$0^{n+1}$}\psfrag{0n1}{$0^n1$}

\psfrag{D(0n)c}{$\mathcal{D}(0^n)^c$}\psfrag{pin+1-1}{$\pi_{n+1}^{-1}$}

\psfrag{pi}{$+180^{\circ}$}\psfrag{Fig.6}{\textbf{Fig. 6.}}

\psfrag{D(0n+1)}{$\mathcal{D}(0^{n+1})$}\psfrag{D(0n1)}{$\mathcal{D}(0^n1)$}

\includegraphics[width=1.0\textwidth]{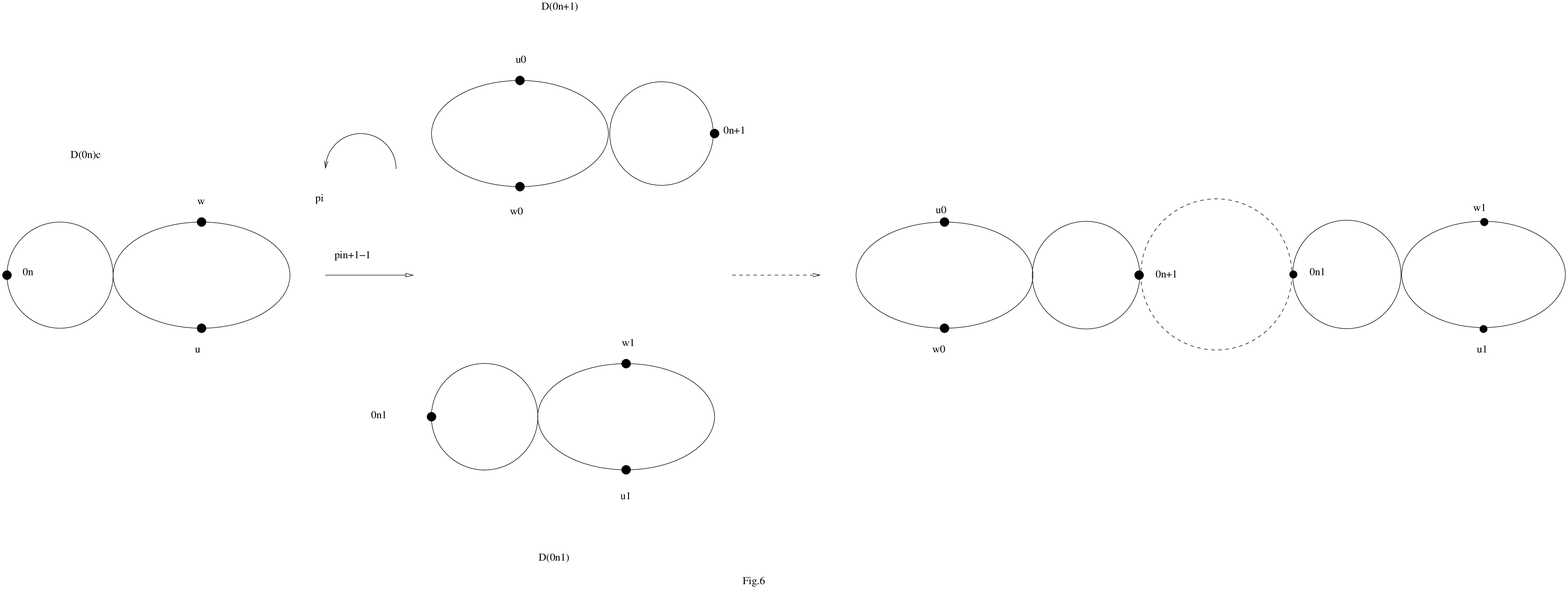}
\end{center}

\indent Accordingly, vertices of $\Gamma_{n+1}$ are labeled as follows:

\begin{itemize}
\item add a $0$ at the right of each vertex of the first copy of $\mathcal{D}(0^n)$ except for vertices $v_1$ and $v_1'$,
\item add a $1$ at the right of each vertex of the second copy of $\mathcal{D}(0^n)$ except for vertices $v_2$ and $v_2'$,
\item put $v_1\equiv v_2\equiv v=0^{n+1}$ and $v_1'\equiv v_2'\equiv
v'=0^n1$,
\item add a $0$ at the right of each vertex of the left copy of $\mathcal{D}(0^n)^c$,
\item add a $1$ at the right of each vertex of the right copy of $\mathcal{D}(0^n)^c$.
\end{itemize}

Thus, if $\xi_n\in\mathcal{D}(0^n)^c$, then
$|\mathcal{CP}_{\xi_{n+1}}|=|\mathcal{CP}_{\xi_n}|+1$ (see Fig.
6). On the other hand, if
$\xi_n\in\mathcal{D}(0^n)\backslash\{0^n\}$, then
$\xi_{n+1}\in\mathcal{D}(0^{n+1})^c$ and
$|\mathcal{CP}_{\xi_{n+1}}|=|\mathcal{CP}_{\xi_n}|-1$ (see Fig.
5). Finally, if $\xi_{n+1}=0^{n+1}$ or $\xi_{n+1}=0^{n}1$, then
obviously, $|\mathcal{CP}_{\xi_{n+1}}|=|\mathcal{CP}_{\xi_n}|=1$.
Proposition \ref{PropInverseCovering} follows.\\
\indent Lemma \ref{lemLP} follows from the above description of $\pi_{n+1}^{-1}$ (see Fig. 5 and Fig. 6).
\end{proof}

\section{Infinite Schreier graphs}

This section and the next one contain the main results of the paper.
Our aim is to give a classification of the infinite Schreier graphs $(\Gamma_{\xi}, \xi)$, $\xi\in \partial T$, of the Basilica group acting on the boundary of the binary tree. \\
\indent In this section, we show that a graph $\Gamma_\xi$ has one, two or four ends (Theorem \ref{totaltheorem}), and classify the graphs with four and two ends
up to isomorphism (Theorems \ref{4isomorfismo}, \ref{xixibarra}, \ref{2isomorfismo}). It happens however that almost every infinite Schreier graph is one-ended (see Subsection \ref{SubsectionRandomLimit} for the discussion), and the one-ended Schreier graphs are classified up to isomorphism in Subsection \ref{section1end} (Theorem \ref{ThmBijection}).\\
\indent Given an infinite graph $\Gamma=(V,E)$, a \textbf{ray} is an infinite sequence of distinct vertices of $\Gamma$ such that any two consecutive vertices of this sequence are adjacent in $\Gamma$. Consider an equivalence relation on the set of rays in $\Gamma$: two rays $\mathcal{R}$ and $\mathcal{R}'$ are equivalent if for any finite set $S\subset V$ both $\mathcal{R}$ and $\mathcal{R}'$ have a tail in the same component of $\Gamma\backslash S$. If two rays are equivalent, and only then, they can be linked by infinitely many disjoint paths. An \textbf{end} is an equivalence class of rays. Note that every infinite, locally finite graph must have at least one end.

\begin{teo}\label{totaltheorem}
Set $E_i = \{\xi \in \{0,1\}^{\omega} \ | \ \textrm{the infinite Schreier graph}\ \Gamma_{\xi} \
\mbox{has }i \mbox{ ends}\}$. Then
\begin{enumerate}
\item  $E_4 = \{w0^{\omega}, w(01)^{\omega}\ |\ \ w\in
X^{\ast}\}$;
\item $E_1 = \{\alpha_1\beta_1\alpha_2\beta_2\ldots, \
\alpha_i,\beta_j\in \{0,1\}\ | \  \textrm{$\{\alpha_i\}_{i\geq 1}$
and $\{\beta_j\}_{j\geq 1}$ both contain infinitely many }1's\}$;
\item $E_2 = \{0,1\}^{\omega} \setminus \left\{E_1\sqcup
E_4\right\}$.
\end{enumerate}
\end{teo}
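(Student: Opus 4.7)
My approach is to analyze the infinite graph $\Gamma_\xi$ via the tree-of-cycles structure it inherits from the finite approximations $(\Gamma_n,\xi_n)$. By Proposition \ref{BasicProp}, every $\Gamma_n$ is a separable graph whose blocks are cycles, so $\Gamma_\xi$ is also a tree of cycles (some possibly of infinite length, i.e., bi-infinite paths). An end of $\Gamma_\xi$ corresponds to an infinite direction in this tree. Throughout the argument I read off the relevant quantitative information from the SEC-sequence of $\xi$ and the cycle-length recursion provided by Proposition \ref{PropInverseCovering} and Lemma \ref{lemLP}.

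For the $E_4$-case I first reduce to a single representative using Proposition \ref{cofinality}. Noting that $w(10)^\omega=(w1)(01)^\omega$, the set $\{w0^\omega,w(01)^\omega:w\in X^*\}$ equals $\text{Cof}(0^\omega)\cup\text{Cof}((01)^\omega)\cup\text{Cof}((10)^\omega)$, which is exactly the orbit $B\cdot 0^\omega$. Since Schreier graphs of points in the same orbit are isomorphic as unrooted graphs, it suffices to show that $\Gamma_{0^\omega}$ has $4$ ends. For $\xi=0^\omega$ the SEC-sequence is $S^\infty$, so $\xi_n=0^n$ lies on the central cycle of $\Gamma_n$ for every $n$ and $|\mathcal{CP}_{\xi_n}|=1$. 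The central cycle has length $2^{\lceil n/2\rceil}\to\infty$ and passes through the root at every finite level; in the limit it becomes a bi-infinite path through the root, accounting for $2$ ends. At the same time, the left decoration $\mathcal{D}(0^n)$ at the root grows unboundedly, because (from the proof of Proposition \ref{PropRCR}) $\mathcal{D}(0^{n+1})\cong\mathcal{D}(0^n)^c$ in $\Gamma_n$, which contains a full copy of the central cycle of $\Gamma_n$. By a self-similar argument, $\mathcal{D}(0^\omega)$ itself hosts a bi-infinite cycle structure contributing $2$ further ends. Since removing the root from $\Gamma_{0^\omega}$ yields exactly two infinite components (as in the finite-level statement of Proposition \ref{BasicProp}), and each is $2$-ended, the total is $4$ ends.

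For $\xi\notin E_4$ I first establish that $|\mathcal{CP}_{\xi_n}|\to\infty$, which pushes the root arbitrarily far from the initial central cycles in the tree-of-cycles structure of $\Gamma_\xi$. The dichotomy between $E_1$ and $E_2$ is then governed by Lemma \ref{lemLP}: whether a new cycle arising in the cycle-path has the same length as the previous one (at odd levels) or double the length (at even levels) is controlled by the parity of the current level $n$, and the positions of the $1$'s in $\xi$ in odd versus even slots dictate the parity pattern encountered by the recursion. If $\xi\in E_1$, the cycle-length sequence $\mathcal{LP}_{\xi_n}$ contains infinitely many doublings, so arbitrarily long cycles appear arbitrarily far from the root; these long cycles create ever-longer alternative routes joining any two rays leaving the root, making all rays cofinal and yielding one end. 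If $\xi\in E_2$, the doublings eventually cease and the cycle-lengths in $\mathcal{LP}_{\xi_n}$ stabilize at some maximum value; $\Gamma_\xi$ then acquires a quasi-linear chain-of-cycles structure extending in two opposite directions, giving $2$ ends.

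The hard part will be the precise dichotomy between the one-ended and two-ended cases: one must show quantitatively that in the $E_1$ case the unboundedly large doubling cycles engulf the previously built finite substructure, so that removing any finite set leaves a unique unbounded component, while in the $E_2$ case the chain of cycles of eventually bounded length is genuinely separated into two unbounded pieces by a sufficiently large finite cut. A preliminary subtle step is establishing $|\mathcal{CP}_{\xi_n}|\to\infty$ exactly for $\xi\notin E_4$, which requires a careful bookkeeping of expansions versus contractions in the SEC-sequence in terms of the binary expansion of $\xi$.
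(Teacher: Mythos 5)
Your overall framework (tree-of-cycles structure, reduction of $E_4$ to $0^\omega$ via cofinality, tracking $\mathcal{CP}_{\xi_n}$ and $\mathcal{LP}_{\xi_n}$) matches the paper's, and the $E_4$ part is essentially the paper's argument, except that you assert without justification that each of the two components obtained by removing the root is \emph{exactly} $2$-ended; to rule out further ends you need the fact that every vertex other than $0^\omega$ carries only a finite decoration (Proposition \ref{PropSeparability} / Proposition \ref{PropMultiple}), which is the step the paper uses to cap the count at four.

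The genuine gap is in the $E_1$/$E_2$ dichotomy, where two of your intermediate claims are false. First, $|\mathcal{CP}_{\xi_n}|\to\infty$ does \emph{not} hold for all $\xi\notin E_4$: for $\xi\in E_2$ the SEC-sequence is eventually an alternation $ECEC\ldots$, so $|\mathcal{CP}_{\xi_n}|$ is eventually bounded (oscillating between $t$ and $t+1$). In fact boundedness versus unboundedness of $|\mathcal{CP}_{\xi_n}|$ is precisely what separates $2$ ends from $1$ end, so the quantity you propose to establish as a common preliminary for all of $E_1\sqcup E_2$ is the very thing that distinguishes the two cases. Second, the mechanism you describe --- doublings of cycle lengths ceasing in the $E_2$ case versus persisting in the $E_1$ case --- is not correct: by Lemma \ref{lemLP} the last cycle of $\mathcal{CP}_{\xi_n}$, i.e.\ the central cycle of $\Gamma_n$, doubles at every even level for \emph{every} $\xi$, so $\mathcal{LP}_{\xi_n}$ never ``stabilizes at some maximum value''; moreover arbitrarily long finite cycles occur in $\Gamma_\xi$ for $\xi\in E_2$ as well, inside the tall decorations hanging off the limit line, so their presence cannot be the distinguishing feature. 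What actually matters is whether the ever-growing central cycle stays at bounded distance from the root (its limit is then a bi-infinite line and $\Gamma_\xi$ has two ends) or recedes to infinity, leaving behind an infinite cycle-path of \emph{finite} cycles whose cut vertices every ray must traverse, forcing one end. The combinatorial criterion governing this is whether $\xi_n$ lies in the right part $\mathcal{D}(0^{n-1}1)$ of $\Gamma_n$ for infinitely many $n$, equivalently whether $\xi$ contains infinitely many subwords $10^k1$ with $k$ even --- which is exactly the condition that both $\{\alpha_i\}$ and $\{\beta_j\}$ contain infinitely many $1$'s. Your proposal does not identify this criterion, and the dichotomy it does propose would not yield the sets $E_1$ and $E_2$ as stated.
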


\indent This theorem will be proven in the next subsection. \\
\indent Observe that, by Proposition \ref{cofinality},  $E_4$ consists of exactly one orbit,
whereas $E_1$ and $E_2$ consist of infinitely many orbits. More precisely, our classification results proven below (Theorems \ref{totaltheorem}, \ref{4isomorfismo}, \ref{xixibarra}, \ref{ThmBijection}) imply the following:

\begin{cor} \label{ThmClassif}

\begin{enumerate}
\item There exists only one class of isomorphism of 4-ended (unrooted) infinite Schreier graphs. It contains a single orbit.
\item There exist uncountably many classes of isomorphism of 2-ended (unrooted) infinite Schreier graphs. Each of these classes contains exactly two orbits.
\item There exist uncountably many classes of isomorphism of 1-ended (unrooted) infinite Schreier graphs. The isomorphism class of $\Gamma_{1^\omega}$ is a single orbit, and every other class contains uncountably many orbits.
\end{enumerate}
\end{cor}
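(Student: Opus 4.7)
The plan is to obtain each item of Corollary \ref{ThmClassif} as a bookkeeping consequence of the previously stated theorems: Theorem \ref{totaltheorem} partitions $\partial T$ into $E_1\sqcup E_2\sqcup E_4$ by the number of ends, Theorems \ref{4isomorfismo}, \ref{xixibarra}, \ref{2isomorfismo} and \ref{ThmBijection} give the isomorphism classification inside each $E_i$, and Proposition \ref{cofinality} identifies $B$-orbits with cofinality classes. I would first prove item 1; then item 2; then item 3, in that order, since the obstacles grow with the number of isomorphism classes.

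For item 1, the strategy is to observe that, by Theorem \ref{totaltheorem},
\[
E_4=\{w0^\omega,w(01)^\omega\mid w\in X^*\}.
\]
Every word $w0^\omega$ is cofinal with $0^\omega$, while every word $w(01)^\omega$ is cofinal either with $(01)^\omega$ or with $(10)^\omega$, depending on the parity of $|w|$. Thus $E_4=\mathrm{Cof}(0^\omega)\cup\mathrm{Cof}((01)^\omega)\cup\mathrm{Cof}((10)^\omega)$, and by Proposition \ref{cofinality} this is a single $B$-orbit. Theorem \ref{4isomorfismo} will give that all $\Gamma_\xi$ with $\xi\in E_4$ are mutually isomorphic; combining these two facts yields item 1.

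For item 2, I will use Theorem \ref{2isomorfismo}, which I expect characterizes isomorphism of $\Gamma_\xi$ and $\Gamma_\eta$ for $\xi,\eta\in E_2$ in terms of the involution $\xi\mapsto\bar\xi$ supplied by Theorem \ref{xixibarra}: specifically, $\Gamma_\xi\cong\Gamma_\eta$ iff $\eta\in B\cdot\xi\cup B\cdot\bar\xi$, together with the assertion that $B\cdot\xi\neq B\cdot\bar\xi$. Since by Proposition \ref{cofinality} every orbit in $E_2$ is a single cofinality class (each hence countable), and since $E_2$ has cardinality of the continuum, there are uncountably many orbits in $E_2$. Pairing them via $\xi\leftrightarrow\bar\xi$ produces uncountably many isomorphism classes, each containing exactly two orbits.

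For item 3, the core of the argument is Theorem \ref{ThmBijection}, which I expect furnishes a bijection between isomorphism classes of $1$-ended Schreier graphs and some explicit uncountable set of combinatorial invariants. This already gives uncountably many classes. The word $1^\omega$ is fixed by the generator $a$ (and its orbit is exactly $\mathrm{Cof}(1^\omega)$ by Proposition \ref{cofinality}) and, as an exceptional configuration, will correspond to a single point of the invariant set; hence its isomorphism class consists of the unique orbit $B\cdot 1^\omega$. For any other value of the invariant, one must exhibit uncountably many pairwise non-cofinal words realizing it; this is the substantive point, and it is precisely what Theorem \ref{ThmBijection} delivers once translated to the level of orbits via Proposition \ref{cofinality}.

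The principal obstacle is not in the corollary itself, which is essentially bookkeeping, but in ensuring that the orbit counts within each isomorphism class are extracted correctly from Theorems \ref{4isomorfismo}, \ref{xixibarra}, \ref{2isomorfismo} and \ref{ThmBijection}: one must verify in item 2 that $\xi$ and $\bar\xi$ never lie in the same $B$-orbit, and in item 3 that the invariant distinguishing $1$-ended graphs admits an uncountable fiber over all values except that corresponding to $1^\omega$. Both verifications reduce to a careful inspection of the cofinality classes involved.
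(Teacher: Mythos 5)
Your proposal is correct and follows essentially the same route as the paper, which states the corollary as a direct consequence of Theorems \ref{totaltheorem}, \ref{4isomorfismo}, \ref{xixibarra} and \ref{ThmBijection} together with Proposition \ref{cofinality} (note only that the isomorphism criterion for $E_2$ is Theorem \ref{xixibarra}, not Theorem \ref{2isomorfismo}, which instead gives the explicit model $\Gamma(\xi)$). The two verifications you flag do go through as you predict: $\xi$ and $\overline{\xi}$ differ in infinitely many letters, hence lie in distinct orbits, and condition (c) of Theorem \ref{ThmBijection} permits independent complementation of infinitely many nonempty blocks, yielding uncountably many pairwise non-cofinal words in each one-ended class other than that of $\Gamma_{1^\omega}$.
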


\indent See also Propositions \ref{unifmeasure} and \ref{classemisuranulla} for a measurable classification of infinite Schreier graphs $\{\Gamma_\xi\ |\ \xi\in\partial T\}$.\\
\indent We end this subsection with an infinite analogue of Propositions \ref{BasicProp} and \ref{PropMultiple} about the structure of the Basilica Schreier graphs:

\begin{prop} \label{PropSeparability}
\begin{enumerate}
\item For any $\xi\in\{0,1\}^\omega$, the infinite orbital Schreier graph $\Gamma_{\xi}$ is separable and its blocks are either cycles or single edges.
\item For any $\eta\in V(\Gamma_{\xi})$ ($\eta\neq 0^\omega$), removing $\eta$ splits $\Gamma_\xi$ into several components among which one, denoted by $U$, is finite. Then, the subgraph induced by $V(U)\cup\{\eta\}$ is isomorphic to a $k$-decoration for some $k\geq 1$.
\item The decoration of $\eta$ is a $k$-decoration if and only if while reading $\eta$ from the left, we first encounter $1$ in the $k$-th position.
\end{enumerate}
\end{prop}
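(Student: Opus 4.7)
The plan is to exploit the pointed Gromov--Hausdorff convergence $(\Gamma_n,\eta_n)\to(\Gamma_\xi,\eta)$, which holds for every $\eta\in V(\Gamma_\xi)=B\cdot\xi$ by the argument of Subsection \ref{subsecSchreierGraphs} applied to $\eta$ in place of $\xi$: for each $r>0$ there is $N(r,\eta)$ such that $B_{\Gamma_n}(\eta_n,r)\cong B_{\Gamma_\xi}(\eta,r)$ as rooted graphs for all $n\ge N(r,\eta)$, where $\eta_n$ denotes the length-$n$ prefix of $\eta$. Through such ball isomorphisms one transfers the combinatorial structure described in Propositions \ref{BasicProp} and \ref{PropMultiple} from the finite $\Gamma_n$ to $\Gamma_\xi$.

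I would dispatch Part 3 first. Let $k\ge 1$ be the position of the first $1$ in $\eta$. For every $n\ge k$ the prefix $\eta_n$ begins with $0^{k-1}1$, so by Proposition \ref{PropMultiple}(2) the decoration $\mathcal{D}(\eta_n)\subset\Gamma_n$ is a $k$-decoration, and by Proposition \ref{PropMultiple}(3) the projection $\pi_{n+1}$ sends $\mathcal{D}(\eta_{n+1})$ bijectively onto $\mathcal{D}(\eta_n)$. In particular, all these decorations are graph-isomorphic to $\mathcal{D}(0^k)\subset\Gamma_k$ and have the common finite cardinality $|V(\mathcal{D}(0^k))|$, independent of $n$.

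For Part 2 I would choose $r$ larger than $\mathrm{diam}(\mathcal{D}(0^k))$ and fix any $n\ge N(r,\eta)$. The ball isomorphism carries $\mathcal{D}(\eta_n)$ onto a finite subgraph $\widetilde{\mathcal{D}}(\eta)\subset\Gamma_\xi$ containing $\eta$ and isomorphic to a $k$-decoration. If $k=1$ this is just a loop at $\eta$; if $k\ge 2$, then $\eta_n$ is by Proposition \ref{BasicProp} the unique cut vertex separating $\mathcal{D}(\eta_n)$ from the rest of $\Gamma_n$, a local property that survives under the ball isomorphism. Hence $U:=\widetilde{\mathcal{D}}(\eta)\setminus\{\eta\}$ is indeed a (finite) component of $\Gamma_\xi\setminus\{\eta\}$, and $V(U)\cup\{\eta\}$ induces the claimed $k$-decoration.

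Part 1 then follows. Separability is immediate from Part 2: by spherical transitivity the orbit $B\cdot\xi$ meets every cylinder $0^{k-1}1\{0,1\}^\omega$ with $k\ge 2$, exhibiting infinitely many cut vertices. For the block structure, any finite cycle of $\Gamma_n$ lying inside a decoration has length at most $2^{\lceil k/2\rceil}$ (Proposition \ref{BasicProp}) and is transferred by the ball isomorphism to a finite cycle block of $\Gamma_\xi$. The only other blocks of $\Gamma_n$ are the cycles of the central part, whose lengths $2^{\lceil n/2\rceil}$ diverge; I expect them to yield two-way infinite paths in $\Gamma_\xi$, whose individual edges are then bridges and hence single-edge blocks. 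Checking this last claim is the main obstacle: one must rule out that such a growing cycle folds back on itself in the limit. This is precisely what the SEC-sequence analysis of Subsection \ref{SubsecCovering} controls, since the length $|\mathcal{CP}_{\xi_n}|$ (for $\xi$ outside the exceptional cofinality class of $0^\omega$) is eventually unbounded, which prevents two distinct central-cycle vertices from being identified in the limit.
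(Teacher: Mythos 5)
Your proposal follows essentially the same route as the paper: transfer the combinatorial structure of Propositions \ref{BasicProp} and \ref{PropMultiple} from the finite graphs to $\Gamma_\xi$ through the ball isomorphisms $B_{\Gamma_n}(\xi_n,r)\simeq B_{\Gamma_\xi}(\xi,r)$ provided by pointed Gromov--Hausdorff convergence. Parts 2 and 3 are handled exactly as in the paper. The one place where you diverge is the ``main obstacle'' you flag at the end of Part 1, and there your proposed fix is both unnecessary and incorrect as stated: the length $|\mathcal{CP}_{\xi_n}|$ of the cycle-path is \emph{not} eventually unbounded for every $\xi$ outside $Cof(0^\omega)$ --- for $\xi\in E_2$ it is eventually constant (you are conflating the number of cycles in the cycle-path with the lengths of the individual cycles). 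No appeal to the SEC-sequence is needed: the worry that a growing central cycle ``folds back on itself'' is ruled out by locality alone. Indeed, if some edge $e$ of $\Gamma_\xi$ lay on a cycle $C$ of finite length $\ell$, then $C$ would be contained in a ball $B_{\Gamma_\xi}(\xi,r)$, hence would appear in $B_{\Gamma_n}(\xi_n,r)$ for all large $n$; but in $\Gamma_n$ every cycle is a block, so $\ell$ would have to equal the length of the block-cycle of $\Gamma_n$ through the corresponding edge, which diverges --- a contradiction. (The same locality argument, via a finite theta-subgraph, excludes any $2$-connected block that is not a cycle.) This is exactly the two-line argument the paper uses for Part 1.
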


\begin{proof}
\emph{Part 1.} Since the sequence of finite rooted graphs $(\Gamma_n,\xi_n)$ converges to $(\Gamma_{\xi},\xi)$, we have that for all $r\geq 1$, there exists $N\geq 1$ such that for all $n\geq N$, $B_{\Gamma_n}(\xi_n,r)\simeq B_{\Gamma_{\xi}}(\xi,r)$. But the subgraph $B_{\Gamma_n}(\xi_n,r)$ of $\Gamma_n$ is separable and its blocks are either cycles or single edges. Since $\lim_{r\to\infty}B_{\Gamma_{\xi}}(\xi,r)=(\Gamma_{\xi},\xi)$, the statement follows.\\
\indent \emph{Parts 2. and 3.} Let $\eta\in V(\Gamma_{\xi})$ ($\eta\neq 0^\omega$) and let $\eta_k$ be the shortest prefix of $\eta$ ending by $1$. By  Proposition \ref{PropMultiple}, the decoration of $\eta_n$ in $\Gamma_n$ is a $k$-decoration for every $n\geq k+1$. If $d\equiv d_{\Gamma_\xi}(\eta,\xi)$ denotes the usual graph distance in $\Gamma_\xi$ between $\xi$ and $\eta$, then there exists a radius $r_{(d,k)}$ and a $N(r_{(d,k)})\geq 1$ such that for all $n\geq N(r_{(d,k)})$, the ball $B_{\Gamma_n}(\xi_n,r_{(d,k)})$ contains $\eta_n$ together with its decoration and $B_{\Gamma_n}(\xi_n,r_{(d,k)})\simeq B_{\Gamma_{\xi}}(\xi,r_{(d,k)})$.
\end{proof}

\subsection{Limit graph with four ends and Proof of Theorem \ref{totaltheorem}}\label{section4ends}

\indent In this subsection, we determine for which infinite words $\xi\in\{0,1\}^\omega$ the corresponding orbital infinite Schreier
graph $\Gamma_{\xi}$ has four ends, and we study the shape of these graphs.\\
\indent Let $\xi\in\{0,1\}^{\omega}$. By Proposition \ref{BasicProp}, for any $n>1$, $\xi_n$ belongs to exactly two cycles of $\Gamma_n$ denoted by $C(\xi_n)$ and $C'(\xi_n)$; $C(\xi_n)$ is contained in $\mathcal{D}(\xi_n)$ whereas $C'(\xi_n)$ is contained in $\mathcal{D}(\xi_n)^c$. Finally, write $l_n(\xi):=|C(\xi_n)|$ and $l_n'(\xi):=|C'(\xi_n)|$.
We have the following:

\begin{lem}\label{lemmazeroomega}
Given $\xi\in\{0,1\}^{\omega}$, the sequence $\{l_n(\xi)\}_{n\geq 1}$ diverges as $n\to \infty$ if and only if $\xi=0^\omega$.
\end{lem}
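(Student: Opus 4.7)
The plan is to treat the two implications separately, exploiting the recursive structure of decorations already developed in the paper.

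For the ``if'' direction, when $\xi=0^{\omega}$ the prefix is $\xi_n=0^n$ and divergence follows from an explicit length computation. In the proof of Proposition \ref{PropRCR} it is shown that the covering projection $\pi_n$ restricts to a graph isomorphism from $\mathcal{D}(0^n)$ onto $\mathcal{D}(0^{n-1})^c\subset \Gamma_{n-1}$, sending $0^n$ to $0^{n-1}$. The cycle $C(0^n)$ is therefore mapped bijectively to the unique cycle of $\mathcal{D}(0^{n-1})^c$ containing $0^{n-1}$, which is precisely the central cycle of $\Gamma_{n-1}$. By Proposition \ref{BasicProp} its length is $2^{\lceil (n-1)/2\rceil}$, so $l_n(0^{\omega})=2^{\lceil (n-1)/2\rceil}\to\infty$.

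For the ``only if'' direction, suppose $\xi\neq 0^{\omega}$ and let $k$ be the smallest index with $\xi_k=1$. For each $n\geq k$ the first $1$ of $\xi_n$ occurs in position $k$, so Proposition \ref{PropMultiple}(2) yields an (abstract) graph isomorphism $\mathcal{D}(\xi_n)\cong \mathcal{D}(0^k)$. The crucial step is to upgrade this to a rooted isomorphism sending $\xi_n$ to $0^k$: because $\Gamma_n$ is $4$-regular (Proposition \ref{BasicProp}) and every vertex of $\mathcal{D}(\xi_n)$ other than $\xi_n$ lies in the component of $\Gamma_n\setminus\{\xi_n\}$ disjoint from $0^n$, all four incident edges of such an interior vertex remain inside the decoration, whereas exactly two of the four edges incident to $\xi_n$ leave the decoration. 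Hence $\xi_n$ is the unique vertex of internal degree $2$ in $\mathcal{D}(\xi_n)$, and the analogous statement characterises $0^k$ inside $\mathcal{D}(0^k)$; any graph isomorphism, preserving degrees, must therefore match these two vertices. It follows that $C(\xi_n)$ is sent to $C(0^k)$, so $l_n(\xi)=|C(0^k)|$ is constant in $n\geq k$ and the sequence is bounded. The degenerate case $k=1$, where $\xi_n$ carries a loop and $\mathcal{D}(\xi_n)$ reduces to a single looped vertex, is immediate: $l_n(\xi)=1$ for all $n\geq 1$.

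The only delicate point is the rooted matching of decorations, which boils down to the degree-$2$ characterisation of the boundary vertex stated above; once that is in hand, both implications follow directly from the structural results already established in Section \ref{finiteSch}.
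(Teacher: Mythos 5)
Your proof is correct and follows essentially the same route as the paper's: an explicit computation of $l_n(0^\omega)=2^{\lceil(n-1)/2\rceil}$ for the forward direction, and for the converse the observation via Proposition~\ref{PropMultiple}(2) that the decoration of $\xi_n$ stabilises as a $k$-decoration, forcing $l_n(\xi)$ to be eventually constant. Your extra step identifying the root of a decoration as its unique internal-degree-$2$ vertex merely makes explicit a point the paper leaves implicit.
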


\begin{proof}
It follows from SR that $l_n(0^\omega)=2^{\lceil\frac{n-1}{2}\rceil}$ and $l_n'(0^\omega)=2^{\lceil\frac{n}{2}\rceil}$ for every $n\geq 1$. Conversely, consider $\eta\in\{0,1\}^{\omega}$ such that $\eta\neq 0^\omega$. Then $\eta=0^k1\eta'$ for some $k\geq 0$ and it follows from Proposition \ref{PropMultiple}, Part \emph{2.}, that a $(k+1)$-decoration is
attached to $\eta_n$ for every $n\geq k+1$. Hence, $l_n(\eta)$ is constant for each $n\geq k+1$.
\end{proof}

\indent The next lemma describes the decorations attached to $C(0^n)$ and $C'(0^n)$.

\begin{lem} \label{LemDistDec}
For $n\geq 2$, consider the graph $\Gamma_n$. Then, for $0\leq k\leq \lceil\frac{n-2}{2}\rceil$, to every vertex situated on $C'(0^n)$ at distance $2^k+l\cdot 2^{k+1}$ for some $l\geq 0$ from $0^n$ is attached a $(2k+1)$-decoration if $n$ is odd and a $(2k+2)$-decoration if $n$ is even. On the other hand, for $0\leq k\leq \lfloor\frac{n-2}{2}\rfloor$, to every vertex situated on $C(0^n)$ at distance $2^k+l\cdot 2^{k+1}$ for some $l\geq 0$ from $0^n$ is attached a $(2k+2)$-decoration if $n$ is odd and a $(2k+1)$-decoration if $n$ is even.
\end{lem}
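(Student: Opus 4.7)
The claim is naturally proved by induction on $n$, exploiting the fact that the diagram $\overline{D}_n$ introduced above encodes precisely the heights of decorations along the central cycle $C'(0^n)$. Writing $L_n = 2^{\lceil n/2\rceil}$ for the length of $C'(0^n)$ and labelling the vertices of $\overline{D}_n$ by positions $0, 1, \ldots, L_n$ (the endpoints, both equal to $0^n$, sit at $0$ and $L_n$), I will show that the label $\lambda_n(p)$ attached to position $p$ satisfies
\[
\lambda_n(p) \;=\; 2\,v_2(p) + \delta_n \qquad \text{for every } 1 \le p \le L_n - 1,
\]
where $v_2$ is the $2$-adic valuation and $\delta_n = 1$ if $n$ is odd, $\delta_n = 2$ if $n$ is even. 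Once this is established, a vertex of $C'(0^n)$ at graph distance $d$ from $0^n$ occupies either position $d$ or position $L_n - d$, and since $L_n$ is a power of $2$ these two positions share the same $v_2$-value; the condition $d = 2^k + l\cdot 2^{k+1}$ is equivalent to $v_2(d) = k$, while the range $1 \le d \le L_n/2$ corresponds to $0 \le k \le \lceil n/2\rceil - 1 = \lceil (n-2)/2\rceil$. Plugging in yields the decoration height $2k + \delta_n$ claimed for $C'(0^n)$.

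For the induction, the base cases $n = 1, 2$ are immediate from the pictures of $\overline{D}_1$ and $\overline{D}_2$. For $n \ge 3$, Proposition \ref{PropRCR} writes $\overline{D}_n$ as two copies of $D_{n-2}$ flanked and separated by three vertices labeled $n$ sitting at positions $0$, $L_n/2$, and $L_n$. For $1 \le p \le L_n/2 - 1$ the position lies in the left copy of $D_{n-2}$, so $\lambda_n(p) = \lambda_{n-2}(p) = 2v_2(p) + \delta_{n-2} = 2v_2(p) + \delta_n$ by the inductive hypothesis and equal parity. For $p = L_n/2 = 2^{\lceil n/2\rceil - 1}$ the prescribed label $n$ coincides with $2(\lceil n/2\rceil - 1) + \delta_n$, as one checks for each parity. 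For $L_n/2 < p \le L_n - 1$, writing $p = L_n/2 + j$ with $1 \le j \le L_n/2 - 1$, the elementary identity $v_2(2^a + j) = v_2(j)$ whenever $0 < j < 2^a$ combined with $\lambda_n(p) = \lambda_{n-2}(j) = 2v_2(j) + \delta_n$ gives $\lambda_n(p) = 2v_2(p) + \delta_n$. This closes the induction.

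It remains to transfer the statement to $C(0^n)$. The description of $\pi_n^{-1}$ in the proof of Proposition \ref{PropInverseCovering} shows that $\mathcal{D}(0^n) \subset \Gamma_n$ is a copy of $\mathcal{D}(0^{n-1})^c \subset \Gamma_{n-1}$, attached at $0^n$ via identification with the only degree-$2$ vertex $0^{n-1}$ of $\mathcal{D}(0^{n-1})^c$. Under this rooted isomorphism, the cycle $C(0^n)$, which is the unique cycle of $\mathcal{D}(0^n)$ through $0^n$, corresponds to the central cycle $C'(0^{n-1})$ of $\Gamma_{n-1}$ through $0^{n-1}$, together with all of its attached decorations. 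Applying the already-proven claim for $C'(0^{n-1})$, the decoration at distance $d = 2^k + l\cdot 2^{k+1}$ from $0^n$ along $C(0^n)$ has height $2k + \delta_{n-1}$, that is, $2k + 2$ when $n$ is odd and $2k + 1$ when $n$ is even, for $0 \le k \le \lceil (n-1)/2\rceil - 1 = \lfloor (n-2)/2\rfloor$, as required.

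The only substantive point is the last one: one must make sure that the isomorphism $\mathcal{D}(0^n) \cong \mathcal{D}(0^{n-1})^c$ produced by $\pi_n^{-1}$ sends $0^n$ to $0^{n-1}$, so that $C(0^n)$ identifies with $C'(0^{n-1})$ together with its full decoration pattern. Once this is pinned down, the parity swap between the two halves of the lemma (\emph{i.e.} $2k+1$ versus $2k+2$) is nothing but the shift $n \mapsto n-1$, and the remainder of the proof is routine $2$-adic bookkeeping encoded by the recursive rule RCR.
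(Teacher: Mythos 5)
Your proof is correct and follows essentially the same route as the paper's: the paper's own argument is precisely the observation that $C'(0^n)$ with its decorations is encoded by $D_n$ and $C(0^n)$ with its decorations by $D_{n-1}$ (via the identification of $\mathcal{D}(0^n)$ with $\mathcal{D}(0^{n-1})^c$), followed by induction on $n$ using the RCR. Your explicit $2$-adic formula $\lambda_n(p)=2v_2(p)+\delta_n$ is just a clean way of packaging that induction, and your reduction of the $C(0^n)$ case to the $C'(0^{n-1})$ case is exactly the step the paper invokes.
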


\begin{proof}
Observe that the cycle $C'(0^n)$ (which is the central cycle of $\Gamma_n$) and the decorations attached to the vertices of $C'(0^n)$ are encoded by the diagram $D_n$ (see Subsection \ref{finiteSch}). Similarly, the cycle $C(0^n)$ and the decorations attached to the vertices of $C(0^n)$ are encoded by the diagram $D_{n-1}$. Induction on $n$ and the RCR yield the statement.
\end{proof}

\begin{proof}[Proof of Theorem \ref{totaltheorem}, Part 1]
Consider the vertex $0^n$ in $\Gamma_n$ together with its neighbours. Consider the following four disjoint paths: each of them starts at a different neighbour of $0^n$. The two first are included in $C(0^n)$ and they are of length $l_n(0^\omega)/2-1$ whereas the two others are included in $C'(0^n)$ and they are of length $l'_n(0^\omega)/2-1$.
Since $l_n(0^\omega)=2^{\lceil\frac{n-1}{2}\rceil}$ and $l_n'(0^\omega)=2^{\lceil\frac{n}{2}\rceil}$, their respective lengths diverge as $n\to\infty$. Since $\lim_{n\to\infty}(\Gamma_n,0^n)=(\Gamma_{0^\omega},0^\omega)$, the corresponding four rays are disjoint in $(\Gamma_{0^\omega},0^\omega)$ so that they are not equivalent. Hence, the graph
$(\Gamma_{0^{\omega}}, 0^{\omega})$ has at least four ends.\\
\indent Consider now $\xi\in V(\Gamma_{0^\omega})$, $\xi\neq 0^\omega$. Then $\xi=0^k1\xi'$ for some $k\geq 0$; by Proposition \ref{PropMultiple}, a $(k+1)$-decoration is attached to $\xi_n$ for every $n\geq k+1$. Since every such
decoration is finite, any ray must have infinitely many vertices in common with one of the four previous rays. We thus
conclude that the graph $(\Gamma_{0^{\omega}}, 0^{\omega})$ has exactly four ends.\\
\indent Finally, Lemma \ref{lemmazeroomega} implies that the set of infinite words $\xi$ such that the orbital Schreier graph $\Gamma_\xi$ has four ends is exactly constituted by the orbit of $0^{\omega}$. Then the result follows from Proposition \ref{cofinality}.
\end{proof}

\indent We describe now a 4-regular infinite separable graph $\Gamma_{(4)}$ whose blocks are either cycles or single
edges. The number of ends of $\Gamma_{(4)}$ is 4. It will turn out that $\Gamma_{(4)}$ is isomorphic (as an unrooted graph) to $\Gamma_{\xi}$ for each $\xi \in E_4$. $\Gamma_{(4)}$ is constructed as follows: consider two copies, $\mathcal{R}_1$ and $\mathcal{R}_2$, of the double ray whose vertices are naturally identified with the integers. Let these two double rays intersect at vertex $0$. For every $k\geq 0$, we define the subset of
$\mathbb{Z}$
$$
A_k:=\left\{n\in\mathbb{Z}|n\equiv 2^k\mod 2^{k+1}\right\}.
$$
\noindent For every $k\geq 0$, attach to each vertex of $A_k$ in $\mathcal{R}_1$ (respectively in $\mathcal{R}_2$) a $(2k+1)$-decoration (respectively a $(2k+2)$-decoration) by its unique vertex of degree $2$.

\unitlength=0,3mm
\begin{center}
\begin{picture}(400,370)
\setloopdiam{4}
\letvertex A=(200,170)\letvertex B=(235,170)\letvertex C=(270,170)\letvertex D=(305,170)\letvertex E=(340,170)\letvertex F=(375,170)\letvertex G=(400,170)
\letvertex H=(200,135)\letvertex I=(200,100)\letvertex L=(200,65)\letvertex M=(200,40)\letvertex N=(165,170)\letvertex O=(130,170)
\letvertex P=(95,170)\letvertex Q=(70,170)\letvertex R=(200,205)\letvertex S=(200,240)\letvertex T=(200,275)\letvertex U=(200,310)\letvertex V=(200,345)
\letvertex Z=(200,370)\letvertex J=(145,185)\letvertex K=(130,200)\letvertex X=(130,220)\letvertex W=(95,190)\letvertex Y=(130,240)
\letvertex a=(220,135)\letvertex b=(215,115)\letvertex c=(215,85)\letvertex d=(230,100)\letvertex e=(250,100)\letvertex f=(270,100)\letvertex g=(220,65)
\letvertex h=(270,150)\letvertex i=(270,130)\letvertex l=(305,155)\letvertex m=(325,155)\letvertex n=(355,155)\letvertex o=(375,155)\letvertex p=(340,140)
\letvertex q=(325,125)\letvertex r=(355,125)\letvertex s=(340,110)\letvertex t=(340,90)\letvertex u=(340,70)\letvertex v=(165,190)
\letvertex z=(115,185)\letvertex j=(180,240)\letvertex k=(160,240)\letvertex x=(185,345)\letvertex w=(185,295)\letvertex y=(185,325)
\letvertex aa=(185,275)\letvertex bb=(170,310)\letvertex cc=(155,325)\letvertex dd=(155,295)\letvertex ee=(140,310)
\letvertex ff=(120,310)\letvertex gg=(100,310)
\put(155,20){\textbf{Fig. 7.} A finite part of $\Gamma_{(4)}$.}

\drawvertex(A){$\bullet$}\drawvertex(B){$\bullet$}\drawvertex(C){$\bullet$}\drawvertex(D){$\bullet$}\drawvertex(E){$\bullet$}\drawvertex(F){$\bullet$}
\drawvertex(H){$\bullet$}\drawvertex(I){$\bullet$}\drawvertex(L){$\bullet$}\drawvertex(N){$\bullet$}
\drawvertex(O){$\bullet$}\drawvertex(P){$\bullet$}\drawvertex(R){$\bullet$}\drawvertex(S){$\bullet$}\drawvertex(T){$\bullet$}
\drawvertex(U){$\bullet$}\drawvertex(V){$\bullet$}\drawvertex(X){$\bullet$}\drawvertex(Y){$\bullet$}\drawvertex(W){$\bullet$}\drawvertex(J){$\bullet$}
\drawvertex(K){$\bullet$}\drawvertex(a){$\bullet$}\drawvertex(b){$\bullet$}\drawvertex(c){$\bullet$}\drawvertex(d){$\bullet$}\drawvertex(e){$\bullet$}
\drawvertex(f){$\bullet$}\drawvertex(g){$\bullet$}\drawvertex(h){$\bullet$}\drawvertex(i){$\bullet$}\drawvertex(l){$\bullet$}\drawvertex(m){$\bullet$}
\drawvertex(n){$\bullet$}\drawvertex(o){$\bullet$}\drawvertex(p){$\bullet$}\drawvertex(q){$\bullet$}\drawvertex(r){$\bullet$}\drawvertex(s){$\bullet$}
\drawvertex(t){$\bullet$}\drawvertex(u){$\bullet$}\drawvertex(v){$\bullet$}\drawvertex(z){$\bullet$}\drawvertex(x){$\bullet$}\drawvertex(y){$\bullet$}
\drawvertex(w){$\bullet$}\drawvertex(j){$\bullet$}\drawvertex(k){$\bullet$}\drawvertex(aa){$\bullet$}\drawvertex(bb){$\bullet$}\drawvertex(cc){$\bullet$}
\drawvertex(dd){$\bullet$}\drawvertex(ee){$\bullet$}\drawvertex(ff){$\bullet$}\drawvertex(gg){$\bullet$}

\drawundirectededge(A,B){}\drawundirectededge(B,C){}\drawundirectededge(C,D){}\drawundirectededge(D,E){}\drawundirectededge(E,F){}
\drawundirectededge(F,G){}\drawundirectededge(A,H){}\drawundirectededge(H,I){}\drawundirectededge(I,L){}\drawundirectededge(L,M){}
\drawundirectededge(A,N){}\drawundirectededge(N,O){}\drawundirectededge(O,P){}\drawundirectededge(P,Q){}\drawundirectededge(A,R){}
\drawundirectededge(R,S){}\drawundirectededge(S,T){}\drawundirectededge(T,U){}\drawundirectededge(U,V){}\drawundirectededge(V,Z){}
\drawundirectededge(I,b){}\drawundirectededge(b,d){}\drawundirectededge(d,c){}\drawundirectededge(c,I){}\drawundirectededge(E,n){}
\drawundirectededge(n,p){}\drawundirectededge(p,m){}\drawundirectededge(m,E){}\drawundirectededge(p,r){}\drawundirectededge(r,s){}
\drawundirectededge(s,q){}\drawundirectededge(q,p){}\drawundirectededge(O,z){}\drawundirectededge(z,K){}\drawundirectededge(K,J){}
\drawundirectededge(J,O){}\drawundirectededge(U,w){}\drawundirectededge(w,bb){}\drawundirectededge(bb,y){}\drawundirectededge(y,U){}
\drawundirectededge(bb,dd){}\drawundirectededge(dd,ee){}\drawundirectededge(ee,cc){}\drawundirectededge(cc,bb){}

\drawundirectedcurvededge(a,H){}\drawundirectedcurvededge(H,a){}\drawundirectedcurvededge(d,e){}\drawundirectedcurvededge(e,d){}
\drawundirectedcurvededge(e,f){}\drawundirectedcurvededge(f,e){}\drawundirectedcurvededge(L,g){}\drawundirectedcurvededge(g,L){}
\drawundirectedcurvededge(C,h){}\drawundirectedcurvededge(h,C){}\drawundirectedcurvededge(h,i){}\drawundirectedcurvededge(i,h){}
\drawundirectedcurvededge(l,m){}\drawundirectedcurvededge(m,l){}\drawundirectedcurvededge(n,o){}\drawundirectedcurvededge(o,n){}
\drawundirectedcurvededge(s,t){}\drawundirectedcurvededge(t,s){}\drawundirectedcurvededge(t,u){}\drawundirectedcurvededge(u,t){}
\drawundirectedcurvededge(N,v){}\drawundirectedcurvededge(v,N){}\drawundirectedcurvededge(K,X){}\drawundirectedcurvededge(X,K){}
\drawundirectedcurvededge(X,Y){}\drawundirectedcurvededge(Y,X){}\drawundirectedcurvededge(P,W){}\drawundirectedcurvededge(W,P){}
\drawundirectedcurvededge(S,j){}\drawundirectedcurvededge(j,S){}\drawundirectedcurvededge(j,k){}\drawundirectedcurvededge(k,j){}
\drawundirectedcurvededge(gg,ff){}\drawundirectedcurvededge(ff,gg){}\drawundirectedcurvededge(ff,ee){}\drawundirectedcurvededge(ee,ff){}
\drawundirectedcurvededge(x,y){}\drawundirectedcurvededge(y,x){}\drawundirectedcurvededge(w,aa){}\drawundirectedcurvededge(aa,w){}

\drawundirectedloop[r](a){}\drawundirectedloop(b){}\drawundirectedloop[b](c){}\drawundirectedloop[r](f){}\drawundirectedloop[r](g){}\drawundirectedloop[b](B){}\drawundirectedloop[b](i){}
\drawundirectedloop[b](D){}\drawundirectedloop[l](l){}\drawundirectedloop[r](o){}\drawundirectedloop[l](q){}\drawundirectedloop[r](r){}\drawundirectedloop[b](u){}\drawundirectedloop[b](F){}
\drawundirectedloop[l](V){}\drawundirectedloop[l](gg){}\drawundirectedloop(cc){}\drawundirectedloop[b](dd){}\drawundirectedloop(x){}\drawundirectedloop[b](aa){}\drawundirectedloop[l](T){}
\drawundirectedloop[l](k){}\drawundirectedloop[l](R){}\drawundirectedloop(W){}\drawundirectedloop(Y){}\drawundirectedloop[l](V){}
\drawundirectedloop[r](J){}\drawundirectedloop(v){}\drawundirectedloop[l](z){}
\end{picture}
\end{center}
\unitlength=0,4mm

\begin{teo}\label{4isomorfismo}
If $\xi\in E_4$, then $\Gamma_{\xi}$ and $\Gamma_{(4)}$ are isomorphic as unrooted graphs.
\end{teo}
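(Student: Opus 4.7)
The plan is as follows. By Proposition \ref{cofinality}, the set $E_4$ is a single orbit of the Basilica action on $\partial T$, so the Schreier graph $\Gamma_\xi$ is literally the same labeled graph for every $\xi \in E_4$: only the choice of root varies. It therefore suffices to exhibit an isomorphism between the unrooted graphs $\Gamma_{0^\omega}$ and $\Gamma_{(4)}$, which I would construct as the pointed Gromov--Hausdorff limit of explicit isomorphisms between balls.

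Fix a radius $r$ and choose $n$ so large that $B_{\Gamma_n}(0^n,r) \cong B_{\Gamma_{0^\omega}}(0^\omega,r)$ and $n \geq 2r+2$, so that the two cycles through $0^n$ are longer than $2r$ and the bounds on $k$ in Lemma \ref{LemDistDec} are irrelevant at scale $r$. By Proposition \ref{BasicProp} the vertex $0^n$ lies on exactly two cycles $C(0^n)$ and $C'(0^n)$, and the ball of radius $r$ around $0^n$ in $\Gamma_n$ consists of two symmetric arcs of length $r$ cut from each of these two cycles together with the decorations attached to their vertices. Lemma \ref{LemDistDec} gives the heights of these decorations explicitly: the vertices on one of the two cycles at distance $2^k + l\cdot 2^{k+1}$ from $0^n$ carry $(2k+1)$-decorations, and the corresponding vertices on the other cycle at the same distance carry $(2k+2)$-decorations (which cycle bears the odd-height decorations depends on the parity of $n$, but this is just a labeling choice between the two cycles).

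On the other side, the ball of radius $r$ around $0$ in $\Gamma_{(4)}$ is obtained from length-$r$ arcs of the two double rays $\mathcal{R}_1$ and $\mathcal{R}_2$ in both directions by attaching, at the vertex located at position $\pm(2^k + l\cdot 2^{k+1})$ on $\mathcal{R}_1$ (respectively $\mathcal{R}_2$), a $(2k+1)$-decoration (respectively a $(2k+2)$-decoration). This is precisely the same combinatorial datum. Since any two $k$-decorations are mutually isomorphic as rooted graphs (rooted at their unique vertex of degree $2$), these local data produce an isomorphism
$$\phi_r \colon B_{\Gamma_{(4)}}(0,r) \longrightarrow B_{\Gamma_n}(0^n,r)$$
sending the $\mathcal{R}_1$-arcs to arcs of the ``odd-height'' cycle, the $\mathcal{R}_2$-arcs to arcs of the ``even-height'' cycle, and each $k$-decoration to a $k$-decoration.

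To promote the family $\{\phi_r\}_r$ to a global isomorphism $\Gamma_{(4)} \to \Gamma_{0^\omega}$, I would show that $\phi_{r+1}$ can be chosen to extend $\phi_r$. Coherence follows from Proposition \ref{PropMultiple}, Part 3, which ensures that under the covering $\pi_{n+1}$ every decoration in $\Gamma_{n+1}$ maps bijectively to a decoration of the same height in $\Gamma_n$; hence enlarging the radius or the level does not disturb the identifications already made. The main obstacle is purely bookkeeping, namely tracking which of the two cycles through $0^n$ plays the role of $\mathcal{R}_1$ and which of $\mathcal{R}_2$, since this assignment flips with the parity of $n$. However, since the two cycles are themselves related by the covering $\pi_{n+1}$ (the non-central cycle of $\Gamma_{n+1}$ is mapped isomorphically onto the central cycle of $\Gamma_n$), fixing this labeling once and for all at an even (or odd) level propagates consistently to all subsequent levels, and the resulting limit isomorphism realizes $\Gamma_{0^\omega} \cong \Gamma_{(4)}$.
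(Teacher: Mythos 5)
Your proposal is correct and follows essentially the same route as the paper: reduce to $\xi=0^\omega$ via Proposition \ref{cofinality}, use Lemma \ref{LemDistDec} to match the pattern of $(2k+1)$- and $(2k+2)$-decorations along the two cycles through $0^n$ with the defining data of $\Gamma_{(4)}$, and let the radius tend to infinity. The only difference is that you spell out the coherence of the ball isomorphisms $\phi_r$ needed to pass to a global isomorphism, a standard point the paper leaves implicit.
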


\begin{proof}
We can suppose without loss of generality that $\xi=0^\omega$ since we know that all words in $E_4$ belong to the same orbit. Recall that the length of $C(0^n)$ in $\Gamma_n$ is $2^{\lceil\frac{n-1}{2}\rceil}$. Consider the ball
$B_{\Gamma_n}(0^n,2^{\lceil\frac{n-1}{2}\rceil-1}-1)$ centered on $0^n$ and of radius $2^{\lceil\frac{n-1}{2}\rceil-1}-1$ as well as the ball $B_{\Gamma_{\xi}}(0^\omega,2^{\lceil\frac{n-1}{2}\rceil-1}-1)$
in $\Gamma_{\xi}$. By Lemma \ref{LemDistDec}, these balls are isomorphic for every $n\geq 2$ and their radii tend to infinity as $n\rightarrow\infty$.
\end{proof}

\begin{proof}[Proof of Theorem \ref{totaltheorem}, Parts 2 and 3]
Let $\xi \in E_2$. We can write $\xi=\alpha_1\beta_1\alpha_2\beta_2\dots$ where $\alpha_i,\beta_i \in \{0,1\}$ and either $\{\alpha_i\}_{i\geq 1}$ or $\{\beta_i\}_{i\geq 1}$ (but not both) has only finitely many $1$'s. Suppose without loss of generality that $\{\alpha_i\}_{i\geq 1}$ contains finitely many $1$'s (the other case can be treated in a similar way). Let $l\geq 0$ be such that the prefix $\xi_{2l-1}$ ends by the last non-zero value of the sequence $\{\alpha_i\}$, $\xi_{-1}$ denoting by convention the empty prefix. Let $k = l +\inf\{i\geq 0 \ : \ \beta_{i+l}=1\}$ so
that $\beta_k$ is the first $1$ appearing in $\xi$ after the prefix $\xi_{2l-1}$. The infinite word $\xi$ can be rewritten as
$$
\xi=\xi_{2k}0\beta_{k+1}0\beta_{k+2}\dots.
$$\\
\indent The next observation follows from the proof of Proposition \ref{PropInverseCovering}: since $\xi_{2k}$ ends by a $1$, $\xi_{2k}\in\mathcal{D}(0^{2k})^c$ and $\xi_{2k}0\in\mathcal{D}(0^{2k+1})$. For each $r\geq 1$, on one hand, $\xi_{2k}0\beta_{k+1}\dots0\beta_{k+r}0$ belongs to the left part of $\Gamma_{2k+2r+1}$ and there is an expansion in the passage from $\xi_{2k+2r}$ to $\xi_{2k+2r+1}$, and on the other hand, $\xi_{2k}0\beta_{k+1}\dots0\beta_{k+r}$ belongs to the central part of $\Gamma_{2k+2r}$ and a contraction occurs in the passage from $\xi_{2k+2r-1}$ to $\xi_{2k+2r}$.\\
\indent Let $\mathcal{CP}_{\xi_{2k}}$ be the cycle-path joining $\xi_{2k}$ to the central cycle of $\Gamma_{2k}$ and let $\mathcal{LP}_{\xi_{2k}}=\{k_i^{2k}\}_{i=1}^t$ be as defined in Notation \ref{notationCPCL}. By the previous observation, $|\mathcal{CP}_{\xi_{2k+2r}}|=|\mathcal{CP}_{\xi_{2k}}|=t$ for each $r\geq 1$. Moreover, it follows from Lemma \ref{lemLP}, that $k_i^{2k+2r}=k_i^{2k}$ for all $1\leq i\leq t-1$ and each $r\geq 1$. Thus, we conclude that for $N$ sufficiently large, the distance separating the vertex $\xi_{2n}$ from the central cycle in $\Gamma_{2n}$ is constant for every $n>N$. On the other hand, the length $k_t^{2k+2r}$ of the central cycle of $\Gamma_{2k+2r}$ tends to infinity as $r\to \infty$, so that, in the limit, the central cycle splits into two disjoint rays $\mathcal{R}$ and $\mathcal{R}'$. Thus, $\Gamma_{\xi}$ has at least two ends.\\
\indent By Proposition \ref{PropSeparability}, given any $\eta\in V(\Gamma_\xi)$, the height of the decoration $\mathcal{D}(\eta)$ is finite. Thus, any ray in $\Gamma_\xi$ must be equivalent either to $\mathcal{R}$ or to $\mathcal{R}'$. Thus, $\Gamma_{\xi}$ has exactly two ends.\\
\indent We show now that if $\xi\in E_1$, then $\Gamma_\xi$ has one end. On one hand, the SEC-sequence associated with $\xi$ cannot contain two consecutive C's (see Remark \ref{RemonSEC}). On the other hand, if $\xi_n$ belongs to the right part of $\Gamma_n$ and
$n'$ is the first index greater than $n$ such that $\xi_{n'}$ belongs again to the right part of $\Gamma_{n'}$, then
$|\mathcal{CP}_{\xi_{n'}}|=|\mathcal{CP}_{\xi_{n}}|+1$.\\

\begin{lem} \label{lemonCP}
For any $\xi\in\{0,1\}^\omega$, consider the subgraph of $(\Gamma_\xi,\xi)$, $(\mathcal{CP}_\xi,\xi):=\lim_{n\to\infty}(\mathcal{CP}_{\xi_n},\xi_n)$. Then $\mathcal{CP}_\xi$ is a cycle-path of infinite length, $\mathcal{CP}_\xi=\{P_i\}_{i\geq 1}$, if and only if $\xi\in E_1$. In this case, for any $n\geq 1$ such that $\xi_n$ ends with a $1$, the subgraph $\{P^n_i\}_{i=1}^{t-1}$ of $\mathcal{CP}_{\xi_n}=\{P^n_i\}_{i=1}^t$ is isomorphic to  $\{P_i\}_{i=1}^{t-1}\subset\mathcal{CP}_\xi$.
\end{lem}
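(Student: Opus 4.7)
The plan is to reduce both claims to combinatorial statements about the SEC-sequence (Definition \ref{Defindexofstab}) of $\xi$. Writing $t_n:=|\mathcal{CP}_{\xi_n}|$, Proposition \ref{PropInverseCovering} gives $t_n = 1 + \#E - \#C$ among $\sigma_1,\ldots,\sigma_n$, so $\mathcal{CP}_\xi$ is of infinite length iff $t_n\to\infty$. Unpacking the inverse-covering construction in the proof of Proposition \ref{PropInverseCovering}, the SEC-sequence is driven by the digits of $\xi$ as follows: from $\sigma_m=E$, the digit $\xi_m=1$ forces $\sigma_{m+1}=E$ (a ``solo $E$'', net $+1$ to $t$) and $\xi_m=0$ forces $\sigma_{m+1}=C$ (forming an ``$EC$''-pair, net $0$); from $\sigma_m=C$ one always gets $\sigma_{m+1}=E$ regardless of $\xi_m$. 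Starting at position $z_\xi+1$, this defines a parser that, upon reading a $1$, advances one step (flipping parity of its position) and, upon reading a $0$, advances two steps (preserving parity, the subsequent digit being ``wasted''). Crucially, $t_n-1$ equals the number of $1$'s read by this parser up to position $n$.

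For the equivalence I run a parity argument on the parser's reading positions. If one of the subsequences $\{\xi_{2i-1}\}_i$, $\{\xi_{2i}\}_i$ has only finitely many $1$'s, there is a ``trap'' parity class and a threshold $N$ such that $\xi_k=0$ for every $k\geq N$ of trap parity; once the parser enters a trap-parity position $\geq N$, it stays there reading only $0$'s and can emit no further $1$-tokens. Since each $1$-token flips parity, only finitely many such reads can occur before this pinning, giving finitely many $1$-tokens, whence $t_n\not\to\infty$. Conversely, if both subsequences carry infinitely many $1$'s, the parser cannot become pinned on either parity — being stuck on a parity would force the parser to read infinitely many consecutive $0$'s at positions of that parity, contradicting the hypothesis — so it must emit infinitely many $1$-tokens and $t_n\to\infty$. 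This gives $\mathcal{CP}_\xi$ infinite $\Leftrightarrow\xi\in E_1$.

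For the second assertion, assume $\xi_n$ ends with $1$, so $\xi_n\in\mathcal{D}(0^n)^c$, $\sigma_{n+1}=E$, and $t_{n+1}=t_n+1$. The no-two-consecutive-$C$'s property (Remark \ref{RemonSEC}) together with a direct counting argument yields, at any step $m>n$ with $\sigma_{m+1}=C$, that $\sigma_m=E$ and hence at most $(m-n-1)/2$ of the $\sigma_i$ for $i\in\{n+1,\ldots,m\}$ are $C$'s; this forces $t_m\geq t_n+1$ at every such contraction step. By Lemma \ref{lemLP}, each expansion preserves the first $t_m$ cycle-lengths and each contraction the first $t_m-2$; combining with $t_m\geq t_n+1$ at contractions (and $t_m\geq t_n$ throughout), an induction on $m\geq n$ shows that the first $t_n-1$ cycles of $\mathcal{CP}_{\xi_m}$ are naturally isomorphic, via the inverse-covering construction of Figs.~5--6, to the first $t_n-1$ cycles of $\mathcal{CP}_{\xi_n}$. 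Taking the pointed Gromov-Hausdorff limit and invoking Proposition \ref{PropSeparability} — so that $B_{\Gamma_m}(\xi_m,r)\simeq B_{\Gamma_\xi}(\xi,r)$ for every fixed radius $r$ — identifies these cycles with the first $t_n-1$ cycles of $\mathcal{CP}_\xi$.

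I expect the parity/parser argument of the second paragraph to be the main technical obstacle: the starting position $z_\xi+1$ has a parity depending on the initial block of $0$'s of $\xi$, and one must carefully verify the trapping phenomenon for both values of this starting parity. The remaining steps reduce to bookkeeping with the SEC dynamics and direct invocations of Proposition \ref{PropInverseCovering} and Lemma \ref{lemLP}.
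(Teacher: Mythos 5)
Your proof is correct and follows essentially the same route as the paper's: the paper phrases the first equivalence via the observation that $E_1$ is exactly the set of words containing infinitely many subwords $10^k1$ with $k$ even (equivalently, prefixes landing in the right part of $\Gamma_n$ infinitely often), which is the same combinatorial fact that your parser/parity argument establishes, just with the SEC dynamics made explicit. For the second assertion the paper merely writes that it ``follows from the proof of Proposition \ref{PropInverseCovering}''; your counting of $C$'s via the no-two-consecutive-$C$'s property together with Lemma \ref{lemLP} is a correct filling-in of exactly that step.
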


\begin{proof}[Proof of the lemma]
Observe that, $\mathcal{CP}_{\xi}$ is a cycle-path of infinite length if and only if there exist infinitely many indices $n$ such that $\xi_n$ is in the right part of $\Gamma_n$; otherwise, we would observe from some point in the SEC-sequence associated with $\xi$ an alternating sequence of expansions E and contractions C, so that $|\mathcal{CP}_{\xi_{n}}|$ would be bounded as $n\to\infty$.\\
\indent It is easy to check that $E_1$ coincides with the set of words $\xi \in \{0,1\}^{\omega}$ in which the number of subwords of type $10^k1$, with $k\geq 0$ even, is infinite. Hence, if $\xi\in E_1$, there are infinitely many indices $n$ such that $\xi_n$ belongs to the right part of the graph $\Gamma_n$. Conversely, suppose that $\xi\notin E_1$, so that the number of subwords in $\xi$ of type $10^k1$, with $k\geq 0$ even, must be finite. Thus, the SEC-sequence associated with $\xi$ contains from some point an alternating sequence of expansions E and contractions C. It
follows that $\mathcal{CP}_\xi$ is not a cycle-path of infinite length (it consists of a finite cycle-path joining $\xi$ to a double-ray).\\
\indent The second part of the lemma follows from the proof of Proposition \ref{PropInverseCovering}.
\end{proof}

\emph{End of Proof of Theorem 4.1.} Let $\mathcal{R}$ be a ray contained in $\mathcal{CP}_{\xi}$. Each cycle $P_i\in \mathcal{CP}_{\xi}$ is finite and, by Proposition \ref{PropSeparability}, there is a finite decoration attached to each of its vertices. Hence, any other ray $\mathcal{R}'$ in $\Gamma_\xi$ must have infinitely many vertices in common with $\mathcal{R}$. We thus conclude that $\Gamma_{\xi}$ has one end.
\end{proof}

\subsection{Limit graphs with two ends}

Given $\xi\in E_2$, let $w$ be the longest prefix of $\xi$ ending by a subword of type $10^k1$, with $k$ even (see \emph{Proof of Theorem}
\ref{totaltheorem}, \emph{Parts 2 and 3}). If $\{\alpha_i\}\equiv 0$, then we set $w$ to be the empty word; if
$\{\beta_i\}\equiv 0$, then $w$ is the prefix of $\xi$ of length 1. Thus, we can write $\xi = w0x_10x_2\ldots$, $x_i\in\{0,1\}$. Introduce $\overline{\xi}=w0\overline{x}_10\overline{x}_2\ldots$, where
$\overline{x}_i=1-x_i$ for every $i\geq 1$. Note that $\xi\in E_2$ if and only if $\bar{\xi}\in E_2$.

\begin{teo}\label{xixibarra}
Let $\xi,\eta\in E_2$. Then $\Gamma_{\xi}$ and $\Gamma_{\eta}$ are isomorphic as unrooted graphs if and
only if either $\xi\sim\eta$ or $\overline{\xi}\sim\eta$. In particular, each isomorphism class of $2$-ended graphs consists of exactly two orbits.
\end{teo}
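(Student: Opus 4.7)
For the $(\Leftarrow)$ direction, note first that if $\xi\sim\eta$, both in $E_2$, then since $E_2$ is disjoint from $Cof(0^\omega)\cup Cof((01)^\omega)\cup Cof((10)^\omega)$, Proposition \ref{cofinality} places $\xi,\eta$ in the same $B$-orbit, hence $\Gamma_\xi\cong\Gamma_\eta$. So the substantive claim is that $\Gamma_\xi\cong\Gamma_{\overline\xi}$ as unrooted graphs. I would extract from the proof of Theorem \ref{totaltheorem}, Parts 2 and 3, the following picture of $\Gamma_\xi$: a bi-infinite \emph{spine} (the limit of the growing central cycles of $\Gamma_{2k+2r}$) with finite decorations grafted along both of its sides, and a finite cycle-path of length $t-1$ joining the basepoint $\xi$ to the spine. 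The bit $x_i$ in the decomposition $\xi=w0x_10x_2\ldots$ records, via the description of $\pi_{n+1}^{-1}$ given in the proof of Proposition \ref{PropInverseCovering} (see Figs.~5 and 6), which of the two mirror-image copies of $\mathcal{D}(0^n)$ (rotated by $\pm 90^\circ$) the new prefix lies in at the relevant step. Flipping every $x_i$ to $\overline x_i$ simultaneously therefore corresponds, in the limit, to the global reflection of $\Gamma_\xi$ across its spine; using Lemma \ref{LemDistDec} and the RCR (Proposition \ref{PropRCR}), one checks level by level that this reflection is a well-defined unrooted graph isomorphism $\sigma:\Gamma_\xi\to\Gamma_{\overline\xi}$.

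For the $(\Rightarrow)$ direction, let $\phi:\Gamma_\xi\to\Gamma_\eta$ be an unrooted graph isomorphism. By Proposition \ref{PropSeparability}, every vertex outside the spine of $\Gamma_\xi$ lies in a finite decoration, so any double-ray of $\Gamma_\xi$ must agree with the spine on a cofinite set of edges. Consequently the spine is an intrinsic invariant of the unrooted graph, and $\phi$ must send the spine of $\Gamma_\xi$ onto the spine of $\Gamma_\eta$, possibly reversing its orientation. Reading along each spine the bi-infinite labeled sequence of decoration heights and sides, Lemma \ref{LemDistDec} together with the RCR shows that this sequence, combined with the intrinsic finite cycle-path attaching the basepoint, reconstructs the pair $(w,(x_i))$ of $\xi$ uniquely up to cofinal modifications of $w$. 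If $\phi$ preserves the spine orientation, matching these data across $\phi$ forces $\xi\sim\eta$; if $\phi$ reverses it, then $\phi\circ\sigma$ preserves the orientation, yielding $\overline\xi\sim\eta$.

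For the final counting claim, note that $\xi$ and $\overline\xi$ differ at every coordinate in the infinite set $\{|w|+2j:\,j\geq 1\}$ and so are not cofinal; hence $B\cdot\xi\neq B\cdot\overline\xi$, and each unrooted isomorphism class of $2$-ended Schreier graphs is the disjoint union of exactly these two orbits.

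The main obstacle is the reconstruction step in the converse, namely showing that the labeled bi-infinite decoration sequence along the spine determines $(w,(x_i))$ uniquely modulo the overline symmetry. This will rest on the fact, extractable from Lemma \ref{LemDistDec} and the RCR, that decoration heights along the spine grow without bound in a pattern controlled by the dyadic valuation of the distance from the attachment point; this rigidity rules out spurious translations or rearrangements of the decoration pattern and forces $\sigma$ to be the only non-trivial unrooted symmetry interchanging the two ends.
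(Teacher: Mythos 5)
Your forward direction follows the paper's route: the paper realizes the isomorphism $\Gamma_{\overline\xi}\to\Gamma_\eta$ by sending $\xi_{n_0+2}$ to the vertex $\widetilde\xi_{n_0+2}$ attached at the position of the diagram $\overline D_{n_0+2}$ symmetric to that of $\xi_{n_0+2}$ with respect to the middle vertex, and then verifies that balls of growing radius about the two roots agree -- exactly the level-by-level check via the RCR that you propose. One genuine error in your description, though: the isomorphism $\sigma$ is \emph{not} a ``reflection across the spine''. In the abstract (unlabeled, unembedded) graph a decoration hangs off a single cut vertex of the line and has no ``side''; what distinguishes $\Gamma_\xi$ from $\Gamma_{\overline\xi}$ is the set of integer positions at which decorations of each height are attached (the sets $A_k'$ of Proposition \ref{PropDef2ends}), and flipping every bit replaces $2^k-1-\sum_{i\leq k}2^i\beta_{i+1}$ by its negative modulo $2^{k+1}$. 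So $\sigma$ is induced by $n\mapsto -n$ on the spine: it reverses the spine and exchanges the two ends, as you correctly assert only in your last sentence. A reflection fixing the spine pointwise would be the identity on the unlabeled graph and cannot be the required isomorphism; you should reconcile this with your opening picture of ``decorations grafted along both sides''.

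For the converse you take a genuinely different route. The paper does not reconstruct $\xi$ from the unrooted graph: it uses Proposition \ref{cofinality} to conclude that $\varphi(\xi)$ is cofinal with $\eta$, writes $\xi=u0x_10x_2\dots$ and $\varphi(\xi)=\hat u0y_10y_2\dots$ with $|u|=|\hat u|$ (equality of lengths being forced by the parity of the decoration heights occurring in the graph), and from the existence of infinitely many indices with $x_i\neq y_i$ but $x_{i+1}=y_{i+1}$ extracts a finite radius $R$ with $B_{\Gamma_\xi}(\xi,R)\not\simeq B_{\Gamma_\eta}(\varphi(\xi),R)$, a contradiction. Your global reconstruction of $(w,(x_i))$ from the bi-infinite decoration pattern can be made to work (the spine is indeed the unique double ray, hence intrinsic), but the step you yourself flag as ``the main obstacle'' is where all the content lies, and your sketch omits the decisive point: a translation of the decorated line by $d\in\mathbb Z$ carries the congruence data of $\xi$ to that of $\eta$ iff $\sum_i 2^i(\beta_{i+1}-\beta'_{i+1})=d$ holds $2$-adically, and one must invoke the fact that for $\xi\in E_2$ the sequence $\{\beta_i\}$ contains infinitely many $0$'s \emph{and} infinitely many $1$'s to rule out an infinite carry and conclude $\beta\sim\beta'$ (or $\beta\sim\overline{\beta'}$ in the end-reversing case); the parity invariant $E_{2,even}$ versus $E_{2,odd}$ is also needed to exclude odd shifts. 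As it stands the proposal is a correct and workable plan, with the forward direction essentially the paper's argument, the converse a legitimate alternative, but the key rigidity lemma still unproved and the description of $\sigma$ to be corrected.
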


\begin{proof}
\indent $\Leftarrow$: If $\xi\sim\eta$, then $\Gamma_{\xi} = \Gamma_{\eta}$ by Proposition \ref{cofinality}. Suppose now that $\eta\sim\overline{\xi}$. This implies that there exist prefixes $v,\widehat{v},\widetilde{v}$ with $|v|=|\widehat{v}|=|\widetilde{v}|=n_0$ and where $\widetilde{v}$ ends by a $1$, such that
$$
\eta = v0z_10z_2\ldots \qquad \overline{\xi}=\hat{v}0z_10z_2\ldots
\qquad \xi = \widetilde{v}0\overline{z}_10\overline{z}_2\ldots,
$$
where $\overline{z}_i = 1-z_i$ for each $i\geq 1$. It follows from
the proof of Proposition \ref{PropInverseCovering} that
$\xi_{n_0+2}$ belongs to $\mathcal{D}(0^{n_0+2})^c\subset
\Gamma_{n_0+2}$. If we encode $\Gamma_{n_0+2}$ by the diagram
$\overline{D}_{n_0+2}$ (see Subsection \ref{finiteSch}), let
$u_{n_0+2}$ be the vertex of $\overline{D}_{n_0+2}$ to which is
attached the decoration containing $\xi_{n_0+2}$. Let
$u'_{n_0+2}\in \overline{D}_{n_0+2}$ be the symmetric vertex with
respect to the middle vertex of $\overline{D}_{n_0+2}$ so that the
decorations attached to $u_{n_0+2}$ and $u'_{n_0+2}$ have the same
height (observe that the vertices of $\overline{D}_n$ are labeled symmetrically with respect to the middle vertex labeled by $n$.) Finally, let
$\widetilde{\xi}_{n_0+2}\in \mathcal{D}(u'_{n_0+2})$ be
symmetric to $\xi_{n_0+2}$ as shown in Fig. 8.

\hspace{2cm}\begin{picture}(0,0)%
\includegraphics{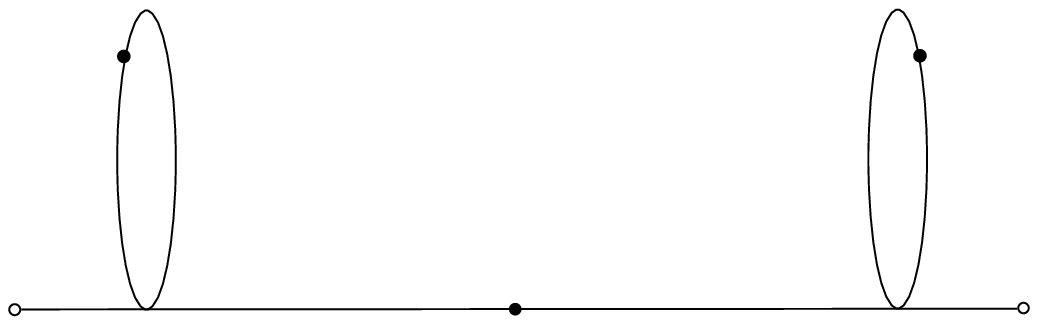}%
\end{picture}%
%
%
\setlength{\unitlength}{4342sp}%
\begingroup\makeatletter\ifx\SetFigFont\undefined%
\gdef\SetFigFont#1#2#3#4#5{%
  \reset@font\fontsize{#1}{#2pt}%
  \fontfamily{#3}\fontseries{#4}\fontshape{#5}%
  \selectfont}%
\fi\endgroup%
\begin{picture}(4741,1928)(1735,-1307)
\put(2060,362){\makebox(0,0)[lb]{\smash{{\SetFigFont{10}{12.0}{\rmdefault}{\mddefault}{\updefault}{\color[rgb]{0,0,0}$\xi_{n_0+2}$}%
}}}}
\put(6050,343){\makebox(0,0)[lb]{\smash{{\SetFigFont{10}{12.0}{\rmdefault}{\mddefault}{\updefault}{\color[rgb]{0,0,0}$\widetilde{\xi}_{n_0+2}$}%
}}}}
\put(3952,-908){\makebox(0,0)[lb]{\smash{{\SetFigFont{10}{12.0}{\rmdefault}{\mddefault}{\updefault}{\color[rgb]{0,0,0}$n_0+2$}%
}}}}
\put(1750,-891){\makebox(0,0)[lb]{\smash{{\SetFigFont{10}{12.0}{\rmdefault}{\mddefault}{\updefault}{\color[rgb]{0,0,0}$n_0+2$}%
}}}}
\put(6264,-891){\makebox(0,0)[lb]{\smash{{\SetFigFont{10}{12.0}{\rmdefault}{\mddefault}{\updefault}{\color[rgb]{0,0,0}$n_0+2$}%
}}}}
\put(2485,-880){\makebox(0,0)[lb]{\smash{{\SetFigFont{10}{12.0}{\rmdefault}{\mddefault}{\updefault}{\color[rgb]{0,0,0}$u_{n_0+2}$}%
}}}}
\put(5640,-880){\makebox(0,0)[lb]{\smash{{\SetFigFont{10}{12.0}{\rmdefault}{\mddefault}{\updefault}{\color[rgb]{0,0,0}$u'_{n_0+2}$}%
}}}}
\put(3809,-1247){\makebox(0,0)[lb]{\smash{{\SetFigFont{10}{12.0}{\rmdefault}{\mddefault}{\updefault}{\color[rgb]{0,0,0}\textbf{Fig. 8.} $\bar{D}_{n_0+2}$}%
}}}}
\end{picture}%

\vspace{0.5cm}

Consider  $\varphi(\xi)= \varphi (\xi_{n_0+2}0\overline{z}_20\overline{z}_3\ldots) := \widetilde{\xi}_{n_0+2}0z_20z_3\ldots.$ Since $\varphi(\xi)\sim\eta$, then by Proposition \ref{cofinality}, $\varphi(\xi)\in V(\Gamma_\eta)$.\\
\indent For each $i\geq 1$, let $R_i = d(\xi_{i+2},0^{i+2})$ in
$\Gamma_{i+2}$. Then, since $\{(\Gamma_n,\xi_n)\}_{n\geq 1}$
(respectively $\{(\Gamma_n,\eta_n)\}_{n\geq 1}$) converges to
$(\Gamma_\xi,\xi)$ (respectively $(\Gamma_\eta,\eta)$), there
exists $N(R_i)>1$ such that for all $n>N(R_i)$, we have
$$
B_{\Gamma_{\xi}}(\xi,R_i)\simeq B_{\Gamma_{n}}(\xi_{n},R_i)\simeq
B_{\Gamma_{n}}(\varphi(\xi)_{n},R_i)\simeq B_{\Gamma_{\eta}}(\varphi(\xi),R_i),
$$
where the second isomorphism follows from the previous observations and from the fact that if $\xi_n\in \mathcal{D}(0^n)^c$ and $\xi_{n+2}\in \mathcal{D}(0^{n+2})^c\setminus \mathcal{D}(0^{n+1}1)$, then $\xi_{n+2}=\xi_n0x$ where $x\in\{0,1\}$, so that $x$ determines whether $\xi_{n+2}$ belongs to a decoration situated in the left-half or in the right-half of $D_{n+2}$.
 Since $\xi\in E_2$, $R_i$ tends to infinity as $i\to\infty$ by Proposition \ref{cofinality}. We conclude that $\Gamma_\xi$ and $\Gamma_\eta$ are isomorphic.\\
\indent $\Rightarrow$: Let $\eta,\xi \in E_2$ and suppose that $\xi\nsim\eta$ and $\overline{\xi}\nsim\eta$. For the sake of contradiction, suppose that there exists an isomorphism $\varphi:\Gamma_{\xi}\rightarrow \Gamma_{\eta}$ of unrooted graphs. We will show that there exists a radius $R$ such that the balls $B_{\Gamma_{\xi}}(\xi,R)$
and $B_{\Gamma_{\eta}}(\varphi(\xi),R)$ are not isomorphic which will yield a contradiction.\\
\indent By Proposition \ref{cofinality}, $\varphi(\xi)\sim\eta$. There exist finite words $u,\hat{u}$  such that
$$
\xi = u0x_10x_2\ldots \qquad \varphi(\xi)=\hat{u}0y_10y_2\ldots.
$$
and $|u|=|\hat{u}|$. If not, then  $\xi$ and
$\varphi(\xi)$  belong to infinite graphs whose decorations
have different parity, which is a contradiction.
Indeed, depending on whether it is the sequence $\{\alpha_i\}$ or $\{\beta_j\}$ that takes only finitely many $1$'s, the corresponding infinite Schreier graph has decorations of only even or only odd height.\\
\indent Since
$\eta\nsim\xi$, $\eta\nsim\bar{\xi}$ but $\eta\sim\varphi(\xi)$,
there exist infinitely many indices $i$ such that $x_i\neq y_i$
and $x_{i+1}=y_{i+1}$. We can thus suppose without loss of
generality that $u$ and $\hat{u}$ both end with a $1$. Moreover,
arguments as in Proof of Proposition \ref{PropInverseCovering} imply that there exists an infinite
subsequence $\{n_k\}_{k\geq 0}\subset\mathbb{N}$ such that, for
any $k\geq 0$, $\xi_{n_k}$ and $\varphi(\xi)_{n_k}$ belong to
decorations attached to vertices in different halves of
$D_{n_k}$, while $\xi_{n_k+2}$ and $\varphi(\xi)_{n_k+2}$ belong
to decorations attached to distinct vertices $v_{n_k+2}$ and
$v_{n_k+2}'$ in the same half of $D_{n_k+2}$. Thus
$d(v_{n_k+2},w_{n_k+2}) \neq d(v'_{n_k+2},w_{n_k+2})$ where
$w_{n_k+2}$ denotes the vertex of $\overline{D}_{n_k+2}$ encoding
$0^{n_k+1}1$ (see Fig. 9).

\hspace{2cm}\begin{picture}(0,0)%
\includegraphics{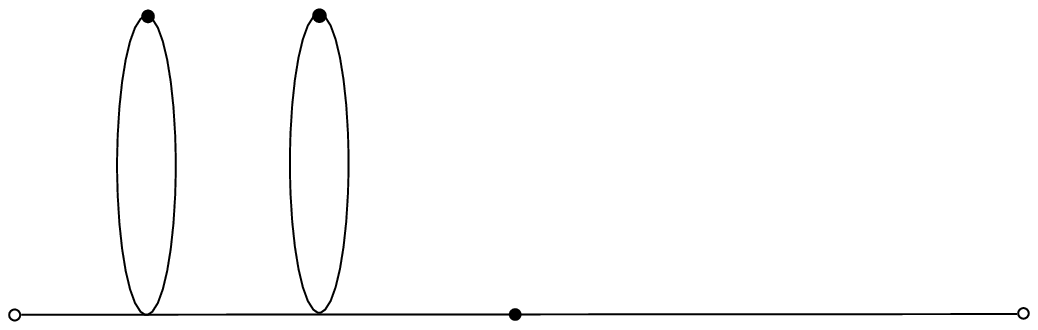}%
\end{picture}%
%
%
\setlength{\unitlength}{4342sp}%
\begingroup\makeatletter\ifx\SetFigFont\undefined%
\gdef\SetFigFont#1#2#3#4#5{%
  \reset@font\fontsize{#1}{#2pt}%
  \fontfamily{#3}\fontseries{#4}\fontshape{#5}%
  \selectfont}%
\fi\endgroup%
\begin{picture}(4698,2084)(1778,-1729)
\put(6271,-1371){\makebox(0,0)[lb]{\smash{{\SetFigFont{10}{12.0}{\rmdefault}{\mddefault}{\updefault}{\color[rgb]{0,0,0}$n_k+2$}%
}}}}
\put(2228,195){\makebox(0,0)[lb]{\smash{{\SetFigFont{10}{12.0}{\rmdefault}{\mddefault}{\updefault}{\color[rgb]{0,0,0}$\varphi(\xi)_{n_k+2}$}%
}}}}
\put(1793,-1364){\makebox(0,0)[lb]{\smash{{\SetFigFont{10}{12.0}{\rmdefault}{\mddefault}{\updefault}{\color[rgb]{0,0,0}$n_k+2$}%
}}}}
\put(2450,-1369){\makebox(0,0)[lb]{\smash{{\SetFigFont{10}{12.0}{\rmdefault}{\mddefault}{\updefault}{\color[rgb]{0,0,0}$v'_{n_k+2}$}%
}}}}
\put(3208,-1367){\makebox(0,0)[lb]{\smash{{\SetFigFont{10}{12.0}{\rmdefault}{\mddefault}{\updefault}{\color[rgb]{0,0,0}$v_{n_k+2}$}%
}}}}
\put(3993,-1367){\makebox(0,0)[lb]{\smash{{\SetFigFont{10}{12.0}{\rmdefault}{\mddefault}{\updefault}{\color[rgb]{0,0,0}$w_{n_k+2}$}%
}}}}
\put(3166,218){\makebox(0,0)[lb]{\smash{{\SetFigFont{10}{12.0}{\rmdefault}{\mddefault}{\updefault}{\color[rgb]{0,0,0}$\xi_{n_k+2}$}%
}}}}
\put(3792,-1669){\makebox(0,0)[lb]{\smash{{\SetFigFont{10}{12.0}{\rmdefault}{\mddefault}{\updefault}{\color[rgb]{0,0,0}\textbf{Fig. 9.} $\bar{D}_{n_k+2}$}%
}}}}
\end{picture}%

\vspace{0.5cm}

Suppose without loss of generality, that $d_{\Gamma_{n_k+2}}(v_{n_k+2},w_{n_k+2})<d_{\Gamma_{n_k+2}}(v'_{n_k+2},w_{n_k+2})$. By RCR, for each $k\geq 0$, the minimal distance between $v_{n_k+2}$ (respectively $v'_{n_k+2}$) and a vertex in $\overline{D}_{n_k+2}$ labeled by $n_0+2$ is constant. On the other hand, since $\varphi(\xi)\in E_2$, the distance between $v_{n_k+2}$ (respectively $v'_{n_k+2}$) and the boundary vertices of $\overline{D}_{n_k+2}$ tends to infinity as $k\to\infty$.\\
\indent Thus, for any sufficiently large $k$, there exists a radius $R(n_0)$ defined as the minimum over all radii $r$ such that the ball $B_{\Gamma_{n_k+2}}(\xi_{n_k+2},r)$ contains a $(n_0+2)$-decoration attached to a vertex of $D_{n_k+2}$, and such that both $B_{\Gamma_{n_k+2}}(\xi_{n_k+2},r)$ and $B_{\Gamma_{n_k+2}}(\varphi(\xi)_{n_k+2},r)$ do not contain any boundary vertices of $\overline{D}_{n_k+2}$.\\
\indent Hence, there exists $N(R(n_0))>1$ such that for all $n_k\geq N(R(n_0))$, we have
$$
B_{\Gamma_{\xi}}(\xi,R(n_0))\simeq B_{\Gamma_{n_k+2}}(\xi_{n_k+2},R(n_0))\not\simeq
B_{\Gamma_{n_k+2}}(\varphi(\xi)_{n_k+2},R(n_0))\simeq B_{\Gamma_{\eta}}(\varphi(\xi),R(n_0)).
$$
\end{proof}

\indent For $\xi\in E_2$, we will now construct explicitly a $4$-regular, $2$-ended graph $\Gamma(\xi)$ that we will show to be isomorphic (as unrooted graph) to $\Gamma_\xi$ (see Theorem \ref{2isomorfismo} below).
Recall from Theorem \ref{totaltheorem} that $E_2 = \{\alpha_1\beta_1\alpha_2\beta_2...\ |\ \alpha_i,\beta_j\in\{0,1\} \textrm{ such that either } \{\alpha_i\} \textrm{ or } \{\beta_j\} \textrm{ has finitely many 1's}\}$.
Observe that in the case where the sequence $\{\alpha_i\}$ (respectively $\{\beta_j\}$) takes the value 1 a finite number of times, only decorations of even (respectively odd) height will appear in the limit graph. We can thus partition the set $E_2=E_{2,even}\sqcup E_{2,odd}$. We will only consider the case of $\xi\in E_{2,even}$, (the other case can be treated similarly.)
Suppose therefore that $\xi = \alpha_1\beta_1\alpha_2\beta_2\alpha_3\beta_3\ldots \in E_{2,even}$, so that $\{\alpha_i\}_{i\geq 1}$ contains finitely many $1$'s
and define the graph $\Gamma(\xi)$ associated with $\xi$.

\begin{prop} \label{PropDef2ends}
Consider the following subsets of $\mathbb{Z}$:
\begin{displaymath}
A'_0:= 2\mathbb{Z} \ \ \mbox{ and } \ \ \ A'_k:=\{n\in \mathbb{Z} \
: \ n\equiv 2^k-1-\sum_{i=1}^k2^i\beta_{i+1} \mod 2^{k+1}\} \
\mbox{ for each }k\geq 1.
\end{displaymath}
Construct $\Gamma(\xi)$ as a straight line with integer vertices with, for each $k\geq 0$, a $(2k+2)$-decoration attached by its unique vertex of degree 2 to every vertex corresponding to an integer in $A'_k$. The constructed graph $\Gamma(\xi)$ is well defined.
\end{prop}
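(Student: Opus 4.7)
My plan is to prove well-definedness by showing that $\{A'_k\}_{k\geq 0}$ is a partition of $\mathbb{Z}$, so that each integer vertex on the underlying line in $\Gamma(\xi)$ carries exactly one decoration (attached at the unique degree-$2$ vertex of the decoration) and $\Gamma(\xi)$ is an unambiguous $4$-regular graph. Disjointness will come from elementary modular arithmetic: $A'_0=2\mathbb{Z}$ consists of even integers, while for $k\geq 1$ the representative $2^k-1-\sum_{i=1}^k 2^i\beta_{i+1}$ is odd, so $A'_0\cap A'_k=\emptyset$; for $1\leq k<l$, reducing the defining congruence of $A'_l$ modulo $2^{k+1}$ (using $2^i\equiv 0\pmod{2^{k+1}}$ for $i\geq k+1$) gives $n\equiv -1-\sum_{i=1}^k 2^i\beta_{i+1}\pmod{2^{k+1}}$, which differs from the $A'_k$ congruence by exactly $2^k\not\equiv 0\pmod{2^{k+1}}$.

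For the covering $\bigcup_{k\geq 0}A'_k=\mathbb{Z}$, the key input is that since $\xi\in E_{2,even}$, the sequence $(\beta_i)_{i\geq 1}$ must have infinitely many $0$'s and infinitely many $1$'s: otherwise $\xi$ would be cofinal with $0^\omega$ or $(01)^\omega$ and lie in $E_4$ by Proposition \ref{cofinality}, contradicting $\xi\in E_2$. Given an odd $m\in\mathbb{Z}$, I would set $j=(m+1)/2\in\mathbb{Z}$ and consider the $2$-adic integer $B:=\sum_{i\geq 0}\beta_{i+2}\cdot 2^i\in\mathbb{Z}_2$. A direct manipulation of the defining congruence shows $m\in A'_k$ if and only if $j+B\equiv 2^{k-1}\pmod{2^k}$; that is, the lowest non-zero binary digit of $j+B$ viewed in $\mathbb{Z}_2$ lies at position $k-1$. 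Since $(\beta_i)$ is neither eventually $\bar 0$ nor eventually $\bar 1$, the $2$-adic expansion of $B$ is not eventually constant, whereas $-j$ for any $j\in\mathbb{Z}$ has an eventually constant $2$-adic expansion. Hence $j+B\neq 0$ in $\mathbb{Z}_2$ and its lowest non-zero digit exists uniquely, giving a unique $k\geq 1$ with $m\in A'_k$.

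The main technical point is the covering statement, which reduces to a $2$-adic reformulation exploiting that $\xi\notin E_4$ prevents the tail of $(\beta_i)$ from being constant; the disjointness is a short modular computation, and the construction then assembles into a well-defined graph without further issue.
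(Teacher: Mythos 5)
Your proof is correct, and for the covering half it takes a genuinely different route from the paper's. The paper isolates the partition statement as a separate lemma (that $\coprod_{k\geq 1}L_k$ partitions $2\mathbb{Z}+1$ whenever the defining $0$--$1$ sequence takes both values infinitely often) and proves the covering by an explicit iterative algorithm: it writes $n+1=2^{u_1}m_1$ with $m_1$ odd, compares $u_1$ with the position $p_1$ of the first $1$ in the sequence, and recurses, establishing termination via the strictly decreasing sequence $\{|m_s+1|\}$ and a final case analysis for $n>0$ versus $n<0$. Your $2$-adic reformulation replaces all of this with a single observation: $m\in A'_k$ if and only if $v_2(j+B)=k-1$ where $j=(m+1)/2$ and $B=\sum_{i\geq 0}\beta_{i+2}2^i\in\mathbb{Z}_2$, and $j+B\neq 0$ because the digit sequence of $B$ is not eventually constant while that of any rational integer $-j$ is. This is shorter and makes transparent exactly where the hypothesis that $(\beta_i)$ takes both values infinitely often (equivalently, $\xi\notin E_4$) is used; the paper's argument has the compensating virtue of being entirely elementary and of effectively computing the index $k$ for a given $n$. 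Your disjointness step is essentially the same modular arithmetic as the paper's, phrased via reduction mod $2^{k+1}$ rather than via comparing $2$-adic valuations of the two sides; and your preliminary derivation that $(\beta_i)$ cannot be eventually constant when $\xi\in E_{2,even}$ is correct and supplies a hypothesis that the paper's lemma assumes but whose verification is left implicit.
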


\indent The graph $\Gamma(\xi)$ for $\xi\in E_{2,odd}$ is defined similarly, replacing $\beta$ by $\alpha$ in the definition of $A_k'$, and by attaching $(2k+1)$-decorations instead of $(2k+2)$-decorations. Proposition \ref{PropDef2ends} follows from the following lemma.

\begin{lem}
Let $\{x_i\}_{i\geq 1}$ be a sequence with values in $\{0,1\}$ supposing that it takes both values infinitely many times. For every $k\geq 1$, define
$$
L_k = \{n \in \mathbb{Z} \ : \ n \equiv 2^k-1 -\sum_{i=1}^k2^ix_i
\mod 2^{k+1}\}.
$$
Then $\coprod_{k\geq 1}L_k$ is a partition of $2\mathbb{Z}+1$.
\end{lem}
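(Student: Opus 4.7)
The plan is to reinterpret the defining condition of $L_k$ in terms of the $2$-adic valuation $v_2$. Setting $f_k := -1 - \sum_{i=1}^k 2^i x_i$, the congruence $n \equiv 2^k - 1 - \sum_{i=1}^k 2^i x_i \pmod{2^{k+1}}$ becomes $n - f_k \equiv 2^k \pmod{2^{k+1}}$, i.e.\ $v_2(n - f_k) = k$. In particular, $L_k$ consists of odd integers for every $k \geq 1$, so the inclusion $\coprod L_k \subseteq 2\mathbb{Z}+1$ is free.

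Next, I would view $(f_k)_{k \geq 1}$ as a Cauchy sequence in $\mathbb{Z}_2$, converging to $f_\infty := -1 - \sum_{i \geq 1} 2^i x_i \in \mathbb{Z}_2$. Since $f_\infty \equiv f_k \pmod{2^{k+1}}$, one has $v_2(n - f_k) = v_2(n - f_\infty)$ whenever either side is $\leq k$, so $n \in L_k$ if and only if $v_2(n - f_\infty) = k$. Disjointness of the family $\{L_k\}_{k \geq 1}$ is then immediate from the well-definedness of the valuation of the single $2$-adic integer $n - f_\infty$.

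The only remaining step, which I expect to be the main obstacle, is exhaustiveness: every odd integer $n$ must satisfy $1 \leq v_2(n - f_\infty) < \infty$. The lower bound is automatic, since $f_\infty \equiv -1 \equiv 1 \pmod{2}$, so $f_\infty$ is a $2$-adic unit and $n - f_\infty$ is even in $\mathbb{Z}_2$ whenever $n$ is an odd integer. Finiteness of $v_2(n - f_\infty)$ is equivalent to $f_\infty \notin \mathbb{Z}$, and this is exactly where the hypothesis on $\{x_i\}$ enters: a rational integer has an eventually constant $2$-adic expansion (all $0$'s for non-negative integers, all $1$'s for negative ones via $-1 = \ldots 111$), while the $2$-adic digits of $\sum_{i \geq 1} 2^i x_i$ are precisely the $x_i$'s, which by assumption take both values infinitely often. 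Hence $f_\infty \notin \mathbb{Z}$, and for every odd integer $n$ the unique index with $n \in L_k$ is $k = v_2(n - f_\infty) \geq 1$, proving that $\coprod_{k \geq 1} L_k = 2\mathbb{Z}+1$.
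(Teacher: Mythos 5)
Your proof is correct, but it takes a genuinely different route from the paper's. You package everything into the single $2$-adic quantity $v_2(n-f_\infty)$ where $f_\infty=-1-\sum_{i\geq 1}2^ix_i\in\mathbb{Z}_2$: membership $n\in L_k$ becomes $v_2(n-f_\infty)=k$, so disjointness is the well-definedness of the valuation, the inclusion in $2\mathbb{Z}+1$ and the bound $v_2\geq 1$ come from $f_\infty$ being a $2$-adic unit, and exhaustiveness reduces to $f_\infty\notin\mathbb{Z}$, which is exactly where the hypothesis that $\{x_i\}$ takes both values infinitely often enters (a rational integer has eventually constant $2$-adic digits). The paper instead argues entirely inside $\mathbb{Z}$: disjointness is obtained by comparing powers of $2$ in an explicit identity (morally the same valuation argument, unrolled), and surjectivity is proved by a greedy iterative scheme that, for a given odd $n$, successively extracts factors $n+1=2^{u_1}m_1$, $m_1+1=2^{u_2}m_2,\ldots$ and matches the exponents $u_s$ against the gaps $p_s$ between successive $1$'s in $\{x_i\}$, with termination guaranteed by the strict decrease of $|m_s+1|$ and the non-eventual-constancy of $\{x_i\}$. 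Your argument is shorter and more conceptual, and makes transparent why both halves of the hypothesis on $\{x_i\}$ are needed; the paper's argument is elementary (no completion of $\mathbb{Z}$ required) and constructive, actually computing the index $k$ for each $n$, which is in the spirit of the explicit graph constructions the lemma supports. Both are complete proofs.
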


\begin{proof}
We first prove that $L_k\cap L_h = \emptyset$ if $k\neq h$. Suppose that $h>k$ and let $s,t\in \mathbb{Z}$ be such that
$$
2^k-1-\sum_{i=1}^k2^ix_i + 2^{k+1}t = 2^h-1-\sum_{i=1}^h2^ix_i +
2^{h+1}s.
$$
Hence, $2^k(1+2t) = 2^h(1+2s) -\sum_{i=k+1}^h2^ix_i = 2^{k+1}m$ where $m\in \mathbb{Z}$ which is impossible.\\
\indent We prove now that $2\mathbb{Z}+1\subset \coprod_{k\geq 1}L_k$ by showing that for any $n\in\mathbb{Z}$ odd, there exist $k\geq 1$ and $t\in \mathbb{Z}$ such that
\begin{eqnarray}\label{congruenzona}
n = 2^k-1-\sum_{i=1}^k2^ix_i + t\cdot 2^{k+1}.
\end{eqnarray}
Let $p_1:= \min\{i\geq 1 : x_i =1\}$. If $n+1=0$, then a solution of (\ref{congruenzona}) is $k=p_1$ and $t=0$. Suppose that $n+1\neq 0$ and write $n+1 = 2^{u_1}m_1$, with $u_1\geq 1$ and $m_1$ odd. If $p_1\neq u_1$, a solution of (\ref{congruenzona}) is $k = \min\{p_1,u_1\}$ and $t = \frac{m_1-1}{2}$ if $k=u_1$ or $t = 2^{u_1-p_1-1}m_1$ if $k=p_1$. If $u_1=p_1$, then necessarily, $k>p_1$ and (\ref{congruenzona}) becomes
\begin{equation} \label{congruenzonabis}
2^{p_1}\left(m_1+1+\sum_{i=p_1+1}^k2^{i-p_1}x_i\right)=2^k(1+2t).
\end{equation}
Let $p_2:=\min\{i \geq 1 : x_{p_1+i} = 1\}$. If $m_1+1 = 0$, then a solution of (\ref{congruenzonabis}) is
$k=p_1+p_2$ and $t=0$. Suppose that $m_1+1\neq 0$ and write $m_1+1=2^{u_2}m_2$, with $u_2\geq 1$ and $m_2$ odd. If $p_2\neq u_2$, then setting $k=p_1 + \min\{p_2,u_2\}$, one can check that (\ref{congruenzonabis}) has a solution. If
$p_2=u_2$, then necessarily $k>p_1+p_2$ and (\ref{congruenzonabis}) becomes
\begin{equation}\label{congruenzonater}
2^{p_1+p_2}\left(m_2+1+\sum_{i=p_1+p_2+1}^k2^{i-p_1-p_2}x_i\right)=2^k(1+2t).
\end{equation}
We iterate the above argument: at the $s+1$ step, let $p_s:=\min\{i \geq 1 : x_{\sum_{r=1}^{s-1}p_r+i} = 1\}$;  if $m_s+1=0$, set $k=\sum_{r=1}^sp_r$ and $t=0$; if $m_s+1\neq0$, write $m_s+1=2^{u_{s+1}}m_{s+1}$, with $u_{s+1}\geq 1$ and $m_{s+1}$ odd. If $p_s\neq u_s$, one can check as above that (\ref{congruenzona}) has a solution. Clearly, the sequence $\{|m_s+1|\}_{s\geq 1}$ is strictly decreasing. If $n<0$, then there exists $s_0$ such that $m_{s_0}=-1$ and (\ref{congruenzona}) has a solution. If $n>0$, then there exists $s_0$ such that $m_s=1$ for all $s\geq s_0$. In the latter case, $u_s=1$ for all $s>s_0$. On the other hand, there are only finitely many indices $s$ such that $p_s=u_s=1$. Otherwise, there would exist $i_0$ such that $x_i=1$ for every $i>i_0$ which would contradict the hypothesis on the sequence $\{x_i\}_{\geq 1}$.
\end{proof}

\begin{center}
\begin{picture}(350,155)
\letvertex A=(5,10)\letvertex B=(35,10)\letvertex C=(75,10)\letvertex D=(75,40)
\letvertex E=(75,70)\letvertex F=(115,10)\letvertex G=(175,10)\letvertex H=(155,30)
\letvertex I=(130,30)\letvertex L=(175,50)\letvertex M=(195,30)\letvertex N=(220,30)
\letvertex O=(155,70)\letvertex R=(175,120)\letvertex S=(175,150)\letvertex T=(235,10)
\letvertex U=(275,10)\letvertex V=(275,40)\letvertex P=(195,70)\letvertex Q=(175,90)
\letvertex X=(315,10)\letvertex W=(275,70)\letvertex Y=(345,10)

\put(128,37){$\xi$}\put(232,-3){$\widetilde{\xi}$}

\put(110,-10){\textbf{Fig. 10.} A finite part of $\Gamma(\xi)$.}

\drawvertex(B){$\bullet$}\drawvertex(C){$\bullet$}
\drawvertex(D){$\bullet$}
\drawvertex(E){$\bullet$}\drawvertex(F){$\bullet$}
\drawvertex(G){$\bullet$}\drawvertex(H){$\bullet$}
\drawvertex(I){$\bullet$}\drawvertex(L){$\bullet$}
\drawvertex(M){$\bullet$}\drawvertex(N){$\bullet$}
\drawvertex(O){$\bullet$}\drawvertex(P){$\bullet$}
\drawvertex(Q){$\bullet$}\drawvertex(R){$\bullet$}
\drawvertex(S){$\bullet$}\drawvertex(T){$\bullet$}
\drawvertex(U){$\bullet$}\drawvertex(V){$\bullet$}
\drawvertex(W){$\bullet$}\drawvertex(X){$\bullet$}

\drawundirectededge(A,B){}\drawundirectededge(B,C){}
\drawundirectedcurvededge(C,D){}\drawundirectedcurvededge(D,C){}
\drawundirectedcurvededge(E,D){}\drawundirectedcurvededge(D,E){}

\drawundirectededge(C,F){} \drawundirectededge(F,G){}
\drawundirectededge(G,H){} \drawundirectededge(H,L){}
\drawundirectededge(L,M){} \drawundirectededge(M,G){}
\drawundirectededge(L,O){} \drawundirectededge(O,Q){}
\drawundirectededge(Q,P){} \drawundirectededge(P,L){}
\drawundirectededge(G,T){} \drawundirectededge(T,U){}
\drawundirectededge(U,X){} \drawundirectededge(X,Y){}

\drawundirectedcurvededge(I,H){}\drawundirectedcurvededge(H,I){}
\drawundirectedcurvededge(M,N){}\drawundirectedcurvededge(N,M){}
\drawundirectedcurvededge(Q,R){}\drawundirectedcurvededge(R,Q){}
\drawundirectedcurvededge(R,S){}\drawundirectedcurvededge(S,R){}
\drawundirectedcurvededge(U,V){}\drawundirectedcurvededge(V,U){}
\drawundirectedcurvededge(V,W){}\drawundirectedcurvededge(W,V){}

\drawundirectedloop(B){}\drawundirectedloop(E){}\drawundirectedloop(S){}
\drawundirectedloop[l](O){}\drawundirectedloop[r](P){}\drawundirectedloop[l](I){}\drawundirectedloop[r](N){}
\drawundirectedloop(T){}\drawundirectedloop(W){}\drawundirectedloop(X){}\drawundirectedloop(F){}
\end{picture}
\end{center}
\vspace{0,5cm}

\begin{teo}\label{2isomorfismo}
Let $\xi\in E_2$. Then $\Gamma_{\xi}$ is isomorphic to $\Gamma(\xi)$.
\end{teo}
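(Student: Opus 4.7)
The plan is to prove the isomorphism $\Gamma_\xi\cong\Gamma(\xi)$ by identifying both graphs as a double ray carrying a prescribed pattern of decorations, and matching balls of arbitrarily large radius around a common root via the pointed Gromov-Hausdorff convergence $(\Gamma_{2n},\xi_{2n})\to(\Gamma_\xi,\xi)$. Without loss of generality I restrict to $\xi\in E_{2,even}$ (the odd case being symmetric), so $\xi=w0\beta_10\beta_2\ldots$ and only even-height decorations appear in the backbone. By construction $\Gamma(\xi)$ is a double ray with a $(2k+2)$-decoration attached at every integer lying in $A'_k$. To see that $\Gamma_\xi$ has the same coarse structure, I appeal to the proof of Theorem \ref{totaltheorem} Parts 2 and 3 together with Lemma \ref{lemLP}: the cycle-path $\mathcal{CP}_{\xi_{2n}}=\{P_i^{2n}\}_{i=1}^t$ has a length $t$ that stabilizes for large $n$, its first $t-1$ cycles stabilize to finite cycles in $\Gamma_\xi$, while the $t$-th (central) cycle has length $k_t^{2n}=2^n\to\infty$ and opens, in the limit, into an infinite double ray $\mathcal{B}\subset\Gamma_\xi$. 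The vertex $\xi$ sits in a finite attachment bouquet connected to $\mathcal{B}$ via the stable cycle-path, and by Proposition \ref{PropSeparability} every other $2$-connected block of $\Gamma_\xi$ is a finite cycle belonging to some decoration attached to a vertex of $\mathcal{B}$; Lemma \ref{LemDistDec} at even level $2n$ ensures that only even heights appear.

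The heart of the proof is the comparison of decoration positions along $\mathcal{B}$. I parametrize $\mathcal{B}$ by $\mathbb{Z}$ with origin $u_\infty:=\lim_n u_n$, where $u_n$ is the vertex of the central cycle of $\Gamma_{2n}$ closest to $\xi_{2n}$ via the stable cycle-path; write $q_n$ for the vertex of this cycle playing the role of $0^{2n}$ and $d_n:=d(u_n,q_n)$ along the cycle. By Lemma \ref{LemDistDec}, vertices of the central cycle at distance $2^k+l\cdot 2^{k+1}$ from $q_n$ (for $l\geq 0$) carry $(2k+2)$-decorations, so in the $\mathbb{Z}$-coordinates on $\mathcal{B}$ the integer position $m$ carries a $(2k+2)$-decoration if and only if $m+d_n\equiv 2^k\pmod{2^{k+1}}$ (a congruence which stabilizes in $n\geq k+1$). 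Since $A'_k$ is defined by $m\equiv 2^k-1-\sum_{i=1}^k 2^i\beta_{i+1}\pmod{2^{k+1}}$, the matching with $\Gamma(\xi)$ reduces to the claim that, for every $k\geq 0$ and every $n\geq k+1$,
\begin{equation*}
d_n\equiv 1+\sum_{i=1}^{k} 2^{i}\beta_{i+1}\pmod{2^{k+1}}.
\end{equation*}
I would prove this by induction on $n$, using the explicit $\pi_{2n+2}^{-1}$ picture in Figures 5 and 6 together with Lemma \ref{lemLP}: when the central cycle doubles from length $2^n$ to length $2^{n+1}$, the old reference vertex $u_n$ lifts to one of two possible points $u_{n+1}$ in the new cycle, the choice being governed by the last letter of $\xi_{2n+2}$, which in the reduced form equals $\beta_{n+1}$; this contributes exactly $0$ or $2^n$ to $d_{n+1}-2d_n$ according to $\beta_{n+1}\in\{0,1\}$, which is precisely the increment required to extend the congruence to the next value of $k$.

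The main obstacle is this sign and offset bookkeeping. The defining formula for $A'_k$ features the subtraction $2^k-1-\sum_{i=1}^k 2^i\beta_{i+1}$, whose minus sign and offset $-1$ encode the specific orientation convention hidden in the $\pm 90^\circ$ rotations defining $\pi_{n+1}^{-1}$ in Figure 5. Verifying them amounts to checking carefully that the $\beta_{n+1}=0$ and $\beta_{n+1}=1$ halves of the doubled central cycle are correctly paired with the two possible values of $d_{n+1}-2d_n$, at both parities of $n$; this parity/orientation check is the only genuinely combinatorial step of the argument. Once it is settled, the induction closes and produces the rooted isomorphism $(\Gamma(\xi),0)\cong(\Gamma_\xi,u_\infty)$ via the ball comparison described above, hence the desired unrooted isomorphism $\Gamma_\xi\cong\Gamma(\xi)$.
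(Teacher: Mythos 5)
Your overall strategy coincides with the paper's: reduce the isomorphism to a statement about the positions of the $(2k+2)$-decorations along the limit of the central cycles, and prove that statement by induction on the level using the recursive structure of the graphs (the paper does this via the diagrams $\overline{D}_{2t}$ and the rule RCR, which is exactly your ``explicit $\pi^{-1}$ picture''). The congruence you isolate, $d_n\equiv 1+\sum_{i=1}^{k}2^i\beta_{i+1}\pmod{2^{k+1}}$, is precisely equivalent to the paper's Claim that the middle vertex $v_{2t}$ of $\overline{D}_{2t}$ sits at coordinate $2^{t-1}-1-\sum_{i=1}^{t-1}2^i\beta_{i+1}$ relative to $\hat{\xi}_{2t}$. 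So you have correctly identified what must be proven.

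The problem is that you do not prove it, and the one concrete piece of the induction you do write down is wrong. You assert that the passage from level $2n$ to level $2n+2$ ``contributes exactly $0$ or $2^n$ to $d_{n+1}-2d_n$''. This cannot be the right recursion: the two-fold covering of the central cycle of $\Gamma_{2n}$ by that of $\Gamma_{2n+2}$ is a local isometry, so the distance between the lifts of $u_n$ and of $0^{2n}$ is not doubled; the correct increment (which one reads off the paper's Claim) is $d_{n+1}-d_n=2^n\beta_{n+1}\in\{0,2^n\}$, with no factor $2$ in front of $d_n$. Indeed, your recursion $d_{n+1}=2d_n+\varepsilon_n 2^n$ forces $d_n\equiv 0\pmod{2^{n-1}}$, which contradicts the target congruence (which forces $d_n$ odd). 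A second, related issue: you define $d_n$ as an unsigned distance ``along the cycle'', hence $d_n\le 2^{n-1}$, while the residue $1+\sum_{i=1}^{n-1}2^i\beta_{i+1}$ can exceed $2^{n-1}$ whenever $\beta_n=1$; to make the congruence meaningful one must work with a signed coordinate obtained by unfolding the central cycle into the diagram $\overline{D}_{2n}$ embedded in $\mathbb{Z}$ and fixing an orientation consistently across levels --- this is exactly what the paper's embedding and its two case pictures ($\beta_t=0$ versus $\beta_t=1$) accomplish. You explicitly defer this ``sign and offset bookkeeping'' as a routine check, but it is the entire content of the proof, and your sketch of it does not close the induction. (A smaller omission: to compare balls around matching roots you should, as the paper does, first replace $\xi$ by the cofinal word $\hat{\xi}=010\beta_20\beta_3\ldots$, so that the root lies on the central cycle and carries a $2$-decoration matching the vertex $0\in A_0'$ of $\Gamma(\xi)$.)
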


\begin{proof}
We again only consider the case where $\xi\in E_{2,even}$ (the case $\xi\in E_{2,odd}$ can be treated analogously). For any $t\geq 1$, consider the diagram $\overline{D}_{2t}$ encoding $\Gamma_{2t}$. Consider also $\hat{\xi}:=010\beta_20\beta_3\dots$. Note that $\hat\xi$ and $\xi$ are
cofinal, $\hat\xi\sim\xi$, so that, by Proposition \ref{cofinality}, $\Gamma_{\xi}=\Gamma_{\hat{\xi}}$.
Arguments as in Proof of Proposition \ref{PropInverseCovering} imply that $\hat{\xi}_{2t}\in V(\Gamma_{2t})$ is situated on the central cycle of this graph, and therefore can be identified with a vertex of $\overline{D}_{2t}$ (also denoted by $\hat{\xi}_{2t}$). Let $v_{2t}$ be the middle vertex in $\overline{D}_{2t}$; it is labeled by $2t$ (see the figures below).\\
\indent It is convenient to embed the diagram $\overline{D}_{2t}$ in $\mathbb{Z}$, so that every vertex of $\overline{D}_{2t}$ has a \emph{coordinate} (in addition to its label). Suppose that we embed $\overline{D}_{2t}$ in $\mathbb{Z}$ so that the coordinate of $\hat{\xi}_{2t}$ is $0$.\\
\indent \textit{Claim}: The coordinate of $v_{2t}$ is $2^{t-1}-1-\sum_{i=1}^{t-1}2^i\beta_{i+1}$.\\
We prove the claim by induction on $t$. For $t=2$, $v_4$ has coordinate $1$ if $\beta_2=0$, while it has coordinate $-1$ if $\beta_2=1$, and the claim holds.\\
\indent Suppose that the assertion is true for $t-1$ and suppose that $\beta_{t-1}=0$ (the case $\beta_{t-1}=1$ is treated similarly). The diagram $\overline{D}_{2t-2}$ is

\vspace{1cm}

\hspace{2.5cm}\begin{picture}(0,0)%
\includegraphics{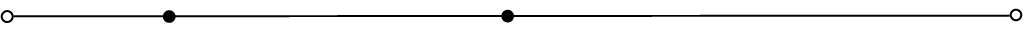}%
\end{picture}%
%
%
\setlength{\unitlength}{4342sp}%
\begingroup\makeatletter\ifx\SetFigFont\undefined%
\gdef\SetFigFont#1#2#3#4#5{%
  \reset@font\fontsize{#1}{#2pt}%
  \fontfamily{#3}\fontseries{#4}\fontshape{#5}%
  \selectfont}%
\fi\endgroup%
\begin{picture}(4465,297)(2011,-1447)
\put(2634,-1387){\makebox(0,0)[lb]{\smash{{\SetFigFont{10}{12.0}{\rmdefault}{\mddefault}{\updefault}{\color[rgb]{0,0,0}$\hat{\xi}_{2t-2}$}%
}}}}
\put(4083,-1379){\makebox(0,0)[lb]{\smash{{\SetFigFont{10}{12.0}{\rmdefault}{\mddefault}{\updefault}{\color[rgb]{0,0,0}$v_{2t-2}$}%
}}}}
\end{picture}%

\vspace{0.5cm}

\noindent If $\beta_t=0$, then by RCR, the diagram $\overline{D}_{2t}$ is

\vspace{1cm}

\hspace{2.5cm}\begin{picture}(0,0)%
\includegraphics{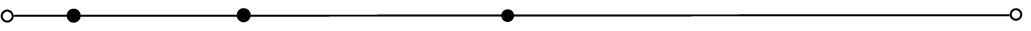}%
\end{picture}%
%
%
\setlength{\unitlength}{4342sp}%
\begingroup\makeatletter\ifx\SetFigFont\undefined%
\gdef\SetFigFont#1#2#3#4#5{%
  \reset@font\fontsize{#1}{#2pt}%
  \fontfamily{#3}\fontseries{#4}\fontshape{#5}%
  \selectfont}%
\fi\endgroup%
\begin{picture}(4465,317)(2011,-1463)
\put(4083,-1360){\makebox(0,0)[lb]{\smash{{\SetFigFont{10}{12.0}{\rmdefault}{\mddefault}{\updefault}{\color[rgb]{0,0,0}$v_{2t}$}%
}}}}
\put(2231,-1403){\makebox(0,0)[lb]{\smash{{\SetFigFont{10}{12.0}{\rmdefault}{\mddefault}{\updefault}{\color[rgb]{0,0,0}$\hat{\xi}_{2t}$}%
}}}}
\end{picture}%

\vspace{0.5cm}

\noindent Hence, recalling that the length of $\overline{D}_{2t}$ is $2^t$, the coordinate of $v_{2t}$ is
$$
2^{t-2}+2^{t-2}-1-\sum_{i=1}^{t-2}2^i\beta_{i+1}=2^{t-1}-1-\sum_{i=1}^{t-1}2^i\beta_{i+1}.
$$
On the other hand, if $\beta_t=1$, the diagram $\overline{D}_{2t}$ is

\vspace{1cm}

\hspace{2.5cm}\begin{picture}(0,0)%
\includegraphics{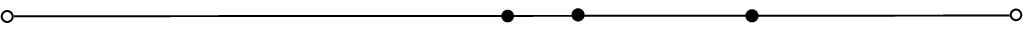}%
\end{picture}%
%
%
\setlength{\unitlength}{4342sp}%
\begingroup\makeatletter\ifx\SetFigFont\undefined%
\gdef\SetFigFont#1#2#3#4#5{%
  \reset@font\fontsize{#1}{#2pt}%
  \fontfamily{#3}\fontseries{#4}\fontshape{#5}%
  \selectfont}%
\fi\endgroup%
\begin{picture}(4465,312)(2011,-1457)
\put(4071,-1377){\makebox(0,0)[lb]{\smash{{\SetFigFont{10}{12.0}{\rmdefault}{\mddefault}{\updefault}{\color[rgb]{0,0,0}$v_{2t}$}%
}}}}
\put(4408,-1397){\makebox(0,0)[lb]{\smash{{\SetFigFont{10}{12.0}{\rmdefault}{\mddefault}{\updefault}{\color[rgb]{0,0,0}$\hat{\xi}_{2t}$}%
}}}}
\end{picture}%

\vspace{0.5cm}

Thus, in this case, the coordinate of $v_{2t}$ is
$$
2^{t-2}-1-\sum_{i=1}^{t-2}2^i\beta_{i+1}-2^{t-2}=2^{t-1}-1-\sum_{i=1}^{t-1}2^i\beta_{i+1},
$$
and the claim is proven.\\
\indent Let $R_t=d_{\Gamma_{2t}}(\hat{\xi}_{2t},0^{2t})$; we prove by induction on $t$, that for any $t\geq 2$, $B_{\Gamma(\xi)}(0,R_t)\simeq B_{\Gamma_{2t}}(\hat{\xi}_{2t},R_t)$. If $t=2$, then $R_2=1$ and, clearly, $B_{\Gamma(\xi)}(0,1)\simeq B_{\Gamma_{4}}(\hat{\xi}_{4},1)$. Suppose that $B_{\Gamma(\xi)}(0,R_{t-1})\simeq
B_{\Gamma_{2t-2}}(\hat{\xi}_{2t-2},R_{t-1})$. Then, by the RCR and the Claim, the only vertex in $D_{2t}$ labeled by $2t$ is situated at distance $2^{t-1}-1-\sum_{i=1}^{t-1}2^i\beta_{i+1}$ from $\hat{\xi}_{2t}$. Moreover, any two consecutive vertices in $\overline{D}_{2t}$ labeled by $2k+2$ are situated at distance $2^{k+1}$ from each other. Thus, by definition of the graph $\Gamma(\xi)$, $B_{\Gamma(\xi)}(0,R_t)\simeq
B_{\Gamma_{2t}}(\hat{\xi}_{2t},R_t)$. Finally, since $\hat{\xi}\in E_2$, $R_t$ tends to infinity as $t\to\infty$ and since $\Gamma_{\hat{\xi}}=\Gamma_\xi$, the proof of the theorem is completed.
\end{proof}

\section{The typical case: limit graphs with one end} \label{chapter1end}

\subsection{Classification of infinite Schreier graphs with one end} \label{section1end}

It follows from Theorem \ref{totaltheorem} that the set of infinite words whose associated infinite orbital Schreier graph has one end is
$$
E_1 = \{\alpha_1\beta_1\alpha_2\beta_2\ldots, \
\alpha_i,\beta_j\in \{0,1\} | \  \textrm{$\{\alpha_i\}_{i\geq 1}$
and $\{\beta_j\}_{j\geq 1}$ both contain infinitely many $1$'s}\}.
$$
We prove in this subsection Part \emph{3.} of Corollary \ref{ThmClassif} by classifying all limit graphs $\Gamma_\xi$, $\xi\in E_1$, up to isomorphism (of unrooted graphs). We begin with the following lemma:

\begin{lem}\label{lemmaonlyoneend}
Let $\xi\in E_1$ and let $\{P_i\}_{i\geq 1},\{Q_j\}_{j\geq 1}$ be two infinite cycle-paths in $\Gamma_{\xi}$. Then, there exist $i_0, j_0\in \mathbb{N}$ such that $P_{i_0+k}= Q_{j_0+k}$ for every $k\geq 0$.
\end{lem}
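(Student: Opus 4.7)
The plan is to use the block-cut tree of $\Gamma_\xi$. By Proposition~\ref{PropSeparability}, $\Gamma_\xi$ is separable and each of its blocks is either a finite cycle or a single edge. Let $T$ denote the corresponding block-cut tree, whose vertex set consists of the blocks and the cut vertices of $\Gamma_\xi$, with incidence as edges. Any infinite cycle-path $\{P_i\}_{i\geq 1}$ determines a ray $\mathcal{R}_P=(P_1,v_1,P_2,v_2,\ldots)$ in $T$, where $v_i$ is the unique cut vertex in $P_i\cap P_{i+1}$; conversely such a ray in $T$ alternating between cycle-blocks and cut vertices encodes a cycle-path in $\Gamma_\xi$.

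The key step is to show that $T$ itself has exactly one end. Because every block of $\Gamma_\xi$ is finite, one obtains a natural correspondence between the ends of $\Gamma_\xi$ and the ends of $T$: any ray in $\Gamma_\xi$ must eventually leave every fixed block, hence it traverses infinitely many blocks and cut vertices and projects to a ray in $T$; conversely any ray in $T$ lifts to a ray in $\Gamma_\xi$ by concatenating paths through successive blocks. A finite separator in $T$, consisting of finitely many blocks and cut vertices, corresponds (using finiteness of blocks) to a finite separator in $\Gamma_\xi$, so equivalence of rays is preserved in both directions. Since $\xi\in E_1$, Theorem~\ref{totaltheorem} gives that $\Gamma_\xi$ is one-ended, and so is $T$.

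I then invoke the elementary tree-theoretic fact that in a tree with exactly one end any two rays share a common infinite tail (otherwise they could be separated by a finite set, contradicting one-endedness). Applied to $\mathcal{R}_P$ and $\mathcal{R}_Q$, and using that both rays alternate between cycle-blocks (at odd positions) and cut vertices (at even positions), the shared tail must begin at a common cycle-block, yielding indices $i_0,j_0$ with $P_{i_0+k}=Q_{j_0+k}$ for every $k\geq 0$.

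The main obstacle I anticipate is the rigorous verification that the end structures of $\Gamma_\xi$ and of its block-cut tree $T$ coincide. This step is where the finiteness of every block enters crucially: it lets one pass finite separators between $\Gamma_\xi$ and $T$ without loss. Once this correspondence is in place, the rest of the argument is a straightforward tree computation.
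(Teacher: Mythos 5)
Your proof is correct, and it reaches the conclusion by a genuinely different (though closely related) route. The paper argues directly on $\Gamma_\xi$: it takes rays $\mathcal{R}_1\subset\{P_i\}$, $\mathcal{R}_2\subset\{Q_j\}$, notes they are equivalent since $\Gamma_\xi$ is one-ended, and then runs a case analysis on $\{P_i\}\cap\{Q_j\}$ (disjoint, finite intersection, infinite intersection), in the last case producing two, three or four disjoint paths between two intersection points and contradicting the separability statement of Proposition \ref{PropSeparability}. You instead package the block structure into the block-cut tree $T$, transfer the one-endedness of $\Gamma_\xi$ (Theorem \ref{totaltheorem}) to $T$ via finiteness of the blocks, and invoke the standard fact that in a one-ended tree any two rays share a tail. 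Your version is cleaner and more portable -- it works verbatim for any locally finite one-ended graph with finite blocks, and it replaces the paper's hand-counting of disjoint paths by a single appeal to tree end theory -- at the cost of having to set up and justify the end correspondence between $\Gamma_\xi$ and $T$, which you correctly identify as the crux and which does go through because all blocks are finite. Two small points you should make explicit: (i) that the sequence $P_1,v_1,P_2,v_2,\dots$ really is an injective ray in $T$, which requires that no vertex of $\Gamma_\xi$ lies on three of the cycles; this follows from $4$-regularity, since each cycle through a vertex consumes two edge-ends there, so every vertex lies in at most two blocks; and (ii) that if the common tail of the two rays in $T$ starts at a cut vertex you shift by one step to start at a block, which you gesture at but do not state.
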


\begin{proof}
Let $\mathcal{R}_1\subset\{P_i\}_{i\geq 1}$ and $\mathcal{R}_2\subset\{Q_j\}_{j\geq 1}$ be two rays. Since $\Gamma_{\xi}$ has one end, $\mathcal{R}_1$ and $\mathcal{R}_2$ are equivalent. Suppose that $\{P_i\}_{i\geq 1}$
and $\{Q_j\}_{j\geq 1}$ are disjoint. Since $\Gamma_{\xi}$ is separable with blocks either cycles or single edges, there exists a cut vertex which separates $\{P_i\}_{i\geq 1}$ from $\{Q_j\}_{j\geq 1}$; this contradicts the fact that
$\mathcal{R}_1$ and $\mathcal{R}_2$ are equivalent.\\
\indent If there is only a finite number of vertices belonging both to $\{P_i\}_{i\geq 1}$ and $\{Q_j\}_{j\geq 1}$, then there are integers $I,J$ such that $\{P_i\}_{i\geq I}$ and $\{Q_j\}_{j\geq J}$ are disjoint. Then, $V(\{P_i\}_{i=1}^{I-1}\cup \{Q_j\}_{j=1}^{J-1})$ is a finite subset of vertices which separates $\{P_i\}_{i\geq I}$ from $\{Q_j\}_{j\geq J}$ which contradicts the fact that $\mathcal{R}_1$ and $\mathcal{R}_2$ are equivalent.\\
\indent Suppose that $|\{P_i\}_{i\geq 1}\cap\{Q_j\}_{j\geq 1}|=\infty$ and consider the minimal $i_0,j_0$ such that $P_{i_0}$ and $Q_{j_0}$ have a vertex $v$ in common. Let $w\in P_{i_0+k}\cap Q_{j_0+k'}$ for some $k,k'\geq 0$ be at minimal distance from $v$. Then, $\{P_i\}_{i=i_0}^{i_0+k}$ and $\{Q_j\}_{j=j_0}^{j_0+k'}$ must coincide. Indeed, suppose that this is not the
case: if $k=k'=0$, then there are four disjoint paths joining $v$ to $w$ which contradicts Part \emph{1.} of Proposition \ref{PropSeparability}. If $k=0$ or $k'=0$, then there are three disjoint paths joining $v$ to $w$ which is again a contradiction. Finally, if $k,k'\neq 0$, then there are two disjoint paths joining $v$ to $w$
which again contradicts Proposition \ref{PropSeparability}.
\end{proof}

\begin{prop}  \label{PropAlg}
An element $\xi\in\{0,1\}^\omega$, $\xi$ not cofinal to $1^\omega$, belongs to $E_1$ if and only if there exists a unique triple $(k,\{m_l\}_{l\geq 0},\{t_l\}_{l\geq 0})$ where $k\geq 1$ and $m_0\geq 0$ are integers (and $m_0$ is even); $t_0=0$; and $\{m_l\}_{l\geq 1}$, $\{t_l\}_{l\geq 1}$ are sequences of strictly positive integers (and the $m_l$'s are even), such that $\xi$ can be written as
\begin{equation} \label{CanFormOfXi}
\xi=0^{k-1}1(0x^0_{1}0x^0_{2}\dots 0x^0_{\frac{m_0}{2}})1^{t_1}(0x^1_{1}0x^1_{2}\dots 0x^1_{\frac{m_1}{2}})1^{t_2}\dots.
\end{equation}
with $x_i^j\in\{0,1\}$ for all $i,j$.\\
\indent If $\xi=w1^\omega$ for some $w\in X^\ast$, then there exists a unique triple $(k,\{m_l\}_{l=0}^{l_0},\{t_l\}_{l=0}^{l_0})$ where $k\geq 1$ and $m_0\geq 0$ are integers (and $m_0$ is even);
$t_0=0$; and $\{m_l\}_{l=1}^{l_0}$, $\{t_l\}_{l=1}^{l_0}$ are finite sequences of strictly positive integers (and the $m_l$'s are even),  such that $\xi$ can be written as
$$\xi=0^{k-1}1(0x^0_{1}0x^0_{2}\dots 0x^0_{\frac{m_0}{2}})1^{t_1}(0x^1_{1}0x^1_{2}\dots 0x^1_{\frac{m_1}{2}})1^{t_2}\dots
.1^{t_{l_0}}(0x^{l_0}_{1}0x^{l_0}_{2}\dots 0x^{l_0}_{\frac{m_{l_0}}{2}})1^\omega.$$
\end{prop}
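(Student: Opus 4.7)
The strategy is to define a canonical greedy parsing of $\xi$ from left to right and show that its output is the unique triple satisfying the stated conditions. Set $k$ to be the position of the first occurrence of $1$ in $\xi$, and put $c_0=k+1$. Then, iteratively for each $l\geq 0$, define $m_l$ to be the smallest non-negative even integer with $\xi_{c_l+m_l}=1$, define $t_{l+1}$ to be the length of the maximal run of $1$'s in $\xi$ starting at position $c_l+m_l$, and set $c_{l+1}=c_l+m_l+t_{l+1}$; read off $x_i^l=\xi_{c_l+2i-1}$ for $1\leq i\leq m_l/2$. The factorisation $\xi=0^{k-1}\cdot 1\cdot B_0\cdot 1^{t_1}\cdot B_1\cdots$ then holds tautologically, and this is the only triple compatible with the requirements: the position of the first $1$ forces $k$; the constraints that each $B_l$ has $0$'s at its odd-indexed positions and that $1^{t_{l+1}}$ begins right after $B_l$ force $m_l$ to be the smallest non-negative even integer with $\xi_{c_l+m_l}=1$; and the constraint that $B_{l+1}$ starts with $0$ (or, in the second statement, that the tail is $1^\omega$) forces $t_{l+1}$ to be the maximal run length.

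The technical heart of the argument is to verify that the parsing is well-defined, i.e., that every $m_l$ and every $t_l$ (in the non-cofinal case) is finite. Finiteness of $t_l$ amounts to the non-existence of infinite runs of $1$'s in $\xi$, which is precisely the hypothesis $\xi\not\sim 1^\omega$. Finiteness of $m_l$ is where the hypothesis $\xi\in E_1$ is essential: since both $\{\alpha_i\}_{i\geq 1}$ and $\{\beta_j\}_{j\geq 1}$ contain infinitely many $1$'s, both parity classes of $\xi$-positions carry infinitely many $1$'s, so a $1$ at a position of parity matching $c_l$ and at least $c_l$ always exists, whence $m_l<\infty$. For $l\geq 1$, the maximality of $t_l$ ensures $\xi_{c_l}=0$, so that $m_l\geq 2$ automatically.

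For the converse, assume $\xi$ admits the canonical decomposition and let $p_l$ denote the starting position of $1^{t_l}$; from $p_{l+1}=p_l+t_l+m_l$ and the evenness of $m_l$ we obtain $p_{l+1}\equiv p_l+t_l\pmod{2}$. If infinitely many $t_l$'s are odd, the parities of the $p_l$'s alternate infinitely often, so both parities of positions receive infinitely many $1$'s; otherwise cofinitely many $t_l$'s are positive and even, and each such run $1^{t_l}$ occupies consecutive positions of both parities, again producing infinitely many $1$'s of each parity; hence $\xi\in E_1$. The case $\xi=w1^\omega$ is treated in parallel: the converse is then immediate since the form explicitly ends in $1^\omega$, while existence and uniqueness follow by running the same parsing, which terminates at the unique index $l_0$ where the maximal run starting at $c_{l_0}+m_{l_0}$ extends into the infinite tail, yielding $t_{l_0+1}=\infty$.
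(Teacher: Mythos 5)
Your proof is correct and follows essentially the same route as the paper's: a greedy left-to-right parsing of $\xi$ for existence and uniqueness of the triple (the paper presents this as an explicit pseudocode procedure, branching on whether $m_0=0$), and a direct check that the canonical form forces infinitely many $1$'s in both parity classes of positions for the converse. Your parity bookkeeping for the converse and your explicit finiteness and uniqueness verifications are somewhat more detailed than what the paper writes out, but the underlying argument is the same.
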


\begin{cor} \label{RemonSEC1end}
Observe that the index of stability of $\xi$ is equal to $k+m_0$ in notation of (\ref{CanFormOfXi}). Hence, the SEC-sequence corresponding to $\xi\in E_1$, $\xi\neq w1^\omega$, is
\begin{equation} \label{SEC}
S^{k+m_0}E^{t_1}(EC)^{\frac{m_1}{2}}E^{t_2}(EC)^{\frac{m_2}{2}}\ldots
\end{equation}
while the SEC-sequence corresponding to $\xi=w1^\omega$ is
$$
S^{k+m_0}E^{t_1}(EC)^{\frac{m_1}{2}}E^{t_2}\ldots E^{t_{l_0}}(EC)^{\frac{m_{l_0}}{2}}E^{\omega}.
$$
\end{cor}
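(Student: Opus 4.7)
I will decode the canonical form (\ref{CanFormOfXi}) letter-by-letter, using Proposition~\ref{PropInverseCovering} to classify each transition $\xi_n\to\xi_{n+1}$ as $S$, $E$, or $C$, and the description of $\pi_{n+1}^{-1}$ in its proof to decide whether a given prefix $\xi_n$ lies in $\mathcal{D}(0^n)$ or $\mathcal{D}(0^n)^c$. The key recursive fact I will extract from that description is: for $u\in V(\Gamma_n)\setminus\{0^n\}$ with $u\in\mathcal{D}(0^n)^c$, the preimages $u\cdot 0$ and $u\cdot 1$ in $\Gamma_{n+1}$ land respectively in $\mathcal{D}(0^{n+1})$ (attached to $0^{n+1}$) and in $\mathcal{D}(0^n 1)\subset\mathcal{D}(0^{n+1})^c$ (attached to $0^n 1$); conversely, for $u\in\mathcal{D}(0^n)\setminus\{0^n\}$, both preimages $u\cdot 0$ and $u\cdot 1$ land in $\mathcal{D}(0^{n+1})^c$ (in the two copies of $\mathcal{D}(0^n)$ produced by splitting).

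The prefix $0^{k-1}1$ is handled immediately: for $n<k$, $\xi_n=0^n$ forces stability, so the first $k$ letters of the SEC are $S$ by the convention in the SEC definition. For the mixed $m_0$-block, starting from $\xi_k=0^{k-1}1$ on the central cycle (hence in $\mathcal{D}(0^k)^c$), appending the forced $0$ at position $k+1$ places $\xi_{k+1}$ in $\mathcal{D}(0^{k+1})$; by Proposition~\ref{PropInverseCovering} this is an expansion. Appending the free character $x_1^0$ at position $k+2$ then yields a contraction (since $\xi_{k+1}\in\mathcal{D}(0^{k+1})$), and returns $\xi_{k+2}$ to $\mathcal{D}(0^{k+2})^c$ on the central cycle of $\Gamma_{k+2}$. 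Iterating this $EC$ argument through the $m_0/2$ pairs of the block shows that $\xi_{k+m_0}$ lies again on the central cycle, so $|\mathcal{CP}_{\xi_{k+m_0}}|=1$, which gives $z_\xi\geq k+m_0$.

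For $n>k+m_0$, the $1^{t_1}$-block begins from $\xi_{k+m_0}\in\mathcal{D}(0^{k+m_0})^c$; each appended $1$ sends the state into the right-branch decoration $\mathcal{D}(0^n 1)\subset\mathcal{D}(0^{n+1})^c$, triggering an expansion. Iterating across the $t_1$ appendings increases $|\mathcal{CP}|$ from $1$ to $1+t_1$ and yields $E^{t_1}$. Subsequent $m_l$- and $1^{t_l}$-blocks produce $(EC)^{m_l/2}$ and $E^{t_l}$ by the same mechanism applied at deeper levels, since at the start of each $m_l$-block the state lies in $\mathcal{D}(0^n)^c$ and the alternation between forced $0$'s and free $x_j^l$'s reproduces the $EC$ oscillation. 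Crucially, within every $m_l$-block with $l\geq 1$, the cycle-path length oscillates between $1+\sum_{i\leq l}t_i$ and $2+\sum_{i\leq l}t_i$, which is always at least $2$ because $t_1\geq 1$. Hence $|\mathcal{CP}_{\xi_n}|\geq 2$ for every $n>k+m_0$, which proves $z_\xi=k+m_0$, and the letters assemble into the claimed expression for the SEC-sequence in both the generic and the $w1^\omega$ cases.

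The main obstacle is to rigorously justify the $EC$-alternation in every mixed block by induction on the block's internal position. This hinges on combining Proposition~\ref{PropInverseCovering} with the explicit recursive structure of $\pi_{n+1}^{-1}$ and with the parity-dependent length update in Lemma~\ref{lemLP}, which is needed to verify that the newly created cycle in each expansion is followed by a contraction undoing it exactly, so that the state lands back on the central cycle of the appropriate graph.
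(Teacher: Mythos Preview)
Your proof is correct and follows essentially the same approach as the paper, which simply cites the description of $\pi_{n+1}^{-1}$ from the proof of Proposition~\ref{PropInverseCovering} together with Remark~\ref{RemonSEC}; you have spelled out the letter-by-letter analysis that the paper leaves implicit. One minor remark: the appeal to Lemma~\ref{lemLP} in your last paragraph is unnecessary, since the $EC$-alternation and the return to $|\mathcal{CP}|=1$ (or to the baseline $1+\sum_{i\le l}t_i$) follow purely from Proposition~\ref{PropInverseCovering} and the placement rules you already extracted from Figures~5--6; cycle \emph{lengths} play no role in determining the SEC-letters.
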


\begin{proof} Corollary follows from the description of the inverse covering map $\pi_{n+1}^{-1}$, as in Proof of Proposition \ref{PropInverseCovering} and from Remark \ref{RemonSEC}.
\end{proof}

\begin{proof}[Proof of Proposition \ref{PropAlg}]
Suppose that $\xi\in\{0,1\}^\omega$ has the form (\ref{CanFormOfXi}). If we write $\xi=\alpha_1\beta_1\alpha_2\beta_2\dots$ where $\alpha_i,\beta_i\in\{0,1\}$, then, since $t_l\geq 1$ for all $l\geq 1$, it follows that both sequences $\{\alpha_i\}_{i\geq 1}$ and $\{\beta_i\}_{i\geq 1}$ take infinitely often the value $1$. This implies by definition that $\xi\in E_1$.\\
\indent The proof of the converse is constructive; before writing down the explicit algorithm, introduce the following notation: given two sequences of integers $\{m_l\}_{l\geq 0}$ and
$\{t_l\}_{l\geq 0}$, define for all $i,j\geq 0$, $M_i:=\sum_{l=0}^im_l$, $T_j:=\sum_{l=0}^jt_l$. Finally, set $M_{-1}:=0$. Let $\xi\in E_1$ and for all $i\geq 1$, denote by $X(i)\in\{0,1\}$ the $i$-th letter in $\xi$. The output of the following algorithm is a triple $(k,\{m_l\}_{l\geq 0},\{t_l\}_{l\geq 0})$ satisfying the assumptions of the proposition.

        \noindent \textbf{procedure}($\xi$)\\
        $k:=\min\{n\geq 1; X(n)=1\}$\\
        $M_{-1}:=0$\\
       \begin{tabbing}\textbf{if} \=$X(k+1)=0$ \textbf{then} \hspace{7cm} \textbf{if} \=$X(k+1)=1$ \textbf{then}\\
        \>\textbf{for} \=$j=0$ \textbf{to} $"\infty"$        \hspace{7.5cm} $m_0:=0$\\
        \>\>$m_j:=2$                                         \hspace{8.4cm} $M_0:=0$\\
        \>\>$n:=k+M_{j-1}+T_j+3$                           \hspace{6.1cm} \textbf{for} \=$j=1$ \textbf{to} $"\infty"$\\
        \>\>\textbf{while} \=X(n)=0                          \hspace{8.1cm} $t_j:=1$\\
            \>\>\>$m_j:=m_j+2$                               \hspace{7.3cm} $n:=k+M_{j-1}+T_{j-1}+2$\\
            \>\>\>$n:=n+2$                                   \hspace{7.7cm} \textbf{while} \=X(n)=1\\
        \>\>\textbf{end while}                               \hspace{9.7cm} $t_j:=t_j+1$\\
        \>\>$M_j:=M_{j-1}+m_j$                               \hspace{8.6cm} $n:=n+1$\\
        \>\>\textbf{output} $m_j$                            \hspace{8.7cm} \textbf{end while}\\
       \>\>$t_{j+1}:=0$                                      \hspace{9cm} $T_j:=T_{j-1}+t_j$   \\
        \>\>\textbf{while} $X(n)=1$                          \hspace{8cm} \textbf{output} $t_j$\\
            \>\>\>$t_{j+1}:=t_{j+1}+1$                       \hspace{7cm} $m_j:=0$\\
            \>\>\>$n:=n+1$                                   \hspace{7.7cm} \textbf{while} $X(n)=0$\\
        \>\>\textbf{end while}                               \hspace{9.8cm} $m_j:=m_j+2$\\
        \>\>$T_{j+1}:=T_j+t_{j+1}$                           \hspace{8.8cm} $n:=n+2$\\
        \>\>\textbf{output} $t_{j+1}$                        \hspace{8.5cm} \textbf{end while}\\
        \>\textbf{end for}                                   \hspace{9.6cm} $M_j:=M_{j-1}+m_j$\\
                                                             \hspace{11.4cm} \textbf{output} $m_j$\\
                                                             \hspace{10.8cm} \textbf{end for}\\
                                                             \hspace{10.2cm} \textbf{end procedure}
       \end{tabbing}
The second part of the proposition follows identically after we put $t_{l_0+1}=\infty$.
\end{proof}

\indent The main result of this subsection is the following characterization of words in $E_1$ with isomorphic unrooted Schreier graphs.

\begin{thm}\label{ThmBijection}
Let $\xi, \eta \in E_1$.

\begin{enumerate}
\item If $\xi=u1^{\omega}$ and $\eta=w1^{\omega}$ with $u,w\in\{0,1\}^\ast$, then $\Gamma_\xi=\Gamma_\eta$. If $\xi=u1^{\omega}$ but $\eta\neq w1^{\omega}$ for any $w\in\{0,1\}^\ast$, then $\Gamma_\xi\not\simeq\Gamma_\eta$.
\item Suppose that $\xi\neq u1^{\omega}$ and $\eta\neq w1^{\omega}$ for any $u,w\in\{0,1\}^\ast$; and consider the respective triples $(k,\{m_l\}_{l\geq 0},\{t_l\}_{l\geq 0})$ and $(k',\{m'_l\}_{l\geq 0},\{t'_l\}_{l\geq 0})$ that give the canonical representation of $\xi$ and $\eta$
$$
\xi = 0^{k-1}1(0x^0_1\ldots 0x^0_{m_0/2})1^{t_1}(0x^1_1\ldots
0x^1_{m_1/2})1^{t_2}\ldots,
$$
$$
\eta = 0^{k'-1}1(0y^0_1\ldots 0y^0_{m'_0/2})1^{t'_1}(0y^1_1\ldots
0y^1_{m'_1/2})1^{t'_2}\ldots
$$

where $x^j_i, y^j_i\in \{0,1\}$.  Then $\Gamma_\eta\simeq\Gamma_\xi$ if and only if there exist $r,s\geq 1$
such that, for each $n\geq 0$,
\begin{enumerate}
\item\label{nuova8} $m_{r+n}=m'_{s+n}$, $t_{r+n+1}=t'_{s+n+1}$;
\item $k+T_r+M_{r-1} = k'+T'_s+M'_{s-1}$, where $M_i=\sum_{l=0}^im_l$ and $T_j=\sum_{l=0}^jt_l$.
\item either $y^{s+n}_p=x_p^{r+n}$ or $y_p^{s+n}=1-x_p^{r+n}$, for $p=1,\ldots, m_{r+n}/2$.
\end{enumerate}
\end{enumerate}
\end{thm}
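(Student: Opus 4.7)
\emph{Setup.} For $\xi\in E_1$, Proposition \ref{PropSeparability} and Lemma \ref{lemonCP} realize $\Gamma_\xi$ as a decorated half-infinite necklace: the infinite cycle-path $\mathcal{CP}_\xi=\{P_i\}_{i\geq 1}$ together with a finite $k$-decoration attached at each non-path vertex of each $P_i$. Lemma \ref{lemmaonlyoneend} shows that any two infinite cycle-paths in $\Gamma_\xi$ eventually coincide, so up to a finite index shift this necklace is canonical. Combining Proposition \ref{PropAlg}, Corollary \ref{RemonSEC1end}, Lemma \ref{lemLP} and the geometric inverse-covering picture from the proof of Proposition \ref{PropInverseCovering}, the full necklace data---cycle lengths $|P_i|$ together with the heights and cyclic positions of the attached decorations---is completely determined by the canonical triple $(k,\{m_l\},\{t_l\},\{x^l_i\})$ of $\xi$. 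The plan is to classify these necklaces up to (i) the index shift $i\mapsto i+r$, and (ii) reversal of the traversal direction of each individual cycle.

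\emph{Part 1.} If $\xi,\eta\in Cof(1^\omega)$ then by Proposition \ref{cofinality} they lie in a common $B$-orbit, so $\Gamma_\xi=\Gamma_\eta$. For the second claim, the invariant I use is the asymptotic growth of $|P_i|$. By Lemma \ref{lemLP}, each $E$-step either preserves or doubles the last cycle length of the path, while each $EC$-pair preserves the path length but doubles its last cycle; thus a block $E^{t_l}(EC)^{m_l/2}$ adds $t_l$ to the path length and multiplies its last cycle by roughly $2^{\lceil t_l/2\rceil+m_l/2}$. For $\xi=u1^\omega$ the SEC-sequence ends in $E^\omega$ (Corollary \ref{RemonSEC1end}), giving $|P_i|\asymp 2^{i/2}$; for $\eta\in E_1\setminus Cof(1^\omega)$ the SEC-sequence contains infinitely many $EC$-pairs and $\sum_l m_l=\infty$, so $|P_j|/2^{j/2}\to\infty$. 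An unrooted isomorphism preserves the cycle-length sequence along the essentially unique infinite cycle-path (Lemma \ref{lemmaonlyoneend}), so these two growth rates cannot match and $\Gamma_\xi\not\simeq\Gamma_\eta$.

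\emph{Part 2.} For sufficiency, assuming (a), (b) and (c), the first two conditions together with the explicit determination of the necklace from the canonical triple give $|P^\xi_{r+n}|=|P^\eta_{s+n}|$ and equal multisets of decoration heights on corresponding cycles for every $n\geq 0$: the identity $k+T_r+M_{r-1}=k'+T'_s+M'_{s-1}$ matches the starting level and hence both decoration heights and parities. The cyclic positions of decorations on each cycle are encoded by the bits $x^{r+n}_p$, and the swap $x\leftrightarrow 1-x$ corresponds exactly to reflecting that cycle (a symmetry of an unrooted graph), so condition (c) produces cycle-by-cycle isomorphisms $P^\xi_{r+n}\to P^\eta_{s+n}$ that glue along shared cut vertices. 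The finite initial pieces $P^\xi_1,\dots,P^\xi_{r-1}$ and $P^\eta_1,\dots,P^\eta_{s-1}$, having identical combinatorial structure by (a)--(b), are matched separately. For necessity, an isomorphism $\varphi\colon\Gamma_\xi\to\Gamma_\eta$ sends cycles to cycles and cut vertices to cut vertices, so $\varphi(\mathcal{CP}_\xi)$ is an infinite cycle-path in $\Gamma_\eta$ and by Lemma \ref{lemmaonlyoneend} $\varphi(P^\xi_{r+n})=P^\eta_{s+n}$ for some $r,s\geq 1$ and all $n\geq 0$. Matching cycle lengths and decoration heights produces (a); matching the starting level yields (b); and the only freedom of $\varphi|_{P^\xi_{r+n}}$ fixing the two cut vertices that connect it to $P^\xi_{r+n\pm 1}$ is the orientation flip, yielding (c).

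\emph{Main obstacle.} The delicate work is the bookkeeping translating the symbolic triple $(k,\{m_l\},\{t_l\},\{x^l_i\})$ into the geometric necklace, in particular tracking the left/right orientation of each cycle so that the flip $y=1-x$ of (c) realizes precisely the unique non-trivial unrooted symmetry of an individual cycle and nothing more. Iterating the local inverse-covering recipe of Proposition \ref{PropInverseCovering} (Figures 5 and 6) is where most of the technical effort goes.
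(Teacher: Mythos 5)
Your proposal follows essentially the same route as the paper: the graph is reduced to the infinite cycle-path $\mathcal{CP}_\xi$ with its decorations, Lemma \ref{lemmaonlyoneend} makes this path essentially unique, Proposition \ref{PropA_i} controls the cycle lengths (your growth-rate invariant in Part 1 is just a repackaging of the paper's comparison of the gap sequences $a_{i+1}-a_i$), and the angle analysis of Proposition \ref{introductiondesangles} together with Lemma \ref{angoliexplementari} identifies the flip $x\mapsto 1-x$ with the reflection of an individual cycle, exactly as in the paper's proof.

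One justification is wrong as stated: the finite initial pieces $P^\xi_1,\dots,P^\xi_{r-1}$ and $P^\eta_1,\dots,P^\eta_{s-1}$ do \emph{not} have ``identical combinatorial structure by (a)--(b)''. Conditions (a)--(c) say nothing about the data $k,m_0,\dots,m_{r-1},t_1,\dots,t_r$ versus $k',m'_0,\dots,m'_{s-1},t'_1,\dots,t'_s$, and $r\neq s$ is allowed, so the two initial segments can be described by completely different symbolic data. What actually saves this step (and is how the paper argues) is that the entire finite subgraph hanging at the cut vertex $v_i\in P_i\cap P_{i+1}$ is a standard decoration in the sense of Proposition \ref{PropSeparability}, whose isomorphism type is determined by its height alone; condition (b) forces $a^\xi_i=a^\eta_j$ for the appropriate shift, so the two initial blobs are isomorphic regardless of how differently they are built up. With that correction your gluing argument coincides with the paper's nested construction of isomorphisms $\phi_n:X_{i+n}\to X'_{j+n}$ of increasing decorations.
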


In order to prove Theorem \ref{ThmBijection}, we need to study in more detail the geometry of the limit graphs $\Gamma_\xi$. It turns out that it is determined by the infinite cycle-path $\mathcal{CP}_\xi$ in $\Gamma_\xi$. In Propositions \ref{PropA_i} and \ref{introductiondesangles} below we describe these infinite cycle-paths completely by listing the lengths of the cycles and finding their relative position, in terms of the infinite ray $\xi\in E_1$ represented in the canonical form (\ref{CanFormOfXi}).

\begin{defi} \label{definizioneveraai}
Given $\xi\in E_1$, we associate with it the sequence of integers $a_i=a_i^\xi$, $i\geq 1$, defined as follows:
\begin{itemize}
\item if $\xi=1^\omega$, then $a_i:=i$ for all $i\geq 1$;
\item if $\xi\neq 1^\omega$, then Proposition \ref{PropAlg} provides a triple $(k,\{m_l\},\{t_l\})$  associated with $\xi$. Define for all $j\geq 1$, $0\leq s< t_j$,
    \begin{equation}
    a_{T_{j-1}+s+1}:=k+M_{j-1}+T_{j-1}+s.
    \end{equation}
\end{itemize}
\end{defi}

\begin{prop} \label{PropA_i}
Given $\xi\in E_1$, let $\mathcal{CP}_\xi=\{P_i\}_{i\geq 1}$ be the infinite cycle-path associated with $\xi$. Then, for all $i\geq 1$, $|P_i|=2^{\lceil\frac{a_i}{2}\rceil}$.
\end{prop}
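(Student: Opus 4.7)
My plan is to combine the SEC-sequence description from Corollary \ref{RemonSEC1end} with Lemma \ref{lemLP}, using as a fulcrum the fact that the last cycle $P_t^n$ of $\mathcal{CP}_{\xi_n}$ is by definition the central cycle of $\Gamma_n$, whose length is $2^{\lceil n/2\rceil}$ (a direct consequence of the substitutional rules, as observed in the proof of Lemma \ref{lemmazeroomega}). Thus $k_t^n=2^{\lceil n/2\rceil}$ at every step, and this is the base quantity from which all other cycle lengths in $\mathcal{CP}_{\xi_n}$ will be read off.

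For each $i\geq 1$, I will then identify the step $n_i$ after which the length at position $i$ is frozen. By Lemma \ref{lemLP}, expansions leave $k_1^n,\dots,k_t^n$ unchanged; the only transitions that can modify an already-existing length at position $i$ are the contractions that take $|\mathcal{CP}_{\xi_n}|$ from $i+1$ down to $i$. In the SEC-sequence $S^{k+m_0}E^{t_1}(EC)^{m_1/2}E^{t_2}(EC)^{m_2/2}\cdots$ supplied by Corollary \ref{RemonSEC1end}, contractions appear only inside the $(EC)^{m_j/2}$-blocks, during which $|\mathcal{CP}_{\xi_n}|$ oscillates between $T_j+1$ and $T_j+2$; consequently, the only positions ever touched by a contraction are those of the form $i=T_j+1$.

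This partitions the positions into two disjoint types. Type A consists of $i=T_j+1$ for some $j\geq 0$ (with the convention $T_0=0$, so $i=1$ falls here): the cycle at position $i$ is the current last cycle at the end of block $j$, i.e.\ at step $n=k+M_j+T_j$ (or at the end of the stability phase when $j=0$), and so has final length $2^{\lceil(k+M_j+T_j)/2\rceil}$. Type B consists of $i=T_{j-1}+s+1$ for some $j\geq 1$ and $1\leq s<t_j$: the cycle is born as the new last cycle at the $s$-th expansion inside $E^{t_j}$, at step $n=k+M_{j-1}+T_{j-1}+s$, and, not being of the form $T_{j'}+1$ for any $j'$, is never touched by a subsequent contraction. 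In both cases Definition \ref{definizioneveraai} identifies $n$ as $a_i$, so $|P_i|=2^{\lceil a_i/2\rceil}$.

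The cases $\xi=1^\omega$ and $\xi=w1^\omega$ fit the same scheme (with all eventually-created positions being of Type B), since the SEC-sequence terminates with an infinite run of expansions. The main obstacle I anticipate is justifying cleanly that contractions touch only positions of the form $T_j+1$; this uses crucially the block structure $E^{t_j}(EC)^{m_j/2}$ with $t_j\geq 1$, which is guaranteed by the canonical form of Proposition \ref{PropAlg}. The parity bookkeeping inside Lemma \ref{lemLP} is routine: it shows directly that $2^{\lceil n/2\rceil}$ becomes $2^{\lceil(n+1)/2\rceil}$ after either staying equal (when $n$ is odd) or doubling (when $n$ is even), which is what validates the invariant $k_t^n=2^{\lceil n/2\rceil}$ through every expansion and contraction.
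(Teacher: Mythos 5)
Your argument is correct, and it is a genuine reorganization of the paper's proof rather than a copy of it. The paper proceeds by induction on $i$: it identifies $P_i$ with the central cycle of $\Gamma_{a_i}$ and then computes $|P_{i+1}|$ \emph{relatively to} $|P_i|$, splitting into the cases $a_{i+1}=a_i+1$ and $a_{i+1}=a_i+1+m_j$ and, in the latter, counting doublings through the block $E(EC)^{m_j/2}$ with a parity case distinction on $a_i$. You instead compute each length \emph{absolutely}: the invariant $k_t^n=2^{\lceil n/2\rceil}$ (the terminal cycle of $\mathcal{CP}_{\xi_n}$ is always the central cycle of $\Gamma_n$), combined with the fact from Lemma \ref{lemLP} that a slot of the cycle-path is rewritten only when it is, or again becomes, the terminal slot, reduces the proposition to checking that the last $n$ with $|\mathcal{CP}_{\xi_n}|=i$ equals $a_i$ --- which your Type A / Type B reading of the SEC-blocks verifies correctly against Definition \ref{definizioneveraai}. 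This buys you a proof with no induction on $i$ and no relative length computation, at the price of the ``freezing-time'' bookkeeping. Two small points should be made explicit to finish: first, during a block $(EC)^{m_j/2}$ the slot $T_j+2$ is repeatedly created and destroyed, so for $s=1$ your phrase ``born at the $s$-th expansion of $E^{t_{j'}}$'' must be read as ``created for the \emph{last} time there''; second, passing from ``the length in slot $i$ of $\mathcal{CP}_{\xi_n}$ is eventually constant equal to $2^{\lceil a_i/2\rceil}$'' to ``$|P_i|=2^{\lceil a_i/2\rceil}$ for the limit cycle $P_i\in\mathcal{CP}_\xi$'' requires Lemma \ref{lemonCP} (or the definition of $\mathcal{CP}_\xi$ as a limit). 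Both are routine and do not affect the validity of the approach.
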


\begin{proof}
By induction on $i$: if $i=1$, then by definition, $a_1=k+m_0$, that is the index of stability of $\xi$ (see (\ref{Eqindexofstab}) and Corollary \ref{RemonSEC1end}). In other words, the prefix $\xi_{k+m_0}$ of $\xi$ is situated on the central cycle of $\Gamma_{k+m_0}$. Thus, $\mathcal{CP}_{\xi_{k+m_0}}$ is constituted of a single cycle of length $2^{\lceil\frac{k+m_0}{2}\rceil}=2^{\lceil\frac{a_1}{2}\rceil}$. Moreover, since $\xi_{a_1+1}=\xi_{a_1}1$, then this cycle is the first one of the cycle-path $\mathcal{CP}_\xi$ (see Remark \ref{RemonSEC}).\\
\indent Suppose that the statement is true for $i$; the cycle $P_i$ of $\mathcal{CP}_\xi$ is then isomorphic to the central cycle of $\Gamma_{a_i}$. Consider the prefix $\xi_{a_i}$ of $\xi$ as well as the cycle-path $\mathcal{CP}_{\xi_{a_i}}$ in $\Gamma_{a_i}$.
Let $j\geq 1$, $0\leq s<t_j$ be such that $T_{j-1}+s+1=i$ so that $a_i=a_{T_{j-1}+s+1}$. We distinguish two cases:

\begin{itemize}
\item if $s\leq t_j-2$, then $a_{i+1}=\ (a_{T_{j-1}+s+2}=)\ k+M_{j-1}+T_{j-1}+s+1=a_i+1$. On one hand, $\xi_{a_{i+1}}=\xi_{a_i}1$, so that the central cycle of $\Gamma_{a_i}$ is isomorphic to the penultimate cycle of the cycle-path $\mathcal{CP}_{\xi_{a_i+1}}$ in $\Gamma_{a_i+1}$. On the other hand, $\xi_{a_i+2}=\xi_{a_i+1}1$ which implies that the central cycle of $\Gamma_{a_i+1}$ is isomorphic to the penultimate cycle of the cycle-path $\mathcal{CP}_{\xi_{a_i+2}}$ (see Remark \ref{RemonSEC}).  Hence, by induction hypothesis, the penultimate cycle of $\mathcal{CP}_{\xi_{a_i+2}}$ is the $(i+1)$-st cycle of $\mathcal{CP}_{\xi_{a_i+2}}$ which is isomorphic (by Lemma \ref{lemonCP}) to the $(i+1)$-st cycle of $\mathcal{CP}_\xi$. Its length is
    $|P_{i+1}|=2^{\lceil\frac{a_i+1}{2}\rceil}=2^{\lceil\frac{a_{i+1}}{2}\rceil}$.
\item if $s=t_j-1$, then $a_{i+1}=\ (a_{T_{j-1}+s+2}=)\ k+M_{j-1}+T_j+m_j=a_{i}+1+m_j$. Moreover, $\xi_{a_{i+1}+1}=\xi_{a_{i+1}}1$. Thus, as in the previous case, the last cycle of $\mathcal{CP}_{\xi_{a_{i+1}}}$ is isomorphic to the $(i+1)$-st cycle of $\mathcal{CP}_\xi$. By induction hypothesis, its length is
\begin{displaymath}
    |P_{i+1}|=\left\{\begin{array}{cc}
    |P_i|\cdot2^{\frac{m_j}{2}+1}=2^{\frac{a_i}{2}+\frac{m_j}{2}+1}=2^\frac{a_{i+1}+1}{2}=2^{\lceil\frac{a_{i+1}}{2}\rceil} & \textrm{if $a_i$ is even,}\\
    |P_i|\cdot2^{\frac{m_j}{2}}=2^{\frac{a_i+1}{2}+\frac{m_j}{2}}=2^{\frac{a_{i+1}}{2}}=2^{\lceil\frac{a_{i+1}}{2}\rceil} & \textrm{if $a_i$ is odd.}
    \end{array}\right.
    \end{displaymath}
\end{itemize}
\end{proof}

Given $\xi\in E_1$, for any $n\geq 1$, consider the cycle path $\mathcal{CP}_{\xi_n}=\{P_i^n\}_{i=1}^t$ in $\Gamma_n$. Set $v_0^n:= \xi_n$ and $\{v_i^n\}:=P_i^n\cap P_{i+1}^n$ for every $i=1,\ldots, t-1$. 
By Lemma \ref{lemonCP}, the limit $v_i:=\lim_{n\to\infty}v^n_i$ exists for each $i\geq 1$; $\{v_i\}=P_i\cap P_{i+1}$ where $P_i$, $P_{i+1}\in\mathcal{CP}_\xi$. 
Since we consider the graph $\Gamma_n$ embedded in the plane in such a way that every  $P_i^n$ is a regular polygon, we may introduce the notion of an angle between two consecutive polygons, as follows.

\begin{defi} \label{defangles}
Let $\xi\in E_1$. For any $n\geq 1$, consider
$\mathcal{CP}_{\xi_n}=\{P_i^n\}_{i=1}^t$. For $1\leq i\leq t-1$,
$\alpha_i^n$ is the (counterclockwise) angle between vectors
$\overrightarrow{c_i^nv_{i-1}^n}$ and
$\overrightarrow{c^n_iv_i^n}$ (see Fig. 11). The limit
$\alpha_i:=\lim_{n\to\infty}\alpha_i^n$ is well defined for each
$i\geq 1$; we
call $\alpha_i$ the angle between cycles $P_{i-1}$ and $P_i$ (in
$\mathcal{CP}_\xi$).
\end{defi}

\begin{center}
\psfrag{Pni}{$P^n_i$}\psfrag{Pni-1}{$P^n_{i-1}$}\psfrag{Pni+1}{$P^n_{i+1}$}\psfrag{Fig.9}{\textbf{Fig. 11.}}
\psfrag{cni}{$c^n_i$}\psfrag{vni-1}{$v^n_{i-1}$}\psfrag{vni}{$v^n_i$}\psfrag{alphani}{$\alpha_i^n$}
\includegraphics[width=0.5\textwidth]{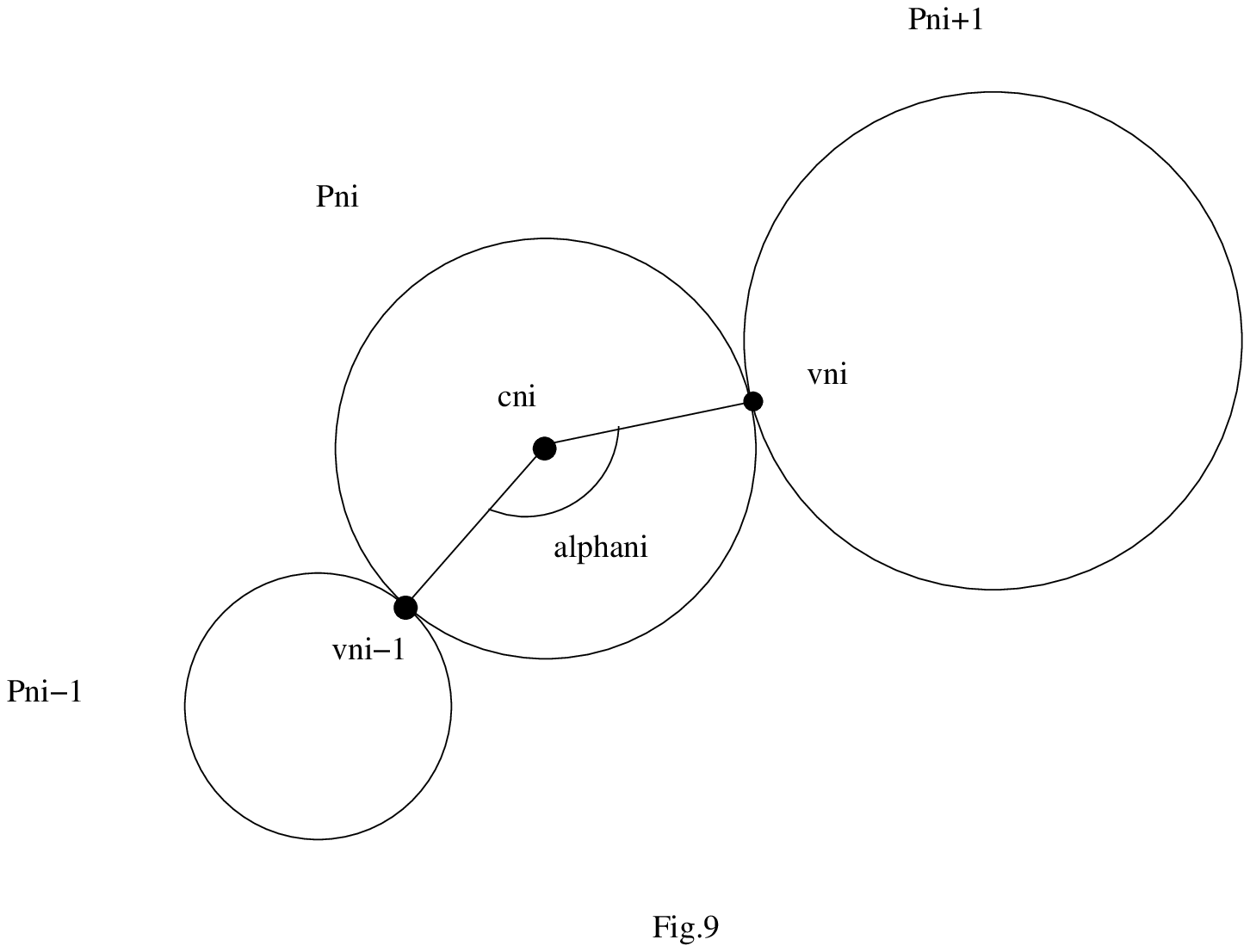}
\end{center}

The next proposition shows that, given $\xi\in E_1$, the angle $\alpha_i$ can be expressed, for each $i\geq 1$, in terms of the canonical representation (\ref{CanFormOfXi}) of $\xi$.

\begin{prop}\label{introductiondesangles}
Let $\xi\in E_1$ and let $(k,\{m_l\},\{t_l\})$  be the triple provided by Proposition \ref{PropAlg}. Let $\{a_i\}_{i\geq 1}$ be the sequence associated with $\xi$ as in Definition \ref{definizioneveraai}. Then,

\begin{itemize}
\item if $m_0=0$, then $\alpha_1=\pi$, whereas if $m_0>0$, then $\alpha_1=
\pi/2^{m_0/2}+\sum_{h=1}^{m_0/2}x_h\pi/2^{m_0/2-h}$;
\item if $i>1$ and $a_i-a_{i-1}=1$, then $\alpha_i= \pi$;
\item if $i>1$ and $a_i-a_{i-1}>1$, then there exists $d\geq 1$ such that $a_i-a_{i-1}=m_d+1$. In this case $\alpha_i =
\pi/2^{m_d/2}+\sum_{h=1}^{m_d/2}x_h^d\pi/2^{m_d/2-h}$.
\end{itemize}
\end{prop}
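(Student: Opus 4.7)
The proof is by induction on $i\geq 1$, using the explicit recursive construction of $\Gamma_{n+1}$ from $\Gamma_n$ via $\pi^{-1}_{n+1}$ (Fig.~5 and Fig.~6 in the proof of Proposition~\ref{PropInverseCovering}), together with the canonical representation (\ref{CanFormOfXi}) of $\xi$ and the SEC-sequence given by Corollary~\ref{RemonSEC1end}. The plan is to identify, for each $i$, the cycle $P_i$ with the central cycle of $\Gamma_{a_i}$, then locate the attachment points $v_{i-1}$ and $v_i$ (with $v_0:=\xi$) on $P_i$ and read off the counterclockwise angle between them at the center $c_i$. That $P_i$ can be identified with the central cycle of $\Gamma_{a_i}$ was already established inside the proof of Proposition~\ref{PropA_i} via the stabilization from Lemma~\ref{lemonCP}.

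For the base case $i=1$, I analyze the positions of $\xi_{a_1}$ and of $v_1$ on $P_1$. When $m_0=0$, the block structure of $\xi$ forces $\xi_{k+1}=0^{k-1}11$ to sit inside $\mathcal{D}(0^{k+1})^c$ at the point diametrically opposite to $\xi_{a_1}=0^{k-1}1$ on the new central cycle of $\Gamma_{k+1}$; the subsequent expansion places $v_1$ precisely at this antipode, so $\alpha_1=\pi$. When $m_0>0$, I proceed by secondary induction on $m_0/2$: each pair of letters $0x_h^0$ corresponds to one $(EC)$-block in the SEC-sequence and, by Lemma~\ref{lemLP}, doubles the length of the current cycle while the image of $\xi_n$ either stays at the same angular position (if $x_h^0=0$) or moves by half the new circumference (if $x_h^0=1$). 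Accumulating these contributions yields the displacement $\sum_{h=1}^{m_0/2}x_h^0\pi/2^{m_0/2-h}$, while the extra term $\pi/2^{m_0/2}$ comes from the final expansion (going from level $a_1$ to $a_1+1$ and then to $a_2$), which places $v_1$ at a half-cycle shift from the accumulated position of $\xi$.

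For $i>1$, only the location of $v_{i-1}=P_{i-1}\cap P_i$ on $P_i$ needs to be determined, the location of $v_i$ following by the same mechanism used in the base case. If $a_i-a_{i-1}=1$ (two consecutive E's, corresponding to $t_j\geq 2$), then one level up $P_i$ is created from $P_{i-1}$ by a single expansion, and the same argument as in the $m_0=0$ subcase shows that $v_{i-1}$ and $v_i$ end up diametrically opposite on $P_i$, hence $\alpha_i=\pi$. If $a_i-a_{i-1}=m_d+1>1$, the initial E places $v_{i-1}$ at the antipode of the foot of $\xi_{a_{i-1}+1}$ on $P_i$, and the subsequent block $(EC)^{m_d/2}$ translates $v_i$ around $P_i$ exactly as the $x_h^0$'s did in the base case, yielding the claimed formula with $m_d$ and $x_h^d$ in place of $m_0$ and $x_h^0$.

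The main technical obstacle is the careful bookkeeping of the embedded geometry under $\pi^{-1}_{n+1}$: one has to track which copy of $\mathcal{D}(0^n)$ is rotated by $+90^{\circ}$ and which by $-90^{\circ}$ (and similarly for the two copies of $\mathcal{D}(0^n)^c$ attached after a $+180^{\circ}$ rotation), so as to verify that appending $0$ at the end of $\xi_n$ preserves the angular position of its image on the next central cycle while appending $1$ after a $0$ inside an $(EC)$-block shifts it by precisely $\pi/2^{h}$, with the sign consistent with the counterclockwise orientation of $P_i$. All other ingredients---the sizes $|P_i|=2^{\lceil a_i/2\rceil}$ from Proposition~\ref{PropA_i}, the lengths of individual cycles from Lemma~\ref{lemLP}, the shape of the SEC-sequence from Corollary~\ref{RemonSEC1end}, and the existence of the limit angles $\alpha_i$ from Definition~\ref{defangles}---are already available from the preceding results.
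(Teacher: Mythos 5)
Your proposal is correct and follows essentially the same route as the paper: identify $P_i$ with the central cycle of $\Gamma_{a_i}$, use the SEC-sequence and the explicit description of $\pi_{n+1}^{-1}$, and track how each letter $x_h^d$ of the block $(EC)^{m_d/2}$ shifts the attachment point $v_{i-1}$ by a dyadic multiple of $\pi$; the paper organizes this as a backward induction on $m_d$ via the recursion $\alpha_i=\alpha'/2\ (+\pi)$ depending on $x_{m_d/2}^d$, whereas you accumulate the contributions forwards, which is the same computation. One small caution on your bookkeeping: when the central cycle doubles under $\pi_{n+1}^{-1}$, the angular position of a lift is \emph{halved} (and then optionally shifted by $\pi$ according to $x_h^d$), not preserved as your phrasing suggests --- with that correction your forward accumulation telescopes to exactly the stated formula.
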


\vspace{0.5cm}
\unitlength=0,3mm

\begin{center}
\begin{picture}(400,210)
\letvertex I=(160,110)\letvertex N=(170,140)
\letvertex O=(200,150)\letvertex R=(230,140)\letvertex S=(240,110)\letvertex T=(230,80)
\letvertex U=(200,70)\letvertex V=(170,80)\letvertex P=(200,180)\letvertex Q=(200,210)
\letvertex Z=(200,40)\letvertex J=(200,10)\letvertex K=(260,130)\letvertex X=(280,110)
\letvertex W=(260,90)\letvertex g=(260,160)\letvertex h=(260,60)\letvertex c=(300,130)
\letvertex Y=(300,90)\letvertex d=(320,110)\letvertex e=(360,110)\letvertex f=(400,110)
\put(396,98){$1^{\omega}$}

\letvertex II=(60,160)\letvertex NN=(60,60)

\letvertex A=(120,220)\letvertex B=(120,200)
\letvertex C=(120,180)\letvertex D=(105,165)\letvertex E=(135,165)\letvertex F=(120,150)
\letvertex G=(90,140)\letvertex H=(77,153)\letvertex i=(80,110)\letvertex l=(90,80)
\letvertex m=(77,67)\letvertex n=(105,55)\letvertex o=(120,70)\letvertex p=(120,40)
\letvertex q=(120,20)\letvertex r=(120,0)\letvertex s=(135,55)\letvertex t=(150,80)
\letvertex u=(163,67)\letvertex v=(150,140)\letvertex z=(163,153)

\put(140,-15){\textbf{Fig. 12.} A finite part of $\Gamma_{1^{\omega}}$.}

\drawvertex(I){$\bullet$}\drawvertex(N){$\bullet$}\drawvertex(O){$\bullet$}\drawvertex(P){$\bullet$}
\drawvertex(J){$\bullet$}\drawvertex(K){$\bullet$}\drawvertex(Q){$\bullet$}\drawvertex(R){$\bullet$}
\drawvertex(S){$\bullet$}\drawvertex(T){$\bullet$}\drawvertex(U){$\bullet$}\drawvertex(V){$\bullet$}
\drawvertex(W){$\bullet$}\drawvertex(X){$\bullet$}\drawvertex(Y){$\bullet$}\drawvertex(Z){$\bullet$}
\drawvertex(g){$\bullet$}\drawvertex(h){$\bullet$}\drawvertex(c){$\bullet$}\drawvertex(f){$\bullet$}
\drawvertex(d){$\bullet$}\drawvertex(e){$\bullet$}\drawvertex(A){$\bullet$}\drawvertex(n){$\bullet$}
\drawvertex(B){$\bullet$}\drawvertex(o){$\bullet$}\drawvertex(C){$\bullet$}\drawvertex(p){$\bullet$}
\drawvertex(D){$\bullet$}\drawvertex(q){$\bullet$}\drawvertex(E){$\bullet$}\drawvertex(r){$\bullet$}
\drawvertex(F){$\bullet$}\drawvertex(s){$\bullet$}\drawvertex(G){$\bullet$}\drawvertex(t){$\bullet$}
\drawvertex(H){$\bullet$}\drawvertex(u){$\bullet$}\drawvertex(i){$\bullet$}\drawvertex(v){$\bullet$}
\drawvertex(l){$\bullet$}\drawvertex(z){$\bullet$}\drawvertex(m){$\bullet$}

\drawundirectededge(II,i){} \drawundirectededge(NN,i){}

\drawundirectededge(I,N){} \drawundirectededge(N,O){}
\drawundirectededge(O,R){} \drawundirectededge(R,S){}
\drawundirectededge(S,T){} \drawundirectededge(T,U){}
\drawundirectededge(U,V){} \drawundirectededge(V,I){}

\drawundirectedcurvededge(O,P){}\drawundirectedcurvededge(P,O){}
\drawundirectedcurvededge(Q,P){}\drawundirectedcurvededge(P,Q){}

\drawundirectedcurvededge(U,Z){}\drawundirectedcurvededge(Z,U){}
\drawundirectedcurvededge(Z,J){}\drawundirectedcurvededge(J,Z){}

\drawundirectededge(S,K){} \drawundirectededge(K,X){}
\drawundirectededge(X,W){} \drawundirectededge(W,S){}
\drawundirectededge(X,c){} \drawundirectededge(c,d){}
\drawundirectededge(d,Y){} \drawundirectededge(Y,X){}

\drawundirectedcurvededge(d,e){}\drawundirectedcurvededge(e,d){}
\drawundirectedcurvededge(e,f){}\drawundirectedcurvededge(f,e){}
\drawundirectedcurvededge(K,g){}\drawundirectedcurvededge(g,K){}
\drawundirectedcurvededge(W,h){}\drawundirectedcurvededge(h,W){}

\drawundirectedloop(Q){}\drawundirectedloop[b](J){}
\drawundirectedloop(g){}\drawundirectedloop[b](h){}\drawundirectedloop(c){}\drawundirectedloop[b](Y){}
\drawundirectedloop[r](f){}\drawundirectedloop[b](V){}\drawundirectedloop(N){}\drawundirectedloop[b](T){}
\drawundirectedloop(R){}

\drawundirectededge(C,D){}\drawundirectededge(D,F){}\drawundirectededge(F,E){}\drawundirectededge(E,C){}\drawundirectededge(F,G){}
\drawundirectededge(G,i){}\drawundirectededge(i,l){}\drawundirectededge(l,o){}\drawundirectededge(o,t){}
\drawundirectededge(t,I){}\drawundirectededge(I,v){}\drawundirectededge(v,F){}\drawundirectededge(o,n){}
\drawundirectededge(n,p){}\drawundirectededge(p,s){}\drawundirectededge(s,o){}

\drawundirectedcurvededge(A,B){}\drawundirectedcurvededge(B,A){}\drawundirectedcurvededge(B,C){}\drawundirectedcurvededge(C,B){}
\drawundirectedcurvededge(G,H){}\drawundirectedcurvededge(H,G){}\drawundirectedcurvededge(l,m){}\drawundirectedcurvededge(m,l){}
\drawundirectedcurvededge(t,u){}\drawundirectedcurvededge(u,t){}\drawundirectedcurvededge(v,z){}\drawundirectedcurvededge(z,v){}
\drawundirectedcurvededge(p,q){}\drawundirectedcurvededge(q,p){}\drawundirectedcurvededge(q,r){}\drawundirectedcurvededge(r,q){}

\drawundirectedloop(A){}\drawundirectedloop[l](D){}\drawundirectedloop(H){}
\drawundirectedloop[r](E){}\drawundirectedloop[l](n){}\drawundirectedloop[r](s){}\drawundirectedloop[b](r){}\drawundirectedloop[b](m){}\drawundirectedloop(z){}\drawundirectedloop[b](u){}
\end{picture}
\end{center}
\vspace{1cm}

\begin{proof}
Suppose that $\xi\in E_1$ is such that $\xi\neq w1^\omega$ for any $w\in\{0,1\}^\ast$ (the other case is treated similarly). By Corollary \ref{RemonSEC1end}, the SEC-sequence associated with $\xi$ is
$S^{k+m_0}E^{t_1}(EC)^{\frac{m_1}{2}}E^{t_2}(EC)^{\frac{m_2}{2}}\ldots E^{t_d}(EC)^{\frac{m_d}{2}}..$.
If $a_i-a_{i-1}=1$, then there exists $n\geq 0$ such that the $a_i$-th and $(a_i+1)$-st letter in (\ref{SEC}) belong to the block $E^{t_n}$ and $\xi_{a_i+1}=\xi_{a_i}1=\xi_{a_i-1}11$. Hence, $\alpha_i=\pi$.\\
\indent If $a_i-a_{i-1}>1$, then there exists $d\geq 1$
such that $a_i-a_{i-1}=m_d+1$. More precisely, we have $\xi_{a_i}=\xi_{a_{i-1}}10x^d_10x^d_2\ldots 0x^d_{\frac{m_d}{2}}$, where $x^d_j\in \{0,1\}$. Corollary \ref{RemonSEC1end} tells us that every letter $x_j^d$ corresponds to a contraction. The value of $x_j^d$ determines whether, after the contraction, $\xi_{a_{i-1}+1+2j}$ is in the upper or in the lower part of the diagram in Fig. 6.\\
\indent Fix $i\geq 2$. We prove by induction on $m_d$, that
$$
\alpha_i =
\frac{\pi}{2^{\frac{m_d}{2}}}+\sum_{h=1}^{\frac{m_d}{2}}\frac{\pi}{2^{\frac{m_d}{2}-h}}x_h^d.
$$
If $m_d=2$, then (\ref{CanFormOfXi}) becomes
\begin{displaymath}
\xi=0^{k-1}1(0x^0_1\ldots 0x^0_{\frac{m_0}{2}})1^{t_1}(0x^1_1\ldots 0x^1_{\frac{m_1}{2}})1^{t_2}(0x^2_1\ldots 0x^2_{\frac{m_2}{2}})\ldots 1^{t_d}(0x^d_1)1^{t_{d+1}}\ldots
\end{displaymath}
and (\ref{SEC}) becomes
\begin{displaymath}
S^{k+m_0}E^{t_1}(EC)^{\frac{m_1}{2}}E^{t_2}(EC)^{\frac{m_2}{2}}\ldots E^{t_d}(EC)E^{t_{d+1}}\ldots.
\end{displaymath}
We look at how $P_{i-1}$ is attached to $P_i$ in $\mathcal{CP}_\xi$. Consider the finite graph $\Gamma_{a_{i-1}+1}$; since
\begin{displaymath}
\xi_{a_{i-1}+1}=0^{k-1}1(0x^0_1\ldots 0x^0_{m_0/2})1^{t_1}(0x^1_1\ldots 0x^1_{m_1/2})1^{t_2}(0x^2_1\ldots 0x^2_{m_2/2})\ldots 1^{t_d}
\end{displaymath}
$\xi_{a_{i-1}+1}$ is situated in the right part of $\Gamma_{a_{i-1}+1}$:

\begin{center}
\psfrag{Pi}{$P_i$}\psfrag{Fig.10}{\textbf{Fig. 13.}}
\includegraphics[width=0.2\textwidth]{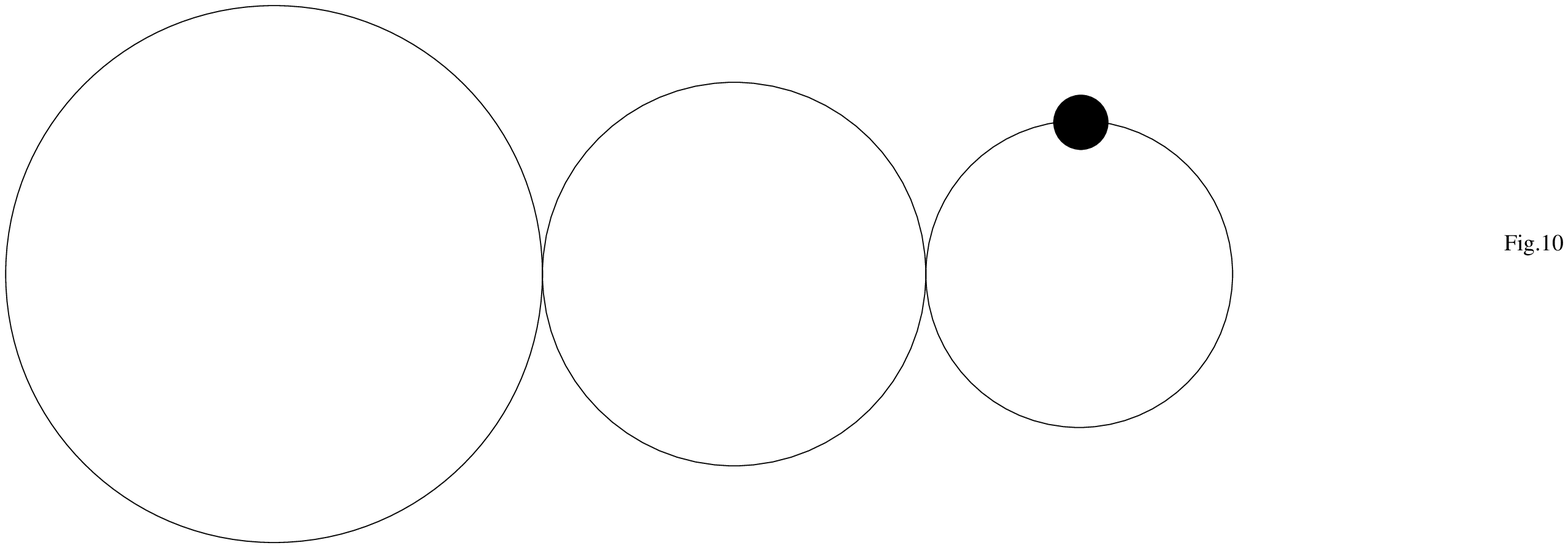}
\end{center}

\noindent Then, since $\xi_{a_{i-1}+2}=\xi_{a_{i-1}+1}0$, we get

\begin{center}
\psfrag{Fig.11}{\textbf{Fig. 14.}}
\includegraphics[width=0.3\textwidth]{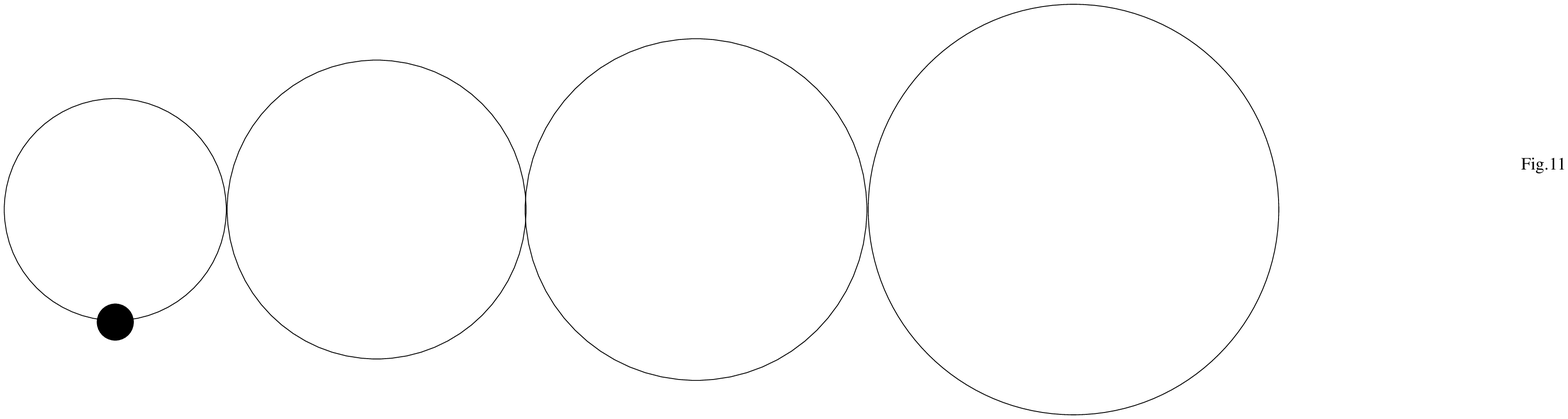}
\end{center}

\noindent Suppose now that $x^d_1=0$. (If $x_1^d=1$ we get a symmetric picture with the decoration growing downwards.) We have the following picture for $\mathcal CP_{\xi_{a_{i-1}+3}}\subset \Gamma_{a_{i-1}+3}$:

\begin{center}
\psfrag{Fig.12}{\textbf{Fig. 15.}}
\includegraphics[width=0.1\textwidth]{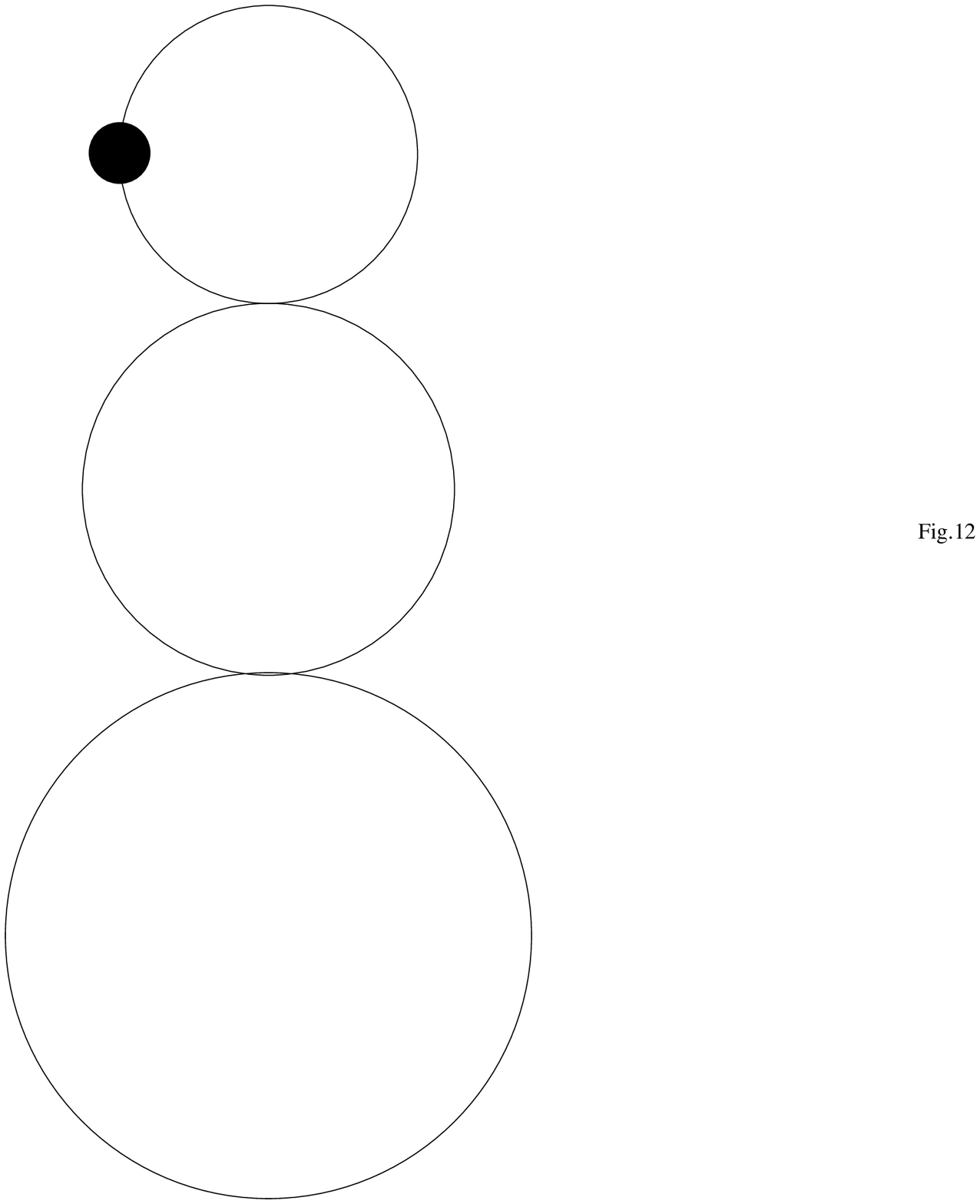}
\end{center}

\noindent Finally, adding $1$ (corresponding to the first letter of the block $E^{t_{d+1}}$) to the right of $\xi_{a_{i-1}+3}$, we get

\begin{center}
\psfrag{alphai}{$\alpha_i$}\psfrag{ci}{$c_i$}\psfrag{Pi}{$P_i$}\psfrag{vi-1}{$v_{i-1}$}\psfrag{Fig.13}{\textbf{Fig. 16.}}
\includegraphics[width=0.2\textwidth]{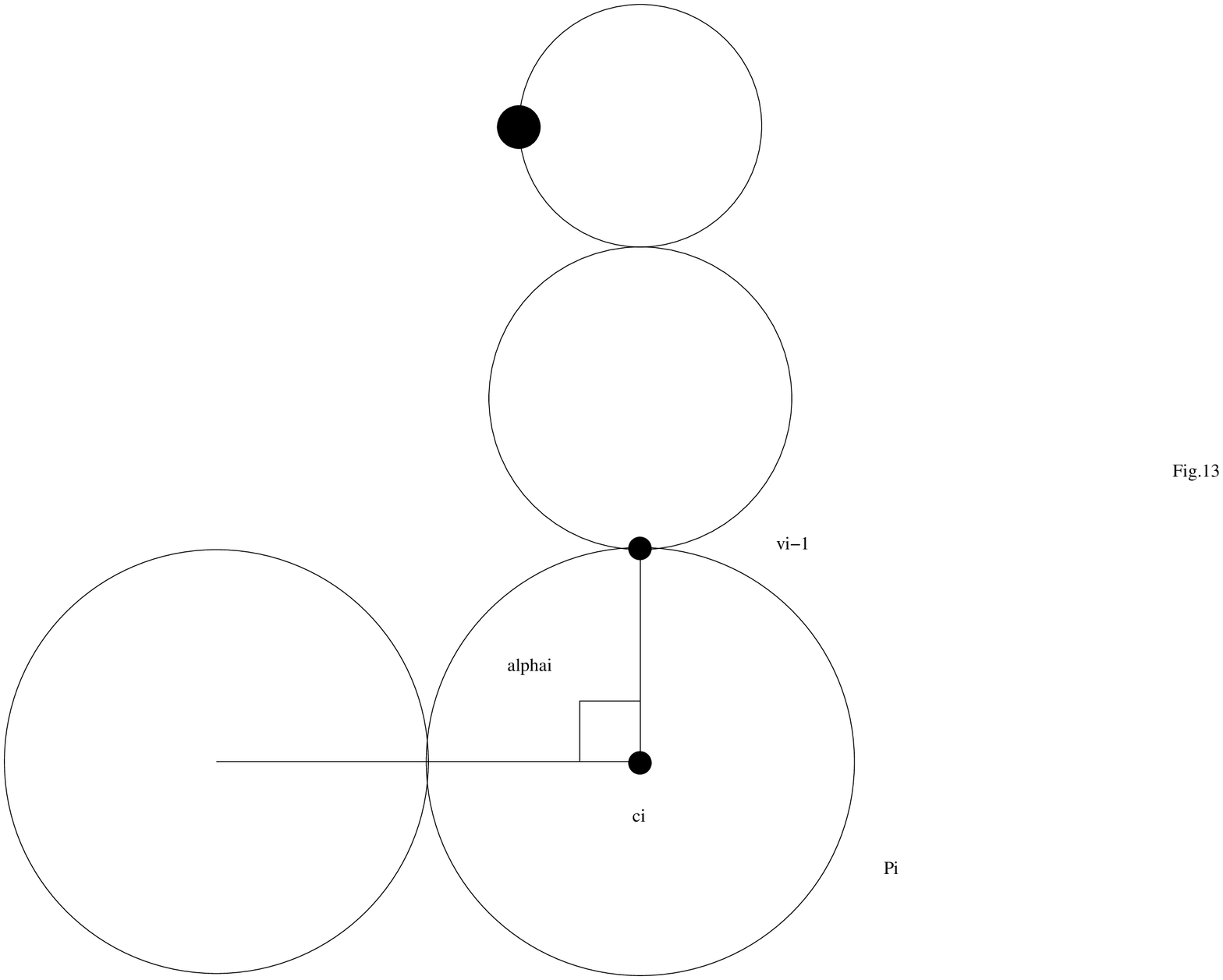}
\end{center}

\noindent It follows that $\alpha_i=\frac{\pi}{2}=\frac{\pi}{2^1}+\sum_{h=1}^1\frac{\pi}{2^{m_d-h}}\cdot 0$.\\
\indent Suppose now that the assertion is true for $m_d-2$; consider the graph $\Gamma_{a_i-2}$ and
\begin{displaymath}
\xi_{a_i}=0^{k-1}1(0x^0_1\ldots 0x^0_{m_0/2})1^{t_1}(0x^1_1\ldots 0x^1_{m_1/2})1^{t_2}(0x^2_1\ldots 0x^2_{m_d/2})\ldots 1^{t_d}(0x^d_1\ldots 0x^d_{(m_d-2)/2}0x^d_{m_2/2}).
\end{displaymath}
For $\mathcal CP_{\xi_{a_i-2}}\subset\Gamma_{a_i-2}$, define the angle $\alpha'$ as shown on Fig. 17. Then $\alpha'=\bar\alpha_i$ where $\bar\alpha_i$ is the $i$-th angle in the infinite cycle-path (as in Definition \ref{defangles}) for any $\bar\xi\in E_1$ with $\bar\xi_{a_i-1}=\xi_{a_i-2}1$ (i.e., $\bar a_i-\bar a_{i-1}=(m_d-2)+1=m_d-1$.) Therefore we can apply the induction hypothesis to $\alpha'$, and we have $\alpha'=\pi/2^{m_d/2-1}+\sum_{h=1}^{m_d/2-1}x^d_h\pi/2^{m_d/2-h-1}$.\\
\indent  The following picture explains how $\alpha_i$ is computed from $\alpha'$. The two cases correspond to $x_{m_d/2}^d$ being $0$ or $1$.

\begin{center}
\includegraphics[width=0.9\textwidth]{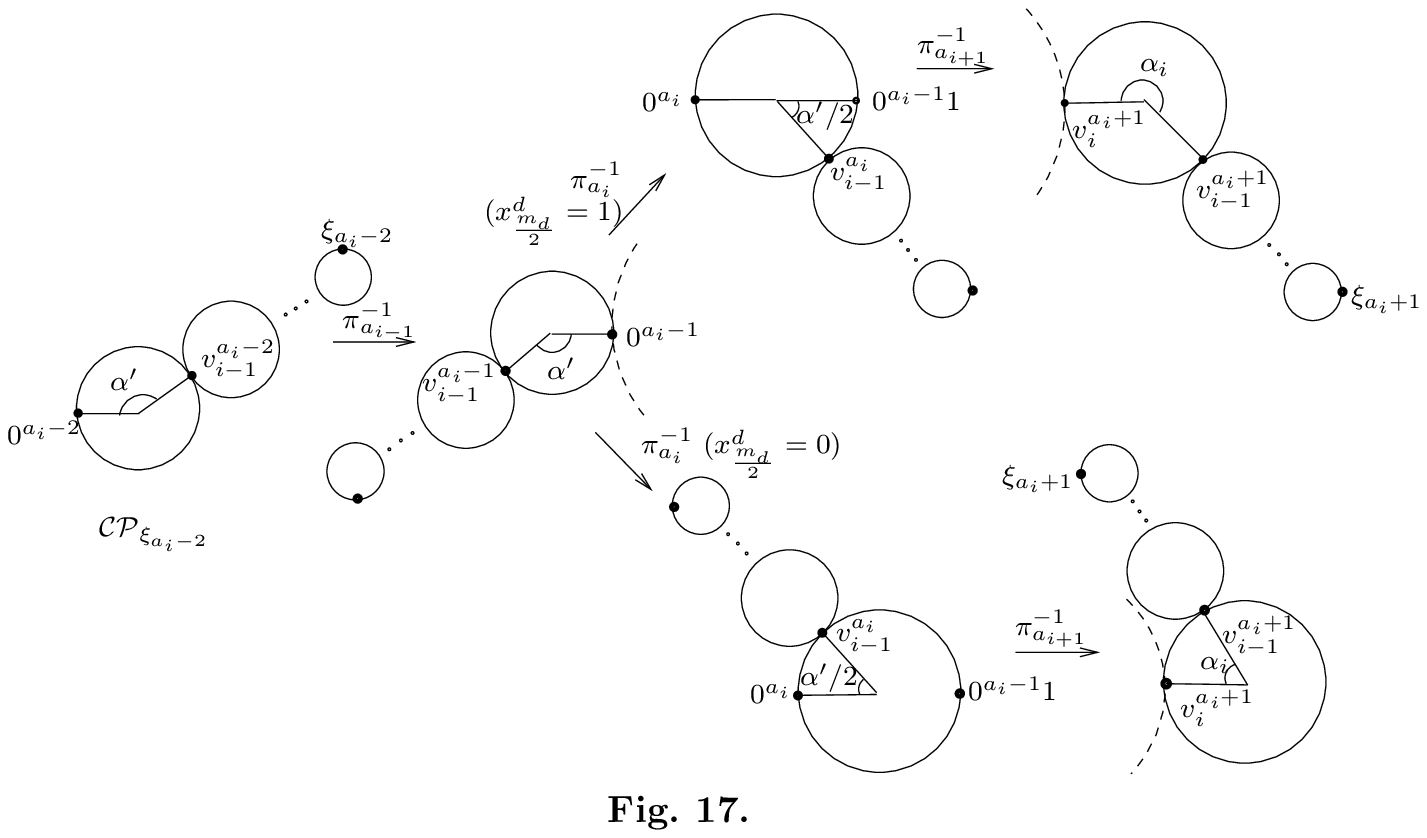}
\end{center}


\noindent We compute, using the induction hypothesis,

\begin{align*}
\alpha_i=\frac{\alpha'}{2} ( + \pi )&=
\frac{\pi}{2^\frac{m_d}{2}}+\sum_{h=1}^{\frac{m_d}{2}-1}\frac{\pi}{2^{\frac{m_d}{2}-h}}x^d_h ( + \pi )\\
&=\frac{\pi}{2^\frac{m_d}{2}}+\sum_{h=1}^\frac{m_d}{2}\frac{\pi}{2^{\frac{m_d}{2}-h}}x^d_h.
\end{align*}

The case $i=1$ can be treated similarly.
\end{proof}

The following statement is easily checked by a direct computation:

\begin{lem}\label{angoliexplementari}
Let
$\alpha_i=\frac{\pi}{2^{\frac{m_d}{2}}}+\sum_{h=1}^{\frac{m_d}{2}}\frac{\pi}{2^{\frac{m_d}{2}-h}}x^d_h$
and
$\beta_i=\frac{\pi}{2^{\frac{m_d}{2}}}+\sum_{h=1}^{\frac{m_d}{2}}\frac{\pi}{2^{\frac{m_d}{2}-h}}y^d_h$ where $m_d$ is even positive and $x_h^d,y_h^d\in\{0,1\}$.
Then $\alpha_i=\beta_i$ if and only if $x^d_h=y^d_h$ for each
$h=1,\ldots, \frac{m_d}{2}$ and $\alpha_i+\beta_i=2\pi$ if and
only if $y^d_h=1-x^d_h$ for each $h=1,\ldots,\frac{m_d}{2}$.
\end{lem}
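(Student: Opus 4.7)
The plan is to reduce both equivalences to the uniqueness of the binary expansion of a nonnegative integer. Writing $M:=m_d/2$ and using $\pi/2^{M-h}=2^h\cdot \pi/2^M$, I would factor both expressions as
\[
\alpha_i=\frac{\pi}{2^M}\bigl(1+2N_x\bigr),\qquad \beta_i=\frac{\pi}{2^M}\bigl(1+2N_y\bigr),
\]
where $N_x:=\sum_{h=1}^M 2^{h-1}x_h^d$ and $N_y:=\sum_{h=1}^M 2^{h-1}y_h^d$ are nonnegative integers lying in $\{0,1,\ldots,2^M-1\}$ whose binary expansions are precisely the tuples $(x_1^d,\ldots,x_M^d)$ and $(y_1^d,\ldots,y_M^d)$, respectively.

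The first equivalence then amounts to $N_x=N_y$, which by uniqueness of the binary expansion of an integer in $\{0,\ldots,2^M-1\}$ is equivalent to $x_h^d=y_h^d$ for every $h=1,\ldots,M$. For the second equivalence, clearing denominators turns $\alpha_i+\beta_i=2\pi$ into $2+2(N_x+N_y)=2^{M+1}$, i.e. $N_x+N_y=2^M-1$; since $N_x,N_y\in\{0,\ldots,2^M-1\}$, this uniquely determines $N_y$ as $2^M-1-N_x$, the bitwise complement of $N_x$ in $M$ bits, which reads off as $y_h^d=1-x_h^d$ for every $h$. Both converses are immediate by direct substitution.

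Since the statement is purely arithmetic, there is no genuine obstacle beyond keeping track of powers of two; the only point worth flagging is the range constraint $N_x,N_y\le 2^M-1$, which is exactly what allows the equation $N_x+N_y=2^M-1$ to be identified with the bitwise-complement relation and not some more complicated solution set.
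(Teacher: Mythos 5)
Your proof is correct, and it is a clean way of organizing exactly the ``direct computation'' that the paper invokes without writing out: factoring out $\pi/2^{m_d/2}$ reduces both equivalences to the uniqueness of $M$-bit binary expansions, and your observation that the range constraint $N_x,N_y\le 2^M-1$ forces the carry-free complement relation is the one point that needs saying. Nothing is missing.
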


\begin{es}\rm
Consider $\xi= 1(1100)^{\omega}$. The SEC-sequence associated with $\xi$ is $S\left(E^2(EC)\right)^{\omega}$ (so that
$k=1$, $m_0=0$ and $t_i=m_i=2$ for every $i\geq 1$). The sequence $\{a_i\}$ satisfy then
\begin{displaymath}
a_i= \left\{\begin{array}{cc}
2i-1 & \textrm{if $i$ is odd}, \\
2i-2 & \textrm{if $i$ is even}.
\end{array}\right.
\end{displaymath}
For each $i\geq 1$ one has $x^i_1=0$. Moreover, $a_{2k}-a_{2k-1}=1$ and $a_{2k+1}-a_{2k}= 3$ for each $k\geq 1$. This gives, by Proposition \ref{introductiondesangles}, $\alpha_1=\pi$, $\alpha_{2k}=\pi$ and $\alpha_{2k+1}=\frac{\pi}{2}$ for every $k\geq 1$ (see Fig. 18).

\begin{center}
\psfrag{xi}{$\xi$}\psfrag{Fig.19}{\textbf{Fig. 18.}}
\includegraphics[width=0.5\textwidth]{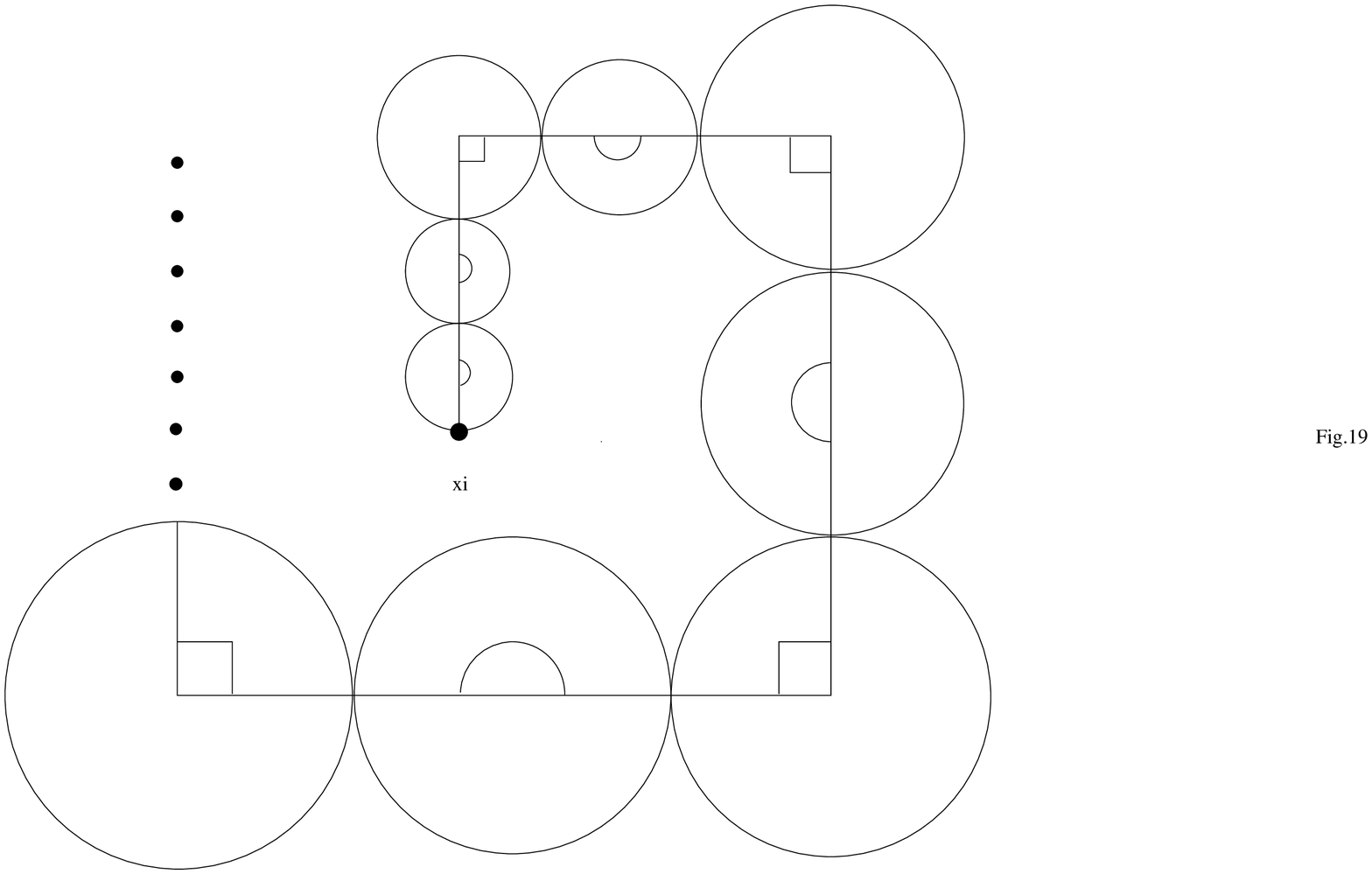}
\end{center}
\end{es}

\begin{proof}[Proof of Theorem \ref{ThmBijection}]
\emph{Part 1.} Suppose that $\xi=v1^{\omega}$ for some $v\in\{0,1\}^\ast$ and consider the associated sequence $\{a_i^\xi\}_{i\geq 1}$. If $\eta = w1^{\omega}$, $w\in\{0,1\}^\ast$, then Proposition \ref{cofinality} implies that $\Gamma_{\xi}=\Gamma_{\eta}$. If $\eta\neq w1^{\omega}$ for any $w\in\{0,1\}^\ast$, then there exist infinitely many indices $i$ such that $a_{i+1}^{\eta}-a_i^{\eta}>1$. Indeed, if $\eta\neq w1^{\omega}$, then by Proposition \ref{PropAlg}, there exist infinitely many indices $l\geq 1$ such that $m_l>0$ in the sequence $\{m_l\}_{l\geq 1}$ associated with $\eta$. By definition of the $a_i$'s (Definition \ref{definizioneveraai}), $a_{T_l+1}-a_{T_{l-1}+t_l}=m_l+1>1$.\\
\indent Hence, there are no indices $i, j$ such that $a^{\xi}_{i+n}=a^{\eta}_{j+n}$ for all $n\geq 0$. Proposition \ref{PropA_i} together with Lemma \ref{lemmaonlyoneend} imply that the graphs $\Gamma_{\xi}$ and $\Gamma_{\eta}$ are not isomorphic.\\
\indent \emph{Part 2.} Let $\xi \neq v1^{\omega}$ and $\eta\neq w1^{\omega}$ and suppose that there exist $r,s$ such that \emph{(a), (b)}, and
\emph{(c)} hold. It follows from Definition \ref{definizioneveraai}, that there exist $i,j$ such that $a_{i+n}^{\xi}=a_{j+n}^\eta$, for all $n\geq 0$. Writing $\Gamma_{\xi}\supset\mathcal{CP}_{\xi}=\{P_h\}_{h\geq 1}$ and $\Gamma_{\eta}\supset\mathcal{CP}_{\eta}=\{P'_h\}_{h\geq 1}$, let $\{v_{i}\}=P_{i}\cap P_{i+1}$ and similarly $\{v'_{j}\}=P'_{j}\cap P'_{j+1}$. Consider the decoration $X_{i}$ (respectively  $X'_{j}$) attached to $v_{i}$ (respectively  $v'_{j}$). Since $a_{i}^\xi=a_{j}^\eta$, these decorations are isomorphic via an isomorphism $\phi_0:X_{i}\longrightarrow X'_{j}$. We can extend the isomorphism $\phi_0:X_{i}\longrightarrow X'_{j}$ to an isomorphism
$\phi_1:X_{i+1}\longrightarrow X'_{j+1}$ where $X_{i+1}$ (respectively  $X'_{j+1}$) is the decoration attached to $v_{i+1}$ (respectively  $v'_{j+1}$). Clearly, $X_i\subset X_{i+1}$ (respectively  $X'_i\subset X'_{i+1}$).\\
\indent Indeed, by Proposition \ref{introductiondesangles} and Lemma \ref{angoliexplementari}, writing $\{v_{i+1}\}=P_{i+1}\cap P_{i+2}$ and
$\{v'_{j+1}\}=P'_{j+1}\cap P'_{j+2}$, one has $d_{\Gamma_{\xi}}(v_i,v_{i+1})=d_{\Gamma_{\eta}}(v'_j,v'_{j+1})$. Note that decorations of
$v_{i+1}$ and $v'_{j+1}$ are both $(a_{i+1}^{\xi}+1)$-decorations which means that they are both isomorphic to the decoration $\mathcal{D}(0^{a_{i+1}^{\xi}+1})$ of $0^{a_{i+1}^{\xi}+1}$ in $\Gamma_{a_{i+1}^{\xi}+1}$. This
decoration is encoded by the diagram $\bar{D}_{a_{i+1}^{\xi}}$ where boundary vertices  correspond to $v_{i+1}$ and $v'_{j+1}$ respectively. By Proposition \ref{PropRCR}, any two vertices situated at the
same distance from the boundary vertices of the diagram have isomorphic neighbourhoods.\\
\indent Repeating the above argument, one can construct a sequence of isomorphisms $\{\phi_n\}_{n\geq 0}$, $\phi_n:X_{i+n}\longrightarrow X'_{j+n}$, such that for every $n'>n$, the restriction of $\phi_{n'}$ to $X_{i+n}$ is equal to $\phi_n$. Thus, there is a well-defined isomorphism $\phi:\Gamma_{\xi}\longrightarrow \Gamma_{\eta}$ given by $\phi:=\lim_{n\to\infty}\phi_n$.\\
\indent Conversely, if we suppose that the conditions \emph{(a), (b)}, and \emph{(c)} are not all satisfied, then three possibilities may occur.
\begin{itemize}
\item There are no indices $i, j$ such that $a^{\xi}_{i+n}=a^{\eta}_{j+n}$ for all $n\geq 0$. By Proposition \ref{PropA_i} and Lemma \ref{lemmaonlyoneend}, the graphs $\Gamma_{\xi}$ and $\Gamma_{\eta}$ are not isomorphic.
\item There exist indices $i, j$ such that $a^{\xi}_{i+n}=a^{\eta}_{j+n}$ for all $n\geq 0$, but there are
infinitely many indices $d$ such that the condition $y^d_h=x^d_h$ or $y^d_h=1-x^d_h$ for each $h=1,\ldots, m_d/2$ is not satisfied. This implies that there are infinitely many indices $n$ such that $d(v_{i+n-1},v_{i+n})\neq
d(v_{j+n-1}',v_{j+n}')$. Indeed,
it follows from Proposition \ref{introductiondesangles}, that for any $d\geq 1$, the choice of the letters $x^d_h$, for $h=1,\ldots, \frac{m_d}{2}$, in the block $(0x^d_1\ldots 0x^d_{\frac{m_d-2}{2}})$ determines how $P_{i-1}$ is attached to $P_i$. In particular, the choices $x^d_h$ and $1-x^d_h$ for each $h=1,\ldots, \frac{m_d}{2}$ correspond to vertices $\{v_{i-1}\}=P_{i-1}\cap P_i$ and $\{v_{i-1}'\}=P_{i-1}\cap P_i$ situated at the same distance from $v_i$.\\
\indent On the other hand, an isomorphism $\phi$ between $\Gamma_{\xi}$ and $\Gamma_{\eta}$
must map $v_{i+n}$ onto $v_{j+n}'$ for each $n\geq 0$. This is a contradiction.
\item Finally, suppose that \emph{(a)} and \emph{(c)} are satisfied but not \emph{(b)}. Let $r_0,s_0>2$ such that $m_{r_0+n-1}=m'_{s_0+n-1}$ and $t_{r_0+n}=t'_{s_0+n}$ for each $n\geq 0$. We prove that, for any $i,j\geq 1$, there exists $q\geq 0$ such that $a^\xi_{i+q}\neq a^\eta_{j+q}$. Without loss of generality, we fix $i,j$ sufficiently large, so that there exist $r>r_0$, $0\leq u<t_r$, $s>s_0$ and $0\leq u'<t'_s$ such that $a^\xi_{i+q}=a^\xi_{T_{r-1}+u+1}$ and $a^\eta_{j+q}=a^\eta_{T'_{s-1}+u'+1}$.
    On the other hand, by Definition \ref{definizioneveraai}, for all $r>r_0$ and for all $0\leq u< t_r$,
    \begin{displaymath}
    a^\xi_{T_{r-1}+u+1}=k+M_{r-1}+T_{r-1}+u=k+M_{r_0-2}+T_{r_0-1}+t_{r_0}+\dots+t_{r-1}+m_{r_0-1}+\dots+m_{r-1}+u;
    \end{displaymath}

    \noindent similarly, for all $s>s_0$ and $0\leq u'< t'_s$,
    \begin{displaymath}
    a^\eta_{T'_{s-1}+u'+1}=k'+M'_{s-1}+T'_{s-1}+u'=k'+M'_{s_0-2}+T'_{s_0-1}+t'_{s_0}+\dots+t'_{s-1}+m'_{s_0-1}+\dots+m'_{s-1}+u'.
    \end{displaymath}

    \noindent Write $\delta:=k'+M'_{s_0-2}+T'_{s_0-1}-k-M_{r_0-2}-T_{r_0-1}$. If $s=r$, then $a^\eta_{T'_{s-1}+u'+1}-a^\xi_{T_{r-1}+u+1}=\delta+u'-u$ whereas if $s>r$, then $a^\eta_{T'_{s-1}+u'+1}-a^\xi_{T_{r-1}+u+1}=\delta+t'_{r}+\dots +t'_{s-1}+m'_r+\dots+m'_{s-1}+u'-u$ where $\delta\neq 0$ by hypothesis. Observe that the sequences $\{m_l\}$ and $\{t_l\}$ (and hence $\{m'_l\}$ and $\{t'_l\}$) cannot be eventually both constant. Otherwise, we may find $r_0,s_0$ such that \emph{(b)} would be satisfied which is a contradiction. Thus, since $u<t_r$, $u'<t'_s$, and $t'_s\geq 1$, $m'_{s-1}\geq 2$ for any $s>1$, it is easy to check that there exists an index $q$ such that $a^\eta_{j+q}-a^\xi_{i+q}=a^\eta_{T'_{s-1}+u'+1}-a^\xi_{T_{r-1}+u+1}\neq 0$. Hence, it follows from Proposition \ref{PropA_i} and Lemma \ref{lemmaonlyoneend}, that the graphs $\Gamma_{\xi}$ and $\Gamma_{\eta}$ are not isomorphic.

\end{itemize}
\end{proof}

\subsection{Random weak limit of $\{\Gamma_n\}_{n\geq 1}$} \label{SubsectionRandomLimit}

Given a sequence of finite connected graphs $\{\Gamma_n\}_{n\geq 1}$, one can consider it as a sequence of random rooted graphs $\{(\Gamma_n,v_n)\}_{n\geq 1}$ by choosing for each $n\geq 1$ a root $v_n\in V(\Gamma_n)$ uniformly at random. Then, one says that $(\Gamma,v)$ is the random weak limit (or distributional limit) of the sequence $\{(\Gamma_n,v_n)\}_{n\geq 1}$ as $n\to\infty$ if, for every $r>0$ and for every finite rooted graph $(H,o)$, the probability that $(H,o)$ is isomorphic to the ball in $\Gamma_n$ centered in $\xi_n$ and of radius $r$ converges to the probability that $(H,o)$ is isomorphic to the ball in $\Gamma$ centered in $\xi$ and of the same radius. This is equivalent to say that the law of $(\Gamma_n,v_n)$ weakly converges to the law of $(\Gamma,v)$ as probability measures on the space $\mathcal{X}$ of connected rooted graphs (see Subsection \ref{subsecSchreierGraphs}).\\
\indent If $\{(\Gamma_n,\xi_n)\}_{n\geq 1}$ are the Schreier graphs of a spherically transitive action of a finitely generated group of automorphisms of a regular rooted tree $T$;  and if the root $\xi_n$ is chosen uniformly at random in each $\Gamma_n$, then the random weak limit of this sequence is the uniform measure on the set $\{(\Gamma_\xi,\xi),\xi\in \partial T\}$ of rooted orbital Schreier graphs of the group action on $\partial T$. \\
\indent Observe that, since the action is spherically transitive, it is ergodic on $\partial T$. This implies that almost all infinite Schreier graphs of a given group have the same number of ends. We discuss this typical value of the number of ends in more detail in the forthcoming paper \cite{BDDN}.
Here, we show that in the case of the Basilica group, the set $\{(\Gamma_\xi,\xi),\xi\in E_1\}$ is of full measure. Moreover, we also show that if we partition $\{(\Gamma_\xi,\xi),\xi\in E_1\}$ into classes of isomorphisms of \emph{unrooted} graphs, then each isomorphism class is of measure $0$.\\
\indent So, we consider again the Basilica group $B$ acting by automorphisms on the binary tree, and we denote by $\Gamma_\xi$, $\xi\in\{0,1\}^\omega$, infinite Schreier graphs of the induced action of $B$ on the boundary of the binary tree identified with the set of infinite binary words and equipped with the uniform measure $\lambda$. Recall that $E_4$ is the set of infinite words defining Schreier graphs with four ends, $E_{2,even}$ and $E_{2,odd}$ are the sets defining Schreier graphs with two ends and, respectively, with odd or even decorations. Finally, $E_1$ denotes the set of infinite words defining Schreier graphs with one end.

\begin{prop}\label{unifmeasure} Almost every graph $\Gamma_\xi$ has one end with respect to
the uniform measure on the set $\{(\Gamma_\xi,\xi),\xi\in \{0,1\}^\omega\}$ of rooted orbital Schreier graphs of the action of the Basilica group $B$ on $\{0,1\}^\omega$.
\end{prop}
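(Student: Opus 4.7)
The plan is to prove the proposition by showing directly that $\lambda(E_1) = 1$, using the classification of $E_1$, $E_2$, $E_4$ given in Theorem \ref{totaltheorem} together with elementary measure-theoretic arguments on the Bernoulli space $(\{0,1\}^\omega, \lambda)$.

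First I would dispose of the four-ended case. By Theorem \ref{totaltheorem}, $E_4 = \{w0^\omega, w(01)^\omega : w \in X^*\}$, which is a countable subset of $\{0,1\}^\omega$. Since the uniform measure $\lambda$ assigns measure $0$ to every singleton cylinder point, we have $\lambda(E_4) = 0$.

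Next I would show that $\lambda(E_2) = 0$. Recall from Theorem \ref{totaltheorem} that
\[
E_1 = \{\alpha_1\beta_1\alpha_2\beta_2\ldots : \{\alpha_i\} \text{ and } \{\beta_j\} \text{ both contain infinitely many } 1\text{'s}\}.
\]
Under $\lambda$, the coordinates $\xi_i$ of an infinite binary sequence are i.i.d.\ Bernoulli$(1/2)$ random variables. Hence the subsequences $\{\alpha_i\}_{i\geq 1} = \{\xi_{2i-1}\}_{i\geq 1}$ and $\{\beta_j\}_{j\geq 1} = \{\xi_{2j}\}_{j \geq 1}$ are themselves i.i.d.\ Bernoulli$(1/2)$. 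By the second Borel--Cantelli lemma (or directly by the strong law of large numbers), each of the events $\{\{\alpha_i\} \text{ has infinitely many } 1\text{'s}\}$ and $\{\{\beta_j\} \text{ has infinitely many } 1\text{'s}\}$ holds $\lambda$-almost surely. Intersecting these two full-measure events gives $\lambda(E_1) = 1$, and consequently $\lambda(E_2) = \lambda(\{0,1\}^\omega \setminus (E_1 \sqcup E_4)) = 0$.

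There is no real obstacle here: the classification provided by Theorem \ref{totaltheorem} converts the geometric statement ``$\Gamma_\xi$ has one end'' into a purely combinatorial condition on the parity-subsequences of $\xi$, and this condition is manifestly generic with respect to the uniform Bernoulli measure. The content of the proposition is therefore almost entirely located in Theorem \ref{totaltheorem}, and the current statement is essentially a corollary obtained by a two-line Borel--Cantelli argument on the two parity-subsequences of $\xi$.
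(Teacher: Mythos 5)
Your proof is correct, but it takes a genuinely different route from the paper's. The paper does not touch the combinatorial description of $E_1$ at all: it shows $\lambda(E_4)=0$ by observing that $E_4$ is contained in the (countable, hence null) set of eventually periodic words, and then disposes of $E_2$ by a symmetry-plus-ergodicity argument. Namely, it splits $E_2=E_{2,even}\sqcup E_{2,odd}$, notes that these two sets have equal measure (they are exchanged by the measure-preserving shift), and invokes the ergodicity of the spherically transitive $B$-action on $(\partial T,\lambda)$: each of the four pieces $E_4, E_{2,even}, E_{2,odd}, E_1$ is $B$-invariant, so each has measure $0$ or $1$, and two disjoint invariant sets of equal measure must both be null. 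You instead prove $\lambda(E_1)=1$ head-on, using the characterization of $E_1$ from Theorem \ref{totaltheorem} and the second Borel--Cantelli lemma applied to the two parity-subsequences, which are i.i.d.\ Bernoulli$(1/2)$ under $\lambda$. Your argument is more elementary and self-contained (it needs neither ergodicity nor the even/odd splitting of $E_2$), and it even makes the treatment of $E_4$ logically redundant since $\lambda(E_1)=1$ forces $\lambda(E_2\sqcup E_4)=0$. The paper's argument, on the other hand, is softer and transfers to situations where no explicit combinatorial description of the one-ended locus is available, which is in line with the general results it alludes to in \cite{BDDN}. One small point to make explicit in your write-up: $E_1$ is measurable (a countable intersection of countable unions of cylinder sets), so the Borel--Cantelli argument applies without reservation.
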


\begin{proof} We will show that $\lambda(E_1)=1$, where $\lambda$ is the uniform measure on $\{0,1\}^\omega$.
We have proven that $\{0,1\}^\omega=E_4\sqcup E_{2,even}\sqcup E_{2,odd} \sqcup E_1$. Each part is clearly invariant under $B$.\\
\indent An infinite word $\xi\in \{0,1\}^{\omega}$ is periodic if there exist  $w,u$ in $\{0,1\}^*$ such that $\xi = wu^{\omega}$, where $w$ is possibly empty. We denote by $\{0,1\}^{\omega}_{per}$ the set of all periodic infinite binary words. One checks easily that $\lambda(\{0,1\}^{\omega}_{per})=0$; indeed, $\{0,1\}^{\omega}_{per}$ can be written as the union $\bigcup_{k=0}^{\infty}Y_k$ where $Y_k:=\bigcup_{n=1}^{\infty}Y_{k,n}$ with $Y_{k,n}:=\{wu^{\omega} \ | \ |w|=k, |u|=n\}$. Clearly, $\lambda(Y_{k,n})=0$, since this set contains $2^{k+n}$ words.\\
\indent We have that $E_4\subset \{0,1\}^{\omega}_{per}$ and so $\lambda(E_4)=0$. It is clear that $\lambda(E_{2,even})=\lambda(E_{2,odd})$, and so, by ergodicity, $\lambda(E_{2,even})=\lambda(E_{2,odd})=0$. It follows that $\lambda(E_1)=1$.
\end{proof}

Given $\xi\in\{0,1\}^\omega$, write

$$
I_{\xi} := \{\eta\in \{0,1\}^\omega \ | \ \Gamma_{\xi} \textrm{ and }
\Gamma_{\eta} \ \textrm{are isomorphic as unrooted graphs}\}.
$$

\begin{prop}\label{classemisuranulla}
For any $\xi\in \{0,1\}^\omega$, $\lambda(I_{\xi})=0$.
\end{prop}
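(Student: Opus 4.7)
The plan is to stratify by the asymptotic geometry of $\Gamma_\xi$ and reduce to a product estimate. When $\xi\in E_4\cup E_{2,even}\cup E_{2,odd}$, Theorems \ref{4isomorfismo}, \ref{xixibarra} and \ref{2isomorfismo} give $I_\xi\subset E_4\cup E_{2,even}\cup E_{2,odd}$, which has measure zero by Proposition \ref{unifmeasure}. When $\xi=w1^\omega$ for some $w\in\{0,1\}^\ast$, Theorem \ref{ThmBijection}(1) yields $I_\xi\subset\{u1^\omega:u\in\{0,1\}^\ast\}$, a countable set. In both cases $\lambda(I_\xi)=0$ is immediate. The nontrivial case is therefore $\xi\in E_1$ with $\xi\neq w1^\omega$, carrying a canonical triple $(k,\{m_l\},\{t_l\})$ from Proposition \ref{PropAlg}.

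For this case, I would apply Theorem \ref{ThmBijection}(2) to write
$$I_\xi=\bigcup_{r,s\ge 1}I^{r,s}_\xi,$$
where $I^{r,s}_\xi$ denotes the set of those $\eta$ whose canonical triple $(k',\{m'_l\},\{t'_l\})$ satisfies conditions \emph{(a), (b), (c)} of that theorem with the given $r,s$. By countable subadditivity, it suffices to prove $\lambda(I^{r,s}_\xi)=0$ for each pair $(r,s)$. Condition \emph{(b)} forces the length of the canonical prefix $0^{k'-1}1(0y^0_1\cdots)\cdots 1^{t'_s}$ of $\eta$ to equal $L:=k+T_r+M_{r-1}$, a number depending only on $\xi$ and $r$; hence this prefix is one of at most $2^L$ binary words. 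I would condition on any such prefix and estimate the conditional measure of the tail.

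By conditions \emph{(a)} and \emph{(c)}, the bits of $\eta$ at positions strictly greater than $L$ decompose into consecutive blocks indexed by $n\ge 0$, where block $n$ occupies $m_{r+n}+t_{r+n+1}$ positions and further splits into $m_{r+n}/2$ positions forced to $0$, $t_{r+n+1}$ separator positions forced to $1$, and $m_{r+n}/2$ positions whose joint value is constrained to one of just two vectors, namely $(x^{r+n}_1,\ldots,x^{r+n}_{m_{r+n}/2})$ or its bitwise complement. Since distinct blocks occupy disjoint bit positions, the corresponding events are independent under $\lambda$, and the probability that block $n$ matches equals
$$2^{-m_{r+n}/2}\cdot 2^{-t_{r+n+1}}\cdot 2\cdot 2^{-m_{r+n}/2}=2^{1-m_{r+n}-t_{r+n+1}}\le 2^{-2},$$
using $m_{r+n}\ge 2$ and $t_{r+n+1}\ge 1$ (Proposition \ref{PropAlg}). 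The product over $n\ge 0$ is therefore $0$; summing the resulting zero contributions over the at most $2^L$ valid prefixes gives $\lambda(I^{r,s}_\xi)=0$, whence $\lambda(I_\xi)=0$ by countable union.

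The main obstacle will be the combinatorial bookkeeping: verifying that forced and constrained bit positions in different blocks are genuinely disjoint, and that the canonical form parses as described so that independence under the Bernoulli measure $\lambda$ really applies. Once that is set up, the measure estimate reduces to the clean uniform bound $m_l+t_l\ge 3$ for $l\ge 1$, which drives the per-block exponents to sum to $-\infty$ and kills the product.
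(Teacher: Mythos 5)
Your proof is correct and follows essentially the same strategy as the paper's: both reduce, via Theorem \ref{ThmBijection}, to a countable union of sets of words whose canonical form is prescribed from some point on, and then observe that each such set is null because infinitely many tail bits are forced. The only difference is in how that last step is verified: the paper computes the measure of the set of words with a given skeleton exactly, as a telescoping series summing to zero, whereas you use independence of the disjoint bit-blocks and the uniform per-block bound $2^{1-m_{r+n}-t_{r+n+1}}\le 1/4$ (additionally exploiting condition \emph{(c)}, which is not actually needed since the forced $0$'s and $1$'s of the skeleton already suffice) --- a cleaner but equivalent route.
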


\begin{proof}
If $\xi\in E_4\sqcup E_2$, then the statement is obvious by the previous proposition. If $\xi\in E_1$, let $(k,\{m_l\},\{t_l\})$  be the triple provided by Proposition \ref{PropAlg}. Denote by $W_{z,\{t_l\},\{m_l\}}\subset E_1$ the set of words with triples satisfying $k+m_0=z$.
We show that $\lambda(W_{z,\{t_l\},\{m_l\}})=0$. By Proposition \ref{PropAlg}, $W_{z,\{t_l\},\{m_l\}}$ is constituted of words of the form
$$
0^{k-1}1(0x^0_1\ldots 0x^0_{m_0/2})1^{t_1}(0x_1^1\ldots
0x_{m_1/2}^1)\ldots 1^{t_l}(0x_1^l\ldots
0x_{m_l/2}^l)\ldots,
$$
with $x_i^j\in \{0,1\}$. From this, one has
\begin{eqnarray*}
\lambda(W_{z,\{t_l\},\{m_l\}})&=& 1-
\sum_{i=1}^k\frac{1}{2^i}-\sum_{i=1}^{\frac{m_0}{2}}\frac{2^{i-1}}{2^{k+2i-1}}-\sum_{i=1}^{t_1}\frac{2^{\frac{m_0}{2}}}{2^{k+m_0+i}}-
\sum_{i=1}^{\frac{m_1}{2}}\frac{2^{\frac{m_0}{2}+i-1}}{2^{k+m_0+t_1+2i-1}} \\
&-&
\sum_{i=2}^{\infty}\left(\sum_{j=1}^{t_i}\frac{2^{\frac{M_{i-1}}{2}}}{2^{T_{i-1}+M_{i-1}+k+j}}+
\sum_{h=1}^{\frac{m_i}{2}}\frac{2^{\frac{M_{i-1}}{2}+h-1}}{2^{T_i+M_{i-1}+k+2h-1}}\right).
\end{eqnarray*}
Setting $s_i:=T_i+M_i/2+k$, we get
\begin{align} \label{eqLaast}
\lambda(W_{z,\{t_l\},\{m_l\}})&=
\sum_{i=1}^k\frac{1}{2^i}-\sum_{i=1}^{\frac{m_0}{2}}\frac{1}{2^{k+i}}-\sum_{i=1}^{t_1}\frac{1}{2^{k+\frac{m_0}{2}+i}}-
\sum_{i=1}^{\frac{m_1}{2}}\frac{1}{2^{k+\frac{m_0}{2}+t_1+i}}-
\sum_{i=2}^{\infty}\left(\sum_{j=s_{i-1}+1}^{s_i}\frac{1}{2^j}\right)\\
&=1-\sum_{i=1}^{\infty}\frac{1}{2^i} =  0.
\end{align}
For any $i\geq 1$, consider the set $I_i^{\xi}\subset E_1$ defined as follows: $\eta\in I_i^{\xi}$ if and only if $i$ is the smallest integer such that the triple $(k',\{m'_l\},\{t'_l\})$ associated with $\eta$ satisfies $m'_{i+k}=m_{j+k}$, $t'_{i+k+1}=t_{j+k+1}$ and $z'+T_{i}'+M_{i-1}'=z+T_{j}+M_{j-1}$ for some $j\geq 1$ and each $k\geq 0$. By Theorem \ref{ThmBijection}, $I_{\xi}\subset \bigcup_{i=1}^{\infty}I_i^{\xi}$. Since for any $\xi\in\{0,1\}^\omega$, $\lambda(\{w\xi \ | \ w\in \{0,1\}^i\})=0$, (\ref{eqLaast}) implies that $\lambda(I_i^\xi)=0$ for every $i\geq 1$.
\end{proof}

\noindent {\bf Acknowledgments.} We are grateful to Volodymyr
Nekrashevych for pointing out a mistake in a preliminary version
of this paper.

\end{document}